\def\bbordermatrix#1{\begingroup \m@th
  \@tempdima 4.75\p@
  \setbox\z@\vbox{%
    \def\cr{\crcr\noalign{\kern2\p@\global\let\cr\endline}}%
    \ialign{$##$\hfil\kern2\p@\kern\@tempdima&\thinspace\hfil$##$\hfil
      &&\quad\hfil$##$\hfil\crcr
      \omit\strut\hfil\crcr\noalign{\kern-\baselineskip}%
      #1\crcr\omit\strut\cr}}%
  \setbox\tw@\vbox{\unvcopy\z@\global\setbox\@ne\lastbox}%
  \setbox\tw@\hbox{\unhbox\@ne\unskip\global\setbox\@ne\lastbox}%
  \setbox\tw@\hbox{$\kern\wd\@ne\kern-\@tempdima\left[\kern-\wd\@ne
    \global\setbox\@ne\vbox{\box\@ne\kern2\p@}%
    \vcenter{\kern-\ht\@ne\unvbox\z@\kern-\baselineskip}\,\right]$}%
  \null\;\vbox{\kern\ht\@ne\box\tw@}\endgroup}
\def\VR{\kern-\arraycolsep\strut\vrule &\kern-\arraycolsep}
\def\vr{\kern-\arraycolsep & \kern-\arraycolsep}
\theoremstyle{plain}
\newtheorem{theorem}[subsubsection]{Theorem}
\newtheorem{proposition}[subsubsection]{Proposition}
\newtheorem{lemma}[subsubsection]{Lemma}
\newtheorem{corollary}[subsubsection]{Corollary}
\theoremstyle{definition}
\newtheorem{definition}[subsubsection]{Definition}
\newtheorem{example}[subsubsection]{Example}
\newtheorem{conjecture}[subsubsection]{Conjecture}
\newtheorem{remark}[subsubsection]{Remark}
\newcommand{\fermat}{x^{q+1} + y^{q+1} + z^{q+1} + w^{q+1}}
\DeclareMathOperator{\GL}{GL}
\DeclareMathOperator{\PGL}{PGL}
\DeclareMathOperator{\U}{U}
\DeclareMathOperator{\PU}{PU}
\DeclareMathOperator{\Aut}{Aut}
\DeclareMathOperator{\PG}{PG}
\date{\today}
\title{Geometry of Smooth Extremal Surfaces}
\author{Anna Brosowsky, Janet Page, Tim Ryan and Karen E. Smith}
\email{ kesmith@umich.edu}
\thanks{Anna Brosowsky was partially funded by NSF DMS \#1840234 and  \#2101075, 
Karen Smith was partially funded  by NSF DMS \#1801697 and \#2101075.}
\begin{document}

\abstract
We study the geometry of the smooth projective surfaces that are defined by Frobenius forms, a class of homogenous polynomials  in prime characteristic recently shown to have minimal possible F-pure threshold among forms of the same degree.
We call these surfaces {\it  extremal surfaces}, and show that their geometry is reminiscent of the geometry of smooth cubic surfaces, especially non-Frobenius split cubic surfaces of characteristic two, which are examples of extremal surfaces. For example, we show that an extremal surface $X$ contains $d^2(d^2-3d+3)$ lines where $d$ is the degree, which is notable since the number of lines on a complex surface is bounded above by a quadratic function in $d$. Whenever two of those lines meet, they determine a $d$-tangent plane to $X$ which consists of a union of $d$ lines meeting in one point; we count the precise number of such "star  points" on $X$, showing that it is quintic in the degree,  which recovers the fact that there are exactly 45 Eckardt points on an extremal cubic surface.
 Finally, we generalize the classical notion of a double six for cubic surfaces to a double $2d$ on an extremal surface of degree $d$. We show that, asymptotically in $d$,  smooth extremal surfaces have at least $\frac{1}{16}d^{14}$ double $2d$'s. A key element of the proofs is using the large automorphism group of extremal surfaces which we show acts transitively on many sets, such as the set of (triples of skew) lines on the extremal surface.
Extremal surfaces are closely related to finite Hermitian geometries, which we recover as the 
$\mathbb F_{q^2}$-rational  points of special extremal surfaces defined by Hermitian forms over $\mathbb F_{q^2}$.}

\maketitle


\section{Introduction}

Let $k$ be an algebraically closed field of positive characteristic $p$. Our goal is to study the geometry of smooth {extremal surfaces}  over $k$.
 
 An {\bf extremal surface} is a  surface in $\mathbb P^3$ defined by a homogenous polynomial of {\it smallest possible F-pure-threshold} among reduced forms of the same degree. 
While not obvious such  forms exist,  a sharp lower bound on the F-pure threshold in terms of degree was proved in \cite[1.1]{extremal}, where  the forms achieving it were classified and dubbed {\bf Frobenius forms}.    
 The F-pure threshold  is a measurement of singularities\footnote{{The F-pure threshold was first 
 defined as a "characteristic $p$ analog" of the log canonical threshold, a well-known invariant of complex singularities, 
 by Takagi and Watanabe  \cite{takagi+watanabe.F-pure_thresholds}, who were building on the work of Hara and Yoshida \cite{HaraYoshida}. See also      \cite{mustata+takagi+watanabe.F-thresholds}, \cite{blickle+mustata+smith.discr_rat_FPTs} or \cite{benito+faber+smith.measuring_sing_with_frob}.}
}
 with smaller thresholds representing "worse singularities," so forms with minimal F-pure threshold cut out 
 "maximally singular" cones in affine space. Thus it is natural to expect the corresponding projective hypersurfaces to exhibit some extremal geometric properties as well.

  The simplest case of an extremal surface is a non-Frobenius split cubic surface of characteristic two, which were studied in depth in \cite{cubics}.  Geometrically, extremal cubic surfaces can be characterized among all cubic surfaces as those that  {\it  admit no triangles}. 
 To understand this statement, recall that each smooth cubic surface admits exactly forty-five plane sections consisting of a union of three lines,  typically forming a "triangle". Some special cubic surfaces  admit one or more such { tri-tangent sections} in which the three lines meet at some point (called an Eckardt point).  An extremal cubic surface has the highly unusual property that {\it each and every one} of the forty-five  tri-tangent plane sections  consists of three concurrent lines. Such "triangle-free" cubic surfaces do not exist over $\mathbb C$ nor indeed over any field of odd characteristic. Extremal cubic surfaces exist only  in characteristic two, precisely when the cubic form cutting out the surface is {a Frobenius form.}  These results are all worked out in \cite{cubics}; see also \cite{extremal},  \cite{dolgachev-duncan.automorphisms}, \cite[5.5]{hara.rational-singularities}, \cite[1.1]{homma} and \cite[20.2]{Hirschfeld} for related work.
 
This paper explores ways in which smooth projective surfaces defined by Frobenius forms have "extremal" geometric features analogous to the abundance of concurrent configurations of  lines on a smooth  extremal cubic surface. 
  Like extremal cubics, higher degree extremal surfaces contain no triangles: if two lines on a smooth extremal surface  $X$ meet at some point $p$, then the tangent plane section $T_pX\cap X$  at $p$ consists of $q+1$  distinct lines meeting at $p$ (Corollary~\ref{noTriangle}). We call such a configuration of lines on an extremal surface a {\bf star} and the point of concurrency a {\bf star point};  note that a star point is precisely an {Eckardt point} in the case of cubic surfaces. 
     
   We prove that smooth extremal surfaces have a  large number of
stars---indeed the number of stars grows like $d^5$ where $d$ is the degree of the surface---which means that extremal surfaces have a large number of lines---exactly $d^2(d^2-3d+3)$ to be precise (see Corollary~\ref{count}). This  contrasts  sharply with the characteristic zero case, where  the number of lines on a smooth  surface in $\mathbb P^3$ is bounded above by a quadratic function in the degree; see \cite{Segre.43, Rams+Schutt.15-64lines} or \cite{Boissiere+Sarti}. Bauer and Rams recently showed that a quadratic bound holds even in characteristic $p$, provided $p>d$ \cite{Bauer+Rams}. 
On the other hand,  the quadratic bound has been known to be false in non-zero characteristic (see {\it e.g.} \cite{Rams+Schutt.15-112lines}).
 Corollary~\ref{count} confirms that it is wildly false in every characteristic, even for 
  $d=p+1$.
   
The main theme of this paper is that extremal surfaces exhibit fascinating geometry  reminiscent of the geometry of lines on cubic surfaces. Most substantially, in Section~6, we  generalize the classical notion of a "Double Six" on a cubic surface, showing that an extremal surface of degree $d$  admits many configurations of  "Double 2d"'s (Theorem~\ref{Quadric2}). 
 To construct and establish the abundance of such Double 2d's, we use pairs of {\it quadric configurations,} a concept investigated in Section 5.  A quadric configuration is a collection  of $2d$  lines on a surface of degree $d$ all lying on the same quadric. While most surfaces do not contain any quadric configuration, we show that, like cubic surfaces, a degree $d$ extremal surface contains many quadric configurations---roughly $\frac{d^9}{2}$ for large $d$ (Corollary~\ref{QuadricCount}).
In Conjecture~\ref{DoubleConj}, we speculate   that, as  is classically known for  cubic surfaces, every double $2d$ on an extremal surface is a union of two quadric configurations. In Theorem~\ref{ProgressConj} we
   prove this conjecture for $d>10$ and $d<5$.  
   
  A key tool used throughout is that  the symmetry group of an extremal surface is quite large. Indeed, we prove it acts   transitively 
 on several large  sets, including  the set of all stars
  (Proposition~\ref{transStars}) 
and the set of all  ordered triples of skew lines on $X$   (Theorem~\ref{TripleTrans}). We find an explicit description of the automorphism group of an extremal surface (Theorem~\ref{autogroup}) that recovers a theorem  of Shioda on the automorphism group of certain Fermat hypersurfaces, and of Duncan and Dolgachev for  cubics "with no canonical point" (see Remark~\ref{CompareShioda}).

Extremal varieties are closely connected to finite Hermitian geometry, although our approach is completely independent (see \cite{BC, Hirschfeld}). Indeed, a {\it Hermitian form} is a (very) special type of Frobenius form defined over $\mathbb F_{q^2}$; see \S~\ref{Herm}.
For this reason, several  basic facts established in Sections 3 and 4 will  sound familiar to experts in finite geometry where Hermitian forms over $\mathbb F_{q^2}$ play a starring role. For example, our counts of star points and lines on an extremal  surface (Corollary~\ref{count}) produce well-known numbers of points and lines in a Hermitian sub-geometry  of the finite projective geometry $\PG(3, q^2)$.  
Proposition~\ref{HermStar} explains why: we show that
{\it if the extremal variety happens to be defined by a Hermitian form over} $\mathbb F_{q^2}$, then
 its  star points are simply its $\mathbb F_{q^2}$-points. Our paper is independent of  the vast  theory of finite geometries ({\it e.g.} \cite{Hirschfeld}), and uses only standard algebraic geometry over an algebraically closed field  as one might find in a text such as Shafarevich \cite{Shafarevich}; indeed, we discovered the connection with finite geometry only after our work was complete. None-the-less, we include self-contained proofs of  some results, such as the structure of the automorphism group of an extremal variety (Theorem~\ref{autogroup}), which could, {\it a postieri}, be deduced from well-known results in finite geometry.

 Our work
 connects extremal varieties to a diverse array of active research groups throughout pure and applied mathematics including  in coding and design theory \cite{Goppa, Etzion+Storme, Tsfasman+Vladut+Zink},  
 rational points on curves and varieties \cite{HommaKim, Hirschfeld+Korchmaros+Torres}, graph theory \cite{Faina+Korchmaros}, cryptology \cite{Klein+Storme},  group theory \cite{Grove, Tits.76}, and the combinatorics of hyperspace arrangements and generalized quadrangles \cite{Payne+Thas.09}.  Nearly all this research  is written from a dramatically different perspective from our paper. We hope to inspire algebraic geometers to investigate some of the many open problems, for example, in  \cite{Hirschfeld+Thas.15}, and conversely, help researchers in diverse fields gain  access to new techniques. A small sample of related literature includes
 \cite{Segre.67}, \cite{Hirschfeld+Thas.16},
    \cite{Tsfasman+Vladut+Nogin}, \cite{Tits.79},  \cite{VanMaldeghem}, \cite[\S~35]{Kollar15}, and the references therein. 
    
    \noindent
    {\bf Acknowledgements.} This work is an offshoot of a project begun at a Banff workshop called {\it Women in Commutative Algebra}, which produced the papers \cite{cubics}, \cite{WICA},  and \cite{extremal}.  We would like to acknowledge the valuable discussions with the other participants in those earlier projects: Elo\'isa Grifo, Zhibek Kadyrsizova, Jennifer Kenkel, Jyoti Singh, Adela Vraciu, and Emily Witt. We are also grateful to J\'anos Koll\'ar for suggesting the connection with Hermitian geometry.
  \section{Basics of Frobenius forms}\label{basics}
  
This section consolidates   needed known facts and terminology  about Frobenius forms.

Fix a field $k$ of positive characteristic $p$, and let $q$ denote  $p^e$ for   some fixed   positive integer  $e$. 
A Frobenius form (in $n$ variables, say) is a homogeneous polynomial of degree $p^e+1$  in the "Frobenius power" $\langle x_1^{p^e}, x_2^{p^e}, \dots, x_n^{p^e}\rangle$ of the unique homogenous maximal ideal of the polynomial ring.  Put differently, a Frobenius form is a polynomial  $h$ that can be written  
 $ \sum_{i=1}^n x_i^{q} L_i$, where  $L_i$ are linear forms. In particular, every Frobenius form admits a matrix factorization  
\begin{equation}\label{matrix}
 h = \begin{bmatrix}   x_1^{q} &  x_2^{q}  & \hdots & x_n^{q}\end{bmatrix}
 A \begin{bmatrix}  x_1 \\ x_2 \\ \vdots  \\ x_n \end{bmatrix} =  ( \vec{x}^{[q]})^{\top} A \,  \vec{x},
 \end{equation}
  where $A$ is the  unique  $n\times n$ matrix whose $i$-th row is made up of the coefficients of the linear form $L_i$.  Here,  for a matrix $B$ of any size,  the notation $B^{[q]}$ denotes the matrix obtained by raising all entries to the $p^e$-th power,  and $B^\top$ denotes the transpose of $B$. The notation  $\vec x$ denotes a column vector of   size   $n$.

 \subsection {Changes of Coordinates} \label{coordinateChange}  The set of Frobenius form is preserved by  arbitrary linear changes of coordinates, since both degree and the ideal $\langle x_1^{q}, x_2^q, \dots, x_n^q\rangle$ are preserved.
  If $g$ is an invertible $n\times n$ matrix representing some linear change of coordinates, 
   and the Frobenius form $F$ is represented by the matrix $A$, then  the Frobenius form $g^*F$ obtained after changing coordinates is 
  represented by the matrix   \begin{equation}\label{action}
  \left[g^{[p^e]}\right]^{\top}  A g.
  \end{equation}
   See 
  \cite[\S~5]{extremal} for details.

A Frobenius form is said to be  {\bf non-degenerate} if it cannot be written as a polynomial in fewer variables after any linear change of coordinates.

  The {\bf rank} of a Frobenius form is the rank of the representing matrix. The rank is the same as the codimension of the singular locus of the corresponding hypersurface \cite[5.3]{extremal}.

\begin{theorem}\label{smooth} \cite[6.1]{extremal}\cite{beauville}.
All maximal rank Frobenius forms of fixed degree and number of variables over a fixed algebraically closed field $k$ are the same up to linear change of variables.
\end{theorem}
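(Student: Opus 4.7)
My plan is to reformulate the theorem via the matrix correspondence (\ref{matrix}): maximal rank Frobenius forms of degree $q+1$ in $n$ variables are parameterised bijectively by $\GL_n(k)$, and (\ref{action}) translates linear changes of coordinates into the ``twisted congruence'' action
\[
g \cdot A \;=\; \bigl[g^{[q]}\bigr]^{\!\top}\! A\, g
\]
of $\GL_n(k)$ on $\GL_n(k)$. The theorem then asserts that this action is transitive; since the Fermat form $\sum x_i^{q+1}$ corresponds to $A=I$, transitivity yields the stronger fact that every maximal rank Frobenius form is equivalent to the Fermat form.

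The strategy is to show that every orbit is open in $\GL_n(k)$. Since $\GL_n(k)$ is an open subvariety of $\mathbb{A}^{n^2}$, it is irreducible, hence connected, so any partition of $\GL_n(k)$ into nonempty open subsets is trivial. For $A\in\GL_n(k)$, let $\mu_A\colon \GL_n(k)\to \GL_n(k)$ be the orbit map $g\mapsto [g^{[q]}]^{\!\top}\! A\, g$; its image is the orbit of $A$, and openness of the image will follow once I check that $d\mu_A$ is an isomorphism at every point.

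The decisive observation is that the Frobenius-twist $g\mapsto g^{[q]}$ has vanishing differential. Working over the dual numbers $k[\epsilon]/(\epsilon^2)$, the Freshman's Dream gives $(a+\epsilon x)^q = a^q + \epsilon^q x^q = a^q$ for any entry $a+\epsilon x$, since $q\ge 2$. Hence for $g_0\in \GL_n(k)$ and $X\in T_{g_0}\GL_n(k)\cong M_n(k)$, one has $(g_0+\epsilon X)^{[q]}\equiv g_0^{[q]}\pmod{\epsilon^2}$, and so
\[
\mu_A(g_0+\epsilon X) \;=\; \bigl[g_0^{[q]}\bigr]^{\!\top}\! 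A\, g_0 \;+\; \epsilon\,\bigl[g_0^{[q]}\bigr]^{\!\top}\! A\, X .
\]
Therefore $d\mu_A|_{g_0}$ is left-multiplication by the invertible matrix $[g_0^{[q]}]^{\!\top}\! A$, which is an isomorphism. So $\mu_A$ is étale, its image is open in $\GL_n(k)$, and transitivity follows by the connectedness argument above.

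I do not foresee any serious obstacle: the Frobenius-differential calculation is the whole content of the proof, and everything else is standard algebraic geometry (a smooth morphism of smooth varieties of equal dimension has open image, and $\GL_n(k)$ is irreducible). It is also instructive that this argument uses positive characteristic in an essential way, via the vanishing of the differential of the $q$-th power map; no analogue holds in characteristic zero, consistent with the fact that congruence of matrices there has many orbits.
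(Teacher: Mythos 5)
Your argument is correct. Note first that the paper does not actually prove Theorem~\ref{smooth}: it is quoted from \cite[6.1]{extremal} and \cite{beauville}, where the classification is obtained by explicitly reducing the representing matrix $A$ to a normal form under the twisted congruence action (a Gram--Schmidt-style elimination adapted to the relation $g\cdot A=[g^{[q]}]^{\top}Ag$, which is also how nondegenerate Hermitian forms over finite fields are classically handled). Your route is genuinely different and, to my mind, slicker: you observe that the Frobenius twist kills the differential, so the orbit map $\mu_A$ is étale, every orbit is open, and connectedness of $\GL_n(k)$ forces a single orbit. All the steps check out: the monomials $x_i^qx_j$ are distinct (since $q>1$), so the parametrization by $A\in\GL_n(k)$ is bijective; $\det(g^{[q]})=(\det g)^q\neq 0$, so $\mu_A$ really maps $\GL_n(k)$ into $\GL_n(k)$; the dual-numbers computation of $d\mu_A$ is right; and a morphism of smooth $k$-varieties whose differential is bijective at every closed point is étale, hence open. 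What the explicit reduction buys in exchange is effectivity (an actual change of coordinates, used elsewhere in \cite{extremal} to classify the degenerate and lower-rank forms by partitions, which your open-orbit argument does not address since those orbits are not open); what your argument buys is brevity, a conceptual explanation of why the statement is special to characteristic $p$ (the vanishing of $d(g\mapsto g^{[q]})$ has no characteristic-zero analogue), and, as a byproduct, the fact that the stabilizer of $A$ --- the unitary group of Theorem~\ref{autogroup} --- is finite and reduced.
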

Theorem~\ref{smooth} says there is a unique {\it smooth\/} extremal hypersurface of each dimension and allowable degree. 
In particular, a {\it smooth extremal surface}  in $\mathbb P^3$  can be assumed, after suitable choice of projective coordinates, to be defined by $x^{q+1}+y^{q+1}+z^{q+1}+w^{q+1}$ or, equivalently, by  $x^{q}w+  w^qx + y^{q}z  + z^{q}y$ or any full rank Frobenius form in $x, y, z, w$.

More generally, 
 Frobenius forms are classified up to linear changes of coordinates over an algebraically closed 
field. For each $n$, the number of distinct projective equivalence classes of Frobenius forms of a fixed degree $q+1$ 
 is equal to the number of partitions of $n$. See \cite[7.1]{extremal} for the precise statement.

\begin{example}\label{3var} There are three equivalence classes of  non-degenerate Frobenius forms in three variables and of degree $q+1$, corresponding, respectively, to the three matrices 
$$
\left[
\arraycolsep=3.3pt
\begin{array}{ccc}
\cellcolor{blue!15}1 & 0 & 0 \\ 
0 & \cellcolor{blue!15}1 & 0 \\ 
0 & 0 & \cellcolor{blue!15}1 
\end{array}
\right],
\,\,\,\,\,\,\,\,
\left[
\arraycolsep=3.3pt
\begin{array}{ccc}
\cellcolor{blue!15} 1 & 
0 & 
0 \\ 
0 & 
\cellcolor{blue!15} 0 &
\cellcolor{blue!15} 1 \\ 
0 & 
\cellcolor{blue!15} 0 & 
\cellcolor{blue!15} 0 
\end{array}
\right]
\,\,\,\,\,\,{\text{and}} \,\,\,\,\,\,\,\,
\left[
\arraycolsep=3.3pt
\begin{array}{>{\columncolor{blue!15}} c >{\columncolor{blue!15}} c >{\columncolor{blue!15}} c}
0 & 1 & 0 \\ 
0 & 0 & 1 \\ 
0 & 0 & 0 
\end{array}
\right].
$$

\noindent
 These determine, respectively, the forms $x^{q+1} + y^{q+1} + z^{q+1}, \,\, x^{q+1} + y^qz\, $ and  $\,x^qy+ y^qz$.  
\end{example}

\subsection{Stars}  Smooth extremal surfaces have distinguished points with special geometry analogous to the Eckardt points on a cubic surface:

\begin{definition}\label{starDef}
 A {\bf star} on  a smooth surface $X$ of degree $d$ is a  configuration of $d$ lines on $X$, all meeting at one point $p$ called the {\bf center} of the star, or a {\bf star point.}
 \end{definition}

 If $L$ is a line on a smooth surface $X$ and $p$ is a point on $L$, then $L\subset T_pX$, the tangent plane to $X$ at $p$. Thus the $d$ lines forming a star on $X$ are coplanar---all lie in $T_pX$ where $p$ is the center of the star. In this case, the plane section $T_p X \cap X$ is the reduced union of the $d$ lines of the star.  A plane containing a star is called a {\bf star plane.}
 Star planes are uniquely determined by their centers and vice versa, since  each star plane is the tangent plane  to $X$ at the center of its star. Stars are defined and studied for higher dimensional hypersurfaces in \cite{Cools+Coppens}.

\begin{example}\label{NotFermat} Consider the extremal surface $X$ defined by $x^{q}w+ w^qx +  y^{q+1}+z^{q+1}$.
Intersecting with the plane $H$  defined by  $w$, we see a star $X\cap H$ consisting of  $q+1$ distinct lines 
$$\{\mathbb V(w, y-\nu z)\; | \;  \nu^{q+1} = -1\},
$$ all  intersecting in the star point
 $p = [1:0:0:0]$.  Thus $H$ is a star plane with center $p$.
\end{example}

\begin{remark}\label{StarLineSymmetry}  The lines in Example~\ref{NotFermat}  are indistinguishable up to projective transformation. Indeed,  the projective linear changes of coordinates $[x:y:z:w]\mapsto [x: y: \mu z:w]$ (for each $\mu \in \mu_{q+1}$) stabilizes  the surface $X$ and its star plane $H$ while  transitively permuting around the lines  in $H$.
\end{remark}


\subsection{Plane Sections of Extremal Surfaces}

 \begin{proposition}\label{planeSection} \cite{cubics}
 A plane section of a smooth extremal surface is one of the following types of divisors, all defined by Frobenius forms:
 \begin{enumerate}
 \item A smooth extremal curve.
  \item A singular extremal curve with an isolated  cuspidal singularity.
 \item The reduced sum of a line and an irreducible curve  tangent at  one point.
 \item A {\bf star} of lines on the surface.
  \end{enumerate}
  In particular, the plane section $H\cap X$ is a star if and only if  the Frobenius form  $\overline{F}$ defining $X\cap H$ in $H$  is  degenerate---that is, if and only if  $\overline{F}$ can be written  as a Frobenius form in two (of three) homogeneous coordinates for  the projective plane $H$.
 \end{proposition}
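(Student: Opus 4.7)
\emph{Plan.} The strategy is to restrict the defining Frobenius form $F$ of $X$ to $H$, verify that the restriction is itself a Frobenius form in three variables, and then apply the classification of Frobenius forms from Example~\ref{3var} to enumerate the possibilities. Concretely, write $F=(\vec{x}^{[q]})^{\top}A\vec{x}$ with $A$ invertible (since $X$ is smooth) and parametrize $H$ as $\vec{x}=M\vec{y}$ for a $4\times 3$ matrix $M$ of rank $3$. Substitution gives $\overline{F}(\vec{y})=(\vec{y}^{[q]})^{\top}B\vec{y}$ with $B:=M^{[q]\top}AM$, so $\overline{F}$ is a Frobenius form in three variables. The standard rank-of-a-product inequality then yields
$$\mathrm{rank}(B)\;\geq\;\mathrm{rank}(M^{[q]\top})+\mathrm{rank}(AM)-4\;=\;2.$$
This bound is the crucial ingredient: it excludes the rank-$\leq 1$ Frobenius forms (like $x^{q+1}$ or $x^{q}y$), which would correspond to non-reduced plane sections.

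Combining this rank bound with the classification in Example~\ref{3var}, and noting that a degenerate rank-$\geq 2$ Frobenius form in three variables is equivalent to the binary form $x^{q+1}+y^{q+1}$, leaves exactly four projective equivalence classes for $\overline{F}$, and I would verify the geometric description of each. The non-degenerate rank-$3$ representative $x^{q+1}+y^{q+1}+z^{q+1}$ is smooth by Theorem~\ref{smooth} applied to $n=3$, giving (1). For $x^{q+1}+y^{q}z$, computing partials (using $q\equiv 0\pmod p$) gives singular locus $\{[0{:}0{:}1]\}$; in the chart $z=1$ the local equation $x^{q+1}+y^{q}=0$ has tangent cone $V(y^{q})$ and admits the parametrization $s\mapsto(s^{q},-s^{q+1})$, exhibiting an isolated unibranch cusp, giving (2). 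The factorization $x^{q}y+y^{q}z=y(x^{q}+y^{q-1}z)$ expresses the section as the line $V(y)$ together with the degree-$q$ curve $V(x^{q}+y^{q-1}z)$, which is irreducible since it is linear in $z$; the two components meet only at $[0{:}0{:}1]$, where the tangent cone $V(y^{q-1})$ of the second component coincides with the first, giving (3). Finally, $x^{q+1}+y^{q+1}$ factors in $k[x,y]$ into $q+1$ distinct linear factors, since $T^{q+1}+1$ is separable ($p\nmid q+1$), and these define $q+1$ distinct lines in $H$ all through $[0{:}0{:}1]$, a star as in (4).

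For the ``if and only if'' statement at the end, in case (4) one may choose coordinates so that the star center is $[0{:}0{:}1]$, and then $\overline{F}$ visibly does not involve the third coordinate, hence is degenerate; conversely, a degenerate $\overline{F}$ of rank $\geq 2$ is projectively equivalent to $x^{q+1}+y^{q+1}$ and so produces a star. The main obstacle is not any single deep step, but rather the bookkeeping of the case analysis and pinning down the singularity descriptions in cases (2) and (3); the substantive ingredient that unlocks the argument is the product-rank bound $\mathrm{rank}(B)\geq 2$, which excludes non-reduced plane sections at the outset and lets the classification of three-variable Frobenius forms do the rest.
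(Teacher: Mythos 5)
Your proposal is correct, and it diverges from the paper's proof at the one substantive point. The paper handles the two needed inputs by citation: it invokes \cite[8.1]{extremal} for the fact that a plane section of an extremal surface is again extremal (hence defined by a Frobenius form in three variables), and then, after listing all six equivalence classes from \cite[7.1]{extremal}, it rules out the two non-reduced degenerate classes ($x^qy$ and $x^{q+1}$) by appealing to Zak's theorem that plane sections of a smooth variety are reduced. You replace both citations with a single self-contained computation: writing $H$ as $\vec{x}=M\vec{y}$ gives the representing matrix $B=(M^{[q]})^{\top}AM$ of $\overline{F}$, which shows directly that $\overline F$ is a Frobenius form, and Sylvester's inequality gives $\mathrm{rank}(B)\geq 3+3-4=2$ (using that entrywise Frobenius preserves rank over a perfect field), which excludes exactly the two rank-one, non-reduced classes. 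This is a clean, more elementary route to the same exclusion, and it has the mild advantage of producing the quantitative statement $\mathrm{rank}(\overline F)\geq 2$ as a byproduct. Your hand verification of the geometry of the four surviving normal forms (smoothness of the rank-3 form, the cusp of $x^{q+1}+y^qz$ via the parametrization $s\mapsto(s^q,-s^{q+1})$, the tangency in $y(x^q+y^{q-1}z)$, and the separable factorization of $x^{q+1}+y^{q+1}$) substitutes correctly for the paper's citations to \cite{cubics} and \cite[7.1]{extremal}; the only thing you still implicitly import from the classification is that the listed classes are exhaustive, which is exactly what \cite[7.1]{extremal} provides and which you use legitimately via Example~\ref{3var} and Theorem~\ref{smooth}.
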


 \begin{proof}
 Any plane section of an extremal surface $X$ is extremal \cite[8.1]{extremal}, so to understand a plane section $X\cap H$,  we look at the classification of  Frobenius forms in three variables given in \cite[7.1]{extremal}. 
 The non-degenerate ones are in one-one correspondence with the three partitions of 3: these are described in  Example~\ref{3var} (up to projective change of coordinates) and produce  the first three types of divisors listed above. It is also possible that 
 $X\cap H$ is defined by a degenerate Frobenius form. These are classified by partitions of 2 and of 1: 
 \begin{enumerate}
 \item[(i)] A star, projectively equivalent to  $x^qy + y^qx$ 
 \item[(ii)] the non-reduced scheme projectively equivalent to $x^qy$
 \item[(iii)] the non-reduced scheme  projectively equivalent to $x^{q+1}.$
 \end{enumerate}
 However, because $X$ is smooth, the plane sections $X\cap H$ are reduced (by e.g. \cite[1.15]{Zak}) and so only the first of these possibilities occurs. 
 \end{proof}

Proposition~\ref{planeSection}
has  the following  consequences; we prove only the second as the first is immediate;

 \begin{corollary}\cite[8.11]{extremal}
  \label{noTriangle}
Any collection of coplanar lines on an extremal surface is concurrent. In particular, an extremal surface contains no triangles.
  \end{corollary}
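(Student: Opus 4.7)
The plan is to read Corollary \ref{noTriangle} as a direct consequence of the classification of plane sections given in Proposition \ref{planeSection}. Suppose $L_1, \dots, L_k$ (with $k \geq 2$) are coplanar lines on the smooth extremal surface $X$, all lying in some plane $H \subset \mathbb{P}^3$. Each $L_i$ appears as a component of the plane section $X \cap H$, so $X \cap H$ has at least two one-dimensional components. I would then rule out three of the four types listed in Proposition \ref{planeSection}: types (1) and (2) yield an irreducible curve, which for $d = q+1 \geq 3$ cannot be a line and hence cannot contain one as a component; type (3) yields exactly one line plus an irreducible curve of degree $d-1 \geq 2$, which again contains no line. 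Thus $X \cap H$ must be of type (4), a star, and by definition of a star all $L_i$ pass through the common center, proving concurrence.

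For the second assertion, I would argue that any triangle on $X$ is automatically coplanar, so that the first part applies and gives a contradiction. Concretely, let $L_1, L_2, L_3$ be three distinct lines on $X$ meeting pairwise in three distinct points $p_{12}, p_{13}, p_{23}$. If these three points were collinear, all three lines would coincide with the line joining them, contradicting distinctness; so the points span a plane $H$. Since each $L_i$ contains two of these points, each $L_i$ lies in $H$. By the first part, the $L_i$ would then be concurrent, but by construction their pairwise intersections are three distinct points, a contradiction.

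The argument is essentially bookkeeping once Proposition \ref{planeSection} is in hand, so I do not expect a real obstacle. The only small care-point is the degree bound: one must observe that $d = q+1 \geq 3$ (since $q = p^e \geq 2$) to ensure that in types (1)--(3) no component other than the explicitly listed line can be a line. Once that is noted, the proof reduces to the one-line observation that a non-star plane section of $X$ contains at most one line as a component.
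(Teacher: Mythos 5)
Your argument is correct and is exactly the deduction the paper has in mind: the authors state that Corollary~\ref{noTriangle} is ``immediate'' from Proposition~\ref{planeSection} and omit the proof, and your case analysis (a plane section with two or more line components must be of type (4), hence a star) together with the coplanarity of a triangle is precisely that immediate deduction, with the degree bound $d=q+1\geq 3$ correctly noted.
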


 \begin{corollary}\label{LineInStar}
 Every line on a smooth extremal surface is in some star.
 \end{corollary}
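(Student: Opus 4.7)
The plan is to work in coordinates adapted to $L$ and to reduce the existence of a star plane containing $L$ to finding a zero of an explicit form on the pencil $\mathbb P^1$ of planes through $L$.

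Choose homogeneous coordinates on $\mathbb P^3$ with $L = \mathbb V(x_1, x_2)$, and write $F = \sum_{i=1}^{4} x_i^q L_i$ with $L_i = \sum_j a_{ij} x_j$. The vanishing of $F$ on $L$ forces the lower-right $2 \times 2$ block of the representing matrix $A = (a_{ij})$ to be zero, so $L_3$ and $L_4$ involve only $x_1, x_2$. Parameterize the pencil of planes through $L$ as $H_{[a:b]} = \mathbb V(a x_1 + b x_2)$, coordinatize each $H_{[a:b]}$ by $(s, x_3, x_4)$ via $(x_1, x_2) = (bs, -as)$, and restrict. A direct substitution shows the Frobenius form $\overline F = F|_{H_{[a:b]}}$ has the shape
\[
\overline F \;=\; \mathcal A\, s^{q+1} + \mathcal B\, s^q x_3 + \mathcal C\, s^q x_4 + \mathcal D\, s\, x_3^q + \mathcal E\, s\, x_4^q,
\]
where $\mathcal A, \mathcal B, \mathcal C, \mathcal D, \mathcal E$ are explicit bihomogeneous polynomials in $(a, b, a^q, b^q)$ and the entries $a_{ij}$.

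By Proposition~\ref{planeSection}, $X \cap H_{[a:b]}$ is a star precisely when $\overline F$ is a degenerate Frobenius form, which (unwinding the definition of degeneracy) is equivalent to the existence of a nonzero $\vec v$ with $M \vec v = 0$ and $M^{\top} \vec v^{[q]} = 0$, where $M$ is the representing matrix of $\overline F$. Exploiting the special shape of $M$ together with the fact that for smooth $X$ one must have $(\mathcal B, \mathcal C) \neq (0,0)$ for every plane in the pencil (otherwise $\overline F$ would factor as $s \cdot \ell^q$ for some linear form $\ell$, contradicting the reducedness of plane sections shown in the proof of Proposition~\ref{planeSection}), this condition collapses to the single equation
\[
\Phi(a, b) \;:=\; \mathcal D\, \mathcal C^q - \mathcal E\, \mathcal B^q \;=\; 0,
\]
a form of total degree $q^2 + 1$ in $(a, b)$. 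If $\Phi \not\equiv 0$, then since $k$ is algebraically closed $\Phi$ has a zero on $\mathbb P^1$, and the corresponding $H_{[a:b]}$ is a star plane containing $L$.

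The main obstacle is to rule out $\Phi \equiv 0$. I expect the simultaneous vanishing of all four bihomogeneous coefficients of $\Phi$ to translate, after taking $q$-th roots in $\bar k$, into the proportionality $(a_{41}, a_{42}) = \lambda (a_{31}, a_{32})$ for some $\lambda \in \bar k$. The last two rows of $A$ are then linearly dependent, so $\operatorname{rank}(A) \leq 3$; by \cite[5.3]{extremal} this forces $X$ to be singular, contradicting our hypothesis. The most delicate step will be carefully tracking the Frobenius twist and the sign $(-1)^q = -1$ that enter when extracting the clean equation $\Phi$ from the defining conditions of degeneracy.
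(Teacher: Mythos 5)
Your argument is correct in outline and takes a genuinely different route from the paper's. The paper's proof is synthetic: by Theorem~\ref{smooth} one may normalize $X$ so that it visibly contains \emph{some} star $X\cap H$ (Example~\ref{NotFermat}); if $L\not\subset H$ then $L$ meets the star at a point $p'$ lying on a line $L'$ of that star, and the plane spanned by the two intersecting lines $L, L'$ cuts $X$ in a section containing two distinct lines, which by the classification in Proposition~\ref{planeSection} must itself be a star. Your proof instead works directly in the pencil of planes through $L$ and reduces existence to the nonvanishing of the form $\Phi=\mathcal D\mathcal C^q-\mathcal E\mathcal B^q$ of degree $q^2+1$ on $\mathbb P^1$. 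I checked the computation: with $L=\mathbb V(x_1,x_2)$ one gets $\mathcal B=a_{13}b^q-a_{23}a^q$, $\mathcal C=a_{14}b^q-a_{24}a^q$, $\mathcal D=a_{31}b-a_{32}a$, $\mathcal E=a_{41}b-a_{42}a$, the degeneracy criterion via $M\vec v=0$, $M^\top\vec v^{[q]}=0$ is the right unwinding of the definition, and the reduction to $\Phi=0$ (using $(\mathcal B,\mathcal C)\neq(0,0)$, which indeed follows from reducedness of plane sections) is valid. The paper's route is shorter given Proposition~\ref{planeSection}; yours localizes the argument at $L$ itself, avoids first producing a star elsewhere, and as a bonus the degree $q^2+1$ of $\Phi$ foreshadows the count of stars through $L$ in Theorem~\ref{starcount}(a) (though the exact count would additionally require separability of $\Phi$).

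One caution on the step you flagged: the vanishing of all four coefficients of $\Phi$ does \emph{not} always yield $(a_{41},a_{42})=\lambda(a_{31},a_{32})$. The four conditions say that the matrix $N=\left(\begin{smallmatrix} a_{14}^q & -a_{13}^q\\ a_{24}^q & -a_{23}^q\end{smallmatrix}\right)$ annihilates both $(a_{31},a_{41})^\top$ and $(a_{32},a_{42})^\top$. If $N\neq 0$ this forces those two vectors to be proportional (or zero), which, since rows $3$ and $4$ of $A$ are supported in the first two columns, gives your proportionality and $\operatorname{rank}A\leq 3$. But there is a second branch, $N=0$, i.e.\ $a_{13}=a_{14}=a_{23}=a_{24}=0$; there the rows need not be proportional, yet columns $3$ and $4$ of $A$ vanish entirely, so $\operatorname{rank}A\leq 2$. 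Either branch contradicts smoothness via \cite[5.3]{extremal}, so your conclusion stands, but the case split must be stated.
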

 
 \begin{proof}[Proof of Corollary~\ref{LineInStar}]
Fix a line $L$ on a smooth extremal surface $X$. There is {\it some} plane $H$ such that $X\cap H$ is a star, as there is no loss of generality in assuming  $X$ as in Example~\ref{NotFermat} (Theorem~\ref{smooth}). 
Now, if $L$ lies in $H$, then the Corollary is proved, as  $L$ is a line in the star  $X\cap H$. But  if $L$ does not lie in $H$, then $L$  meets $H$ at some point $p'$.  Because point  $p' \in  X\cap H$, we know $p'$ lies on some line  $L'$ in the star   $X\cap H$.  Since
the two lines $L$ and $L'$  intersect at $p'$,  the plane $H'$  they span is a star plane centered at $p'$ (Proposition~\ref{planeSection}). The star  $X\cap H'$ thus contains our line $L$. 
 \end{proof}


\subsection{Extremal Collections of Points}

The  automorphism group of  any zero dimensional smooth extremal hypersurface $Y$
 (that is, of  a reduced extremal configuration of points in $\mathbb P^1$) acts transitively on the points in $Y$; this follows immediately from   Remark~\ref{StarLineSymmetry} by projectivizing the 
 the star plane.  
 However, a stronger symmetry holds that we record for future reference:

\begin{proposition}\label{dim0} Let $Y\subset \mathbb P^1$ be a reduced extremal configuration of points--that is, assume $Y$ is defined by a rank two Frobenius form in two variables. 
Then the projective linear automorphism group of $Y$ acts three-transitively on the points of $Y$. Furthermore, $\Aut(Y)$ is isomorphic to $\PGL(2, \mathbb F_q)$.
 \end{proposition}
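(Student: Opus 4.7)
The plan is to reduce to the case where $Y$ is literally the set of $\mathbb{F}_q$-rational points of $\mathbb{P}^1$, where both statements are classical consequences of the sharp three-transitivity of $\PGL(2,\mathbb{F}_q)$ on $\mathbb{P}^1(\mathbb{F}_q)$.

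By Theorem~\ref{smooth}, all rank two Frobenius forms of degree $q+1$ in two variables are projectively equivalent over $k$, so after a linear change of coordinates I may assume $Y$ is cut out by my favorite such form. I will choose $F = x^q y - x y^q$, whose representing matrix $\bigl[\begin{smallmatrix} 0 & 1 \\ -1 & 0 \end{smallmatrix}\bigr]$ is manifestly of rank two. Factoring $F = xy \prod_{\lambda \in \mathbb{F}_q^*}(x - \lambda y)$, one sees that the zero locus of $F$ in $\mathbb{P}^1(k)$ is exactly $\mathbb{P}^1(\mathbb{F}_q)$, the $q+1$ points with coordinates in the prime subfield of size $q$.

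Under this identification, $\Aut(Y)$ is the setwise stabilizer of $\mathbb{P}^1(\mathbb{F}_q)$ inside $\PGL(2,k)$. The subgroup $\PGL(2,\mathbb{F}_q)$ is obviously contained in this stabilizer. For the reverse inclusion, I will use that $\PGL(2,k)$ acts sharply three-transitively on $\mathbb{P}^1(k)$. Given any $g \in \PGL(2,k)$ stabilizing $\mathbb{P}^1(\mathbb{F}_q)$, pick three distinct points $P_1,P_2,P_3 \in \mathbb{P}^1(\mathbb{F}_q)$; then also $g(P_i) \in \mathbb{P}^1(\mathbb{F}_q)$. Sharp three-transitivity of $\PGL(2,\mathbb{F}_q)$ on its natural $(q+1)$-point set provides a unique $g' \in \PGL(2,\mathbb{F}_q)$ with $g'(P_i)=g(P_i)$ for $i=1,2,3$. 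Since $g$ and $g'$ agree at three points of $\mathbb{P}^1(k)$, sharp three-transitivity of $\PGL(2,k)$ forces $g=g' \in \PGL(2,\mathbb{F}_q)$. This identifies $\Aut(Y) \cong \PGL(2,\mathbb{F}_q)$, and the three-transitive action on $Y = \mathbb{P}^1(\mathbb{F}_q)$ is then the classical one.

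There is no serious obstacle here; the one delicate move is the initial choice of Frobenius form. Picking the generic Fermat representative $x^{q+1}+y^{q+1}$ would force a detour through the $(q+1)$-st roots of $-1$, whereas the equivalent form $x^q y - x y^q$ makes $Y$ equal to $\mathbb{P}^1(\mathbb{F}_q)$ on the nose and lets the structure of the stabilizer drop out of the sharp three-transitivity of $\PGL(2,k)$ together with the classical action of $\PGL(2,\mathbb{F}_q)$.
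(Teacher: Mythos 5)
Your proof is correct and follows essentially the same route as the paper's: normalize $Y$ to the form $x^qy-xy^q$ so that $Y=\mathbb P^1(\mathbb F_q)$, then use sharp three-transitivity of $\PGL(2,k)$ together with the fact that an element of $\PGL(2,k)$ determined by three $\mathbb F_q$-rational points (and their $\mathbb F_q$-rational images) lies in $\PGL(2,\mathbb F_q)$. The only quibble is the phrase ``prime subfield of size $q$,'' which is accurate only when $q=p$; for $q=p^e$ with $e>1$ the field $\mathbb F_q$ is a proper extension of the prime field, though this does not affect the argument.
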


\begin{proof}
We may assume that $Y$ is defined by the form $yz^q-zy^q$ (Theorem~\ref{smooth}), 
so $Y$ consists of the points $[\mu:1]$ where $\mu^q=\mu$, together with the "point at infinity" $[1:0]$.
 So the points of $Y$ are precisely the $\mathbb F_q$-points of $\mathbb P^1$. 

Now, given an ordered triple of three distinct points in $Y$, there is a unique automorphism $g$ of $\mathbb P^1$  sending them to any other ordered triple  in $Y$.
Because all six points  are defined over $\mathbb F_q$, the automorphism $g\in \PGL(2, k)$ is represented by a $2\times 2$ matrix with entries in $\mathbb F_q$, so that $g\in \PGL(2,  \mathbb F_q)$. In particular, it must send {\it every\/} $\mathbb F_q$-point of $\mathbb P^1$ to another $\mathbb F_q$-point of $\mathbb P^1$. That is, $g$ is an automorphism of $Y$. This establishes both claims of  Proposition~\ref{dim0}.
\end{proof}


\subsection{Hermitian Forms over Finite Fields}\label{Herm}
  A {\it Hermitian form} is a special kind of Frobenius form   in which the representing matrix $A$    satisfies $(A^{[q]})^\top=A$. In this case, all entries of $A$ satisfy  $a_{ij}^{q^2} = a_{ij}$, which means  they are in the finite field $\mathbb F_{q^2}$. Thus a Hermitian form is defined over the finite field $\mathbb F_{q^2}$. In this case, the Frobenius map ($x\mapsto x^q$) is an involution on the set of $\mathbb F_{q^2}$ points, so can  play a role analogous to complex conjugation.  See \cite[\S~19.1]{Hirschfeld}.
  
   The classification of Hermitian forms is well-known and simple: there is only one invariant, rank \cite[4.1]{BC}, where as the classification of Frobenius forms is more subtle \cite[7.1]{extremal}. On the other hand,  every {\it smooth} projective hypersurface defined by a Frobenius form is projectively equivalent (over the algebraically closed field $k$)  to one defined by a Hermitian form (Theorem~\ref{smooth}), although of course,  the needed change of coordinates is not usually defined over $\mathbb F_{q^2}$.

 \section{Configurations of Lines and Stars on Extremal Surfaces}

\subsection{Symmetry of Extremal surfaces}
 Smooth extremal surfaces are highly symmetric: 
 
\begin{proposition} \label{transStars} The automorphism group of a smooth extremal surface acts transitively on its set of stars. 
\end{proposition}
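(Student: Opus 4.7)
My plan is to fix a reference star via Theorem~\ref{smooth}, and then for any star on $X$ construct a linear automorphism of $X$ carrying its center to the reference center. By Theorem~\ref{smooth} I may assume $X = V(F)$ with $F = x^q w + xw^q + y^{q+1} + z^{q+1}$ (Example~\ref{NotFermat}), which provides a reference star $S_0$ at $p_0 = [1{:}0{:}0{:}0]$ with star plane $H_0 = V(w)$. Because a star and its center determine each other (the star plane is the tangent plane at the center), it suffices to show that for each star point $p$ of $X$ with star plane $H$, some $g \in \Aut(X)$ sends $p$ to $p_0$.

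I would construct $g$ in two stages. First, produce a linear isomorphism $\varphi_0 \colon H \to H_0$ with $\varphi_0^{*}(F|_{H_0}) \propto F|_H$. By Proposition~\ref{planeSection}, both restrictions are degenerate rank-two Frobenius forms in three variables, and these are all projectively equivalent to $x^q y + y^q x$ (as observed in the proof of Proposition~\ref{planeSection}), hence to each other. Any such $\varphi_0$ automatically sends $p$ to $p_0$, since the center of a star is distinguished as its unique singular point (the $q+1$ lines meet transversally only away from the concurrency point). Second, extend $\varphi_0$ to $\varphi \in \PGL(4,k)$ preserving $F$ up to scalar. Using the matrix formalism of Equation~(\ref{action}) with $A$ representing $F$, this amounts to solving $(\varphi^{[q]})^{\top} A\,\varphi = \lambda A$; writing $\varphi$ in block form with respect to a decomposition $k^4 = \tilde H \oplus \langle v \rangle$ (with $\tilde H$ projectivizing to $H$ and $v \notin \tilde H$), the $\tilde H$-block is already fixed by $\varphi_0$, so the problem reduces to a linear system for $\varphi(v)$ governed by the Frobenius-twisted pairing $B(x,y) = x^{[q]\top} A y$.

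The main obstacle is this extension step, essentially an instance of Witt's extension theorem in the Frobenius-form setting. Smoothness of $X$ forces $A$ to have full rank (see \S\ref{coordinateChange} and Theorem~\ref{smooth}), so $B$ is non-degenerate, and the linear system for $\varphi(v)$ is consistent. Once $\varphi$ is built, $\varphi \in \Aut(X)$ with $\varphi(p) = p_0$, completing the proof of transitivity on stars.
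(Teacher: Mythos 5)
Your first stage is sound and coincides with the opening of the paper's Lemma~\ref{NiceForm}: one can always choose coordinates so that $H=\mathbb V(x)$, the restricted form is $y^qz+z^qy$, and the center goes to the right point (the center is indeed the unique singular point of the plane section). The gap is in the extension step, and it is a real one, not a routine "Witt extension." First, the condition $(\varphi^{[q]})^{\top}A\,\varphi=\lambda A$ is \emph{not} a linear system in the unknown column $\varphi(v)$: the off-diagonal blocks contribute equations involving $\varphi(v)^{[q]}$ (semilinear in $\varphi(v)$), and the corner entry is the Frobenius form evaluated at $\varphi(v)$, so non-degeneracy of the pairing $B$ does not give consistency. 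Second, and more seriously, the extension with the $\tilde H$-block frozen generally does not exist. Concretely, after your first stage the full form is $F=x^q(ax+by+cz+w)+xw^q+y^qz+z^qy$ (this is what Lemma~\ref{NiceForm} delivers), and if you demand $\varphi|_{\tilde H}=\mathrm{id}$ and seek $\varphi(e_1)=(v_1,v_2,v_3,v_4)$ with $\varphi^*F_{\mathrm{std}}=F$ for $F_{\mathrm{std}}=x^qw+xw^q+y^qz+z^qy$, then comparing the coefficient of $xy^q$ forces $v_3=0$ while the coefficient of $x^qy$ forces $v_3^q=b$; so no extension exists unless $b=0$. (The scalar $\lambda$ is pinned to $1$ by the restriction to $\tilde H$, so it cannot rescue this.)

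What is actually true — and what the paper proves — is that the extension exists after composing $\varphi_0$ with a suitable isometry of $(\tilde H, F|_H)$. The paper's coordinate changes such as $[x:y:z:w]\mapsto[x:y:z+\lambda^qx:w-\lambda y]$ do exactly this: they preserve $H$ and $y^qz+z^qy$ but act nontrivially on $\tilde H$, and the existence of the needed $\lambda$ comes from solving the Artin--Schreier-type equations $\lambda^{q^2}-\lambda+b=0$ and $t^q+t+a'=0$ over the algebraically closed field $k$. That computation is the substantive content of the proof and is exactly what your appeal to "Witt's theorem in the Frobenius-form setting" would need to supply; note also that the classical Witt theorem is stated for a sesquilinear form relative to a field involution, whereas $x\mapsto x^q$ is not an involution on $k$, so it cannot be cited off the shelf.
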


By {\bf automorphisms} here, we mean {\it projective linear} transformations of the surface in $\mathbb P^3$. Thus $\Aut(X)$ is a
 subgroup of   $\PGL(4, k)$; in Section~\ref{automorphisms}, this group is discussed in detail.

Before proving the proposition , we deduce a corollary:
\begin{corollary} \label{transLines} The automorphism group of a smooth extremal surface  $X$ acts transitively on the set of all pairs 
 $(H, L)$ consisting of a star plane $H$ and a  line $L$ in the star  $H\cap X$. In particular, $\Aut(X)$ acts transitively on the set of all lines on $X$.
 \end{corollary}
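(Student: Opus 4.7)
The plan is to combine the transitivity of $\Aut(X)$ on stars from Proposition \ref{transStars} with the explicit cyclic action of Remark \ref{StarLineSymmetry} permuting the lines within a single star. Given two pairs $(H_1, L_1)$ and $(H_2, L_2)$, I would first fix a model star plane $H_0$: by Theorem \ref{smooth} we may assume $X$ is defined by $x^q w + w^q x + y^{q+1} + z^{q+1}$ and $H_0 = \mathbb V(w)$, as in Example \ref{NotFermat}. Proposition \ref{transStars} then supplies automorphisms $g_1, g_2 \in \Aut(X)$ carrying the stars $X \cap H_1$ and $X \cap H_2$, respectively, to the model star $X \cap H_0$. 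Because a star plane is uniquely recovered from its star (e.g.\ as the unique plane containing all $d$ of the concurrent lines, or as the tangent plane at the center), each $g_i$ automatically sends $H_i$ to $H_0$; in particular $g_1(L_1)$ and $g_2(L_2)$ both land among the $q+1$ lines of the model star.

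Now Remark \ref{StarLineSymmetry} provides a subgroup $\{[x:y:z:w] \mapsto [x:y:\mu z:w] : \mu \in \mu_{q+1}\}$ of $\Aut(X)$ which stabilizes $H_0$ setwise and acts transitively on the lines of $X \cap H_0$. Choose $h$ in this subgroup with $h(g_1(L_1)) = g_2(L_2)$. Then the composition $g_2^{-1} \circ h \circ g_1$ is an automorphism of $X$ carrying $(H_1, L_1)$ to $(H_2, L_2)$, which establishes the first claim. The ``in particular'' statement falls out immediately: by Corollary \ref{LineInStar} every line $L$ on $X$ lies in at least one star, so lives in some pair $(H, L)$, and transitivity on pairs projects onto transitivity on lines.

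The only piece of bookkeeping worth flagging as a (small) obstacle is the passage in the first paragraph from ``automorphism carrying a star to a star'' to ``automorphism carrying a star plane to a star plane.'' This is painless once one observes that $\Aut(X) \subset \PGL(4, k)$ acts linearly on $\mathbb P^3$ and that a star plane is the projective span of its $d$ coplanar lines, so the group action on stars lifts canonically to one on the ambient planes. With this in hand, the rest of the argument is a short two-step composition.
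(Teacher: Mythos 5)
Your proposal is correct and follows essentially the same route as the paper: reduce to the model star plane $\mathbb V(w)$ via Proposition~\ref{transStars}, then use the $\mu_{q+1}$-action of Remark~\ref{StarLineSymmetry} to move among the lines of that star. The only cosmetic difference is that you move both pairs to the model and compose, whereas the paper moves a single arbitrary pair to the standard one; these are equivalent formulations of transitivity.
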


\begin{proof}[Proof of Corollary~\ref{transLines}] 
Without loss of generality, assume  $X$ is defined by $x^{q}w + w^qx + y^{q+1} + z^{q+1}$ (Theorem~\ref{smooth}).  Given an arbitrary pair $(H, L)$, Proposition~\ref{transStars} says there is an automorphism of $X$ taking $H$ to  the star plane $H'$  defined by $w=0$   (see also Example~\ref{NotFermat}). But then we can compose with an  automorphism of $X$ preserving  $H'$ while 
taking the image of $L$ to any line in  the star $H'\cap X$ (Remark~\ref{StarLineSymmetry}). \end{proof}

To prove Proposition~\ref{transStars}, we make use of the following lemma.

 \begin{lemma}\label{NiceForm}
 Given an arbitrary  star $X\cap H$ with center $p$ on a smooth  extremal surface $X$,  we may choose coordinates for  $\mathbb P^3$ so that 
\begin{equation}\label{useful}
 p=[0:0:0:1],  \,\, \,\,\,\, H = \mathbb V(x),  \,\,\,\,{\text{and}}  \,\,\,\,\,X = \mathbb V(x^q\ell + xw^q+y^qz+z^qy),
\end{equation}
for some linear form $\ell = ax+by+cz+w$.
 \end{lemma}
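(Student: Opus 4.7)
The plan is to normalize $F$ through a sequence of coordinate changes, each chosen to preserve the conditions already arranged. First, using the transitive action of $\PGL(4, k)$ on flags (point, plane through the point), I would pick coordinates with $p = [0{:}0{:}0{:}1]$ and $H = \mathbb V(x)$. Then $X = \mathbb V(F)$ for some Frobenius form $F$, and since $p \in X$ the coefficient of $w^{q+1}$ in $F$ vanishes.

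Next I would analyze the restriction $F|_H = F(0,y,z,w)$, viewed as a form on $H$ with homogeneous coordinates $y, z, w$. On $H$ the point $p$ becomes $[0{:}0{:}1]$, and the star $X\cap H$ is a union of $q+1$ lines through $p$; each such line is cut out by a linear form $\alpha y + \beta z$ involving no $w$. Hence $F|_H$ depends only on $y$ and $z$, making it a rank-two Frobenius form in two variables. By the classification (Theorem~\ref{smooth}), a suitable linear change of $y, z$ (keeping $x, w$ fixed, hence preserving $p$ and $H$) brings $F|_H$ to $y^q z + z^q y$.

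Now every monomial of $F$ has the form $x_i^q x_j$, and the $x$-free monomials contribute exactly $y^q z + z^q y$, so
\[
F \;=\; \alpha x^{q+1} + \beta x^q y + \gamma x^q z + \delta x^q w + \varepsilon y^q x + \zeta z^q x + \eta w^q x + y^q z + z^q y
\]
for some constants in $k$. Writing this as a matrix $A$ as in (\ref{matrix}) and expanding along its bottom row gives $\det A = \delta\eta$, so the smoothness of $X$---which forces $A$ to have full rank by Theorem~\ref{smooth}---yields $\delta, \eta \neq 0$. A rescaling $x \mapsto \lambda x$, $w \mapsto \mu w$ with $\lambda^q\mu = \delta$ and $\lambda\mu^q = \eta$ (solvable over the algebraically closed $k$, since these reduce to $\mu^{q^2-1} = \eta^q/\delta$) normalizes $\delta = \eta = 1$. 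Finally, the shear $w \mapsto w + \varepsilon^{1/q} y + \zeta^{1/q} z$ preserves $p$, $H$, and $F|_H$ (which is independent of $w$); using additivity of the Frobenius, the new $w^q x$ term expands to cancel exactly the $\varepsilon y^q x + \zeta z^q x$ contributions, producing
\[
F = x^q(ax + by + cz + w) + xw^q + y^q z + z^q y,
\]
as required.

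The main obstacle is really just careful bookkeeping---verifying at each step that the coordinate transformation keeps intact the earlier normalizations. The single genuine insight is the observation in the second paragraph that concurrency of the star at $[0{:}0{:}1]$ on $H$ forces $F|_H$ to be independent of $w$; this pins the shape of $F$ down so tightly that a few explicit rescalings plus one Frobenius-exploiting shear finish the job.
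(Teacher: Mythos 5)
Your proof is correct and takes essentially the same route as the paper's: normalize the restriction to the star plane to $y^qz+z^qy$, use smoothness (full rank of the representing matrix) to see that the coefficients of $x^qw$ and $w^qx$ are nonzero, then rescale and shear away the remaining off-diagonal terms. The only slip is the sign in the final shear---to cancel $\varepsilon y^qx+\zeta z^qx$ you need $w\mapsto w-\varepsilon^{1/q}y-\zeta^{1/q}z$, since the $+$ version doubles these terms outside characteristic two---and the fact that smoothness forces full rank is \cite[5.3]{extremal} rather than Theorem~\ref{smooth}.
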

 
 \begin{proof}  Choose coordinates so  that the star plane $H$ is defined by $x=0$. In this case, the form $F$ defining $X$ is 
 $$
 F = xG + G'(y,z,w)
 $$
where $G$ is some form of degree $q$ and $G'$ is a Frobenius form in the variables $y, z, w$. The form $G'$ defines the star $X\cap H$ in the hyperplane $H\cong \mathbb P^2$. 
In particular,    $G'$  is  {\it degenerate}  (Proposition~\ref{planeSection}). So there is a linear change of coordinates involving only $y, z, w$ such that $G'$ transforms into a rank two Frobenius form in  two variables.  Since all reduced Frobenius forms in 2 variables are projectively equivalent, 
 we can assume  without loss of generality, that 
 \[ 
F =  xG +  yz^{q} + zy^q.
\]

   Observe that $xG\in \langle x^{q}, y^{q}, z^q, w^q \rangle$, which implies that   $G\in    \langle x^{q-1}, y^{q}, z^q, w^q \rangle$. Because $\deg G = q$, 
we can write
\[G = x^{q-1} \ell + (\alpha_1 y+\alpha_2 z + \alpha_3 w)^q
\]
for some scalars $\alpha_i$ and linear form $\ell$. That is,
$$
F =  x^q\ell  +  x(\alpha_1 y+\alpha_2 z + \alpha_3 w)^q +   zy^{q}+yz^{q}.
$$
The scalar $\alpha_3$ can not be zero. Indeed,   if $\alpha_3=0$,  then $F \in \langle x^q, y^q, z^q\rangle$, so  the rank of $F$ would be at most three and  $F$ could not define a smooth surface \cite[5.3]{extremal}.  So we may replace  the form $\alpha_1 y+\alpha_2 z + \alpha_3 w$ by $w$ (which changes $\ell$ but nothing else) to assume without loss of generality that 
\begin{equation}\label{d}
F = x^q\ell + xw^q +  zy^{q}+yz^{q}.
\end{equation}
The linear form $\ell =  ax+by+cz+dw$ must satisfy $d\neq 0$, for otherwise $F\in \langle x, y, z\rangle$ and again $F$ would have rank at most 3.
Finally, the change of coordinates 
$$
[x:y:z:w] \mapsto [\lambda x : y : z :  \lambda^{-1/q}w]
$$
where $\lambda^{q^2-1} = \frac{1}{d^{q}}$ 
transforms $F$ (formula~(\ref{d}))  into
$$
 (\lambda x)^q(a\lambda x+by+cz+d\lambda^{-1/q}w) + xw^q +  zy^{q}+yz^{q}
 $$
 which has the desired form since the coefficient of $x^qw$ is $ d \lambda^{q-\frac{1}{q}} =1$. Lemma~\ref{NiceForm} is proved.
\end{proof}

\begin{proof}[Proof of Proposition~\ref{transStars}] It suffices to show that 
 given an  arbitrary star point  $p$ on an arbitrary smooth extremal surface $X$ in $\mathbb P^3$ of degree $q+1$,  there is a choice of coordinates for $\mathbb P^3$ so that  $p = [0:0:0:1]$ and the defining form of $X$ is $w^qx + x^qw +y^qz + z^qy$.

Let $H$ be the  star plane  centered at $p$. Use  Lemma~\ref{NiceForm} to assume that  $p=[0:0:0:1]$, $H$ is defined by $x=0$ and that the Frobenius form defining $X$ looks like
$$
F = x^q(ax+ by+cz+w) + w^qx + y^qz+z^qy.
$$
We will perform a sequence of changes of coordinates that all fix $p$ and its tangent plane $H$, but eventually bring $F$ into the desired anti-diagonal form.

First,   we show we can change coordinates so as to assume $b=0$. Consider an indeterminate scalar $\lambda$.  Perform the change of coordinates
 $$
 [x: y : z : w] 
 \overset{\phi}\mapsto
  [x: y : z + \lambda^q  x : w - \lambda y].
  $$
The map $\phi$ fixes $p$ and $H$   but $\phi^*$ transforms the form $F$ into
$$
\begin{aligned}
& \ x^q(ax+by+cz + \lambda^q  cx+w - \lambda y) + (w- \lambda y)^qx+ y^q(z+\lambda^qx) +(z^q+\lambda^{q^2}x^q)y\\
 & \ = \ x^q((a+c\lambda^q) x + (b+\lambda^{q^2} - \lambda)y + c z + w)  + w^qx + y^qz + z^qy.
 \end{aligned}
$$
So any choice of $\lambda$ such  that $\lambda^{q^2} - \lambda+b =0$ will transform  $F$ into
$$
F_1 = 
 x^q(a' x  + c z + w) + w^qx+ y^qz + z^qy,
$$
 where $a'\in k$, without moving $p$ or $H$.  Similarly, interchanging the roles of $y$ and $z$, we can  transform $F_1$ into  
$$
F_2=  x^q\left(a'x + w \right)  + w^qx + y^qz + z^qy
$$ 
without moving star point $p$ or its star plane $H$.

Finally, again let $\lambda$ be an indeterminate scalar and consider the change of coordinates 
\begin{equation}\label{Tim}
 [x: y : z : w] 
 \overset{\phi}\mapsto
  [x: y : z  : w + \lambda x].
\end{equation}
This fixes $p$ and $H$ but transforms $F_2$ to 
$$(a'+ \lambda  + \lambda^q)x^{q+1}+ x^qw + w^qx +  y^qz + z^qy.$$
So choosing $\lambda$ to be any root of the polynomial $t^{q} + t + a'$, the form  $F_2$ is transformed into the standard form 
$
  x^qw  + w^qx + y^qz + z^qx
$
without changing the star point $p=[0:0:0:1]$.
This completes the proof of Proposition~\ref{transStars}.
\end{proof}



\subsection{Star Points and Star Planes}  We now  count configurations of star points on star planes and  on lines on the extremal surface:

\begin{theorem}\label{starcount}
Let  $L$ be an arbitrary line on a smooth extremal surface $X$. Then
\begin{enumerate}
\item[(a)] There are exactly  $q^2+1$ star points on $L$. Equivalently, there are exactly $q^2+1$ stars on the surface $X$ containing $L$. 
\item[(b)] There are exactly $q(q^2+1)$  lines on $X$ that intersect $L$, not counting $L$ itself.
\item[(c)] There are exactly $q^4$ lines on $X$ skew to $L$.
\end{enumerate}
\end{theorem}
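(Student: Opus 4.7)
The plan is to reduce to an explicit choice of coordinates via Theorem~\ref{smooth} and Corollary~\ref{transLines}: without loss of generality, $X = \mathbb V(x^qw + w^qx + y^qz + z^qy)$ and $L = \mathbb V(w,y)$. All three counts then follow from direct computations in these coordinates.

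For (a), I would parametrize the planes containing $L$ as $H_c = \mathbb V(w - cy)$ for $c \in \mathbb P^1$. By Proposition~\ref{planeSection}, $X \cap H_c$ is a star iff (by reducedness of plane sections of a smooth surface) the restricted Frobenius form is projectively equivalent to a form in only two variables. The boundary planes $c = 0$ and $c = \infty$ are immediately stars. For finite $c \neq 0$, the restriction factors as $y\cdot Q$ with $\deg Q = q$, and $Q|_{\{y=0\}} = cx^q + z^q = (c^{1/q}x + z)^q$ is a perfect $q$-th power, so $Q$ meets $\{y=0\}$ at a single point with multiplicity $q$. After a change of coordinates moving this distinguished point to a standard location, the star condition becomes the requirement that the transformed $Q$ involve only two of the three variables; a short computation identifies this with the equation $c^{q^2-1} = 1$, whose $q^2 - 1$ roots in $k^*$ together with the two boundary planes give $q^2 + 1$ star planes through $L$, equivalently $q^2 + 1$ star points on $L$. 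The main obstacle here is that the representing matrix of $F|_{H_c}$ has rank $2$ for every $c$, so rank alone does not separate star sections from the "line plus irreducible tangent curve" sections of type (3); the explicit change of coordinates exploiting the perfect $q$-th power structure of $Q|_{\{y=0\}}$ is the key technical step.

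Part (b) then follows quickly from (a) and Proposition~\ref{planeSection}. Any line $L' \neq L$ on $X$ meeting $L$ at a point $p$ gives two distinct coplanar lines in $T_pX \cap X$, and of the four plane-section types only a star contains two distinct lines. Hence $p$ is one of the $q^2+1$ star points of (a), and $L'$ is one of the $q$ lines of the star at $p$ other than $L$. Since two distinct lines in $\mathbb P^3$ meet in at most one point, distinct star points give disjoint families of such $L'$, yielding the total $q(q^2+1)$.

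For (c), I would count skew lines directly. A line $L'$ skew to $L$ is not contained in either plane $\mathbb V(w)$ or $\mathbb V(y)$ (otherwise Corollary~\ref{noTriangle} would force it to meet $L$), so it meets each in a single point, normalizable to $p_1 = [a : 1 : c : 0]$ and $p_2 = [a' : 0 : c' : 1]$. The condition $\overline{p_1p_2} \subset X$ expands, thanks to the additivity of the Frobenius, to exactly four nonzero monomial equations in $(\lambda,\mu)$ when the parametrization $\lambda p_1 + \mu p_2$ is substituted into $F$. These reduce to two separable "trace" equations $c^q + c = 0$ and $a'^q + a' = 0$ (each with $q$ roots in $k$) together with $c' = -a^q$ and $a^{q^2} = a$ (so $a$ ranges over $\mathbb F_{q^2}$, giving $q^2$ choices). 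Multiplication gives $q \cdot q \cdot q^2 = q^4$ skew lines.
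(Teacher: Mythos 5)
Your proposal is correct. Parts (a) and (b) follow essentially the same route as the paper: restrict $F$ to the pencil of planes through $L$ and test the resulting ternary Frobenius form for degeneracy, then deduce (b) from the fact that two coplanar lines on $X$ must lie in a common star. (Your degeneracy equation $c^{q^2-1}=1$ for the pencil $\mathbb V(w-cy)$ checks out: with $A$ the matrix of $F|_{H_c}$, degeneracy is equivalent to $\ker A$ being carried to $\ker A^\top$ by the $q$-power Frobenius, which gives exactly $c^{q^2}=c$; this yields the same $q^2+1$ as the paper's normalization, where the count appears as $q^2$ finite solutions of $\lambda^{q^2}=\lambda$ plus the plane at infinity. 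Your sketch of "moving the distinguished point" is the vaguest step, but the paper's one-line substitution $w\mapsto w-\lambda^q z$ shows the computation does close up.)

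Part (c) is where you genuinely diverge. The paper argues combinatorially: it fixes a star containing $L$, picks one of the $q$ other lines $M$ in it, uses (a) to find $q^2$ further stars through $M$, and invokes triangle-freeness (Corollary~\ref{noTriangle}) to see that the $q$ remaining lines in each such star all miss $L$; this produces $q\cdot q^2\cdot q=q^4$ lines, after which one must still verify completeness and the absence of double counting. You instead parametrize a skew line by its intersection points $[a:1:c:0]$ and $[a':0:c':1]$ with the two star planes $\mathbb V(w)$ and $\mathbb V(y)$ containing $L$, and solve the four coefficient equations directly; these do reduce to $c^q+c=0$, $a'^q+a'=0$, $c'=-a^q$, and $a\in\mathbb F_{q^2}$, giving $q\cdot q\cdot q^2=q^4$. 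Your computation is self-contained, makes the bijection with skew lines transparent (so no separate completeness check is needed), and does not rely on parts (a), (b), or triangle-freeness; the paper's argument is coordinate-free once (a) is known and reuses the star machinery that drives the rest of the paper. Both are valid.
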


Before proving Theorem~\ref{starcount}, we deduce a few corollaries.

\begin{corollary}\label{StarPointsOnStarPlane}
 Each star plane of  an  extremal surface  contains exactly $q^3+q^2+1$ star points---that is, each star  contains $q^3+q^2$ star points  other than its center.
\end{corollary}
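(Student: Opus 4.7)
My plan is to reduce the count to an application of Theorem~\ref{starcount}(a) together with the basic fact from Proposition~\ref{planeSection} that a star plane section consists of $q+1$ concurrent lines. Concretely, fix a star plane $H$ with center $p$, so that $X \cap H$ is a star: the reduced union of exactly $q+1$ lines $L_0, L_1, \dots, L_q$ on $X$, all passing through $p$, and meeting pairwise \emph{only} at $p$ (since any two lines on $X$ determine at most one intersection point in $\mathbb P^3$).

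The first step is to observe that every star point lying in $H$ is automatically a point of $X \cap H$, since star points are by definition points of $X$. Hence every star point of $X$ contained in $H$ lies on one of the $q+1$ lines $L_i$. The second step is to count the star points on each $L_i$: Theorem~\ref{starcount}(a) applied to $L_i$ gives exactly $q^2 + 1$ star points per line. The third step is inclusion--exclusion: the center $p$ lies on all $q+1$ of the lines, while any other point of $X \cap H$ lies on exactly one $L_i$ (otherwise two distinct $L_i, L_j$ would meet at a point different from $p$, contradicting the star structure). Therefore the total number of star points in $H$ is
\[
(q+1)(q^2+1) - q\cdot 1 \;=\; q^3+q^2+1,
\]
where the subtraction of $q$ compensates for counting $p$ a total of $q+1$ times instead of once. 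Subtracting the center $p$ itself gives $q^3+q^2$ star points on the star besides its center.

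There is essentially no obstacle here; the only subtlety is verifying the two things I used implicitly: that no two of the $L_i$ share a point off $p$ (immediate, since they span $H$ and their intersection in a plane is a single point, which must be $p$ by the definition of a star), and that a star point lying in $H$ must in fact lie on $X \cap H$ (immediate from the definition of star point). Both are light, so the proof is essentially a one-line count.
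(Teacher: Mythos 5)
Your proof is correct and follows essentially the same route as the paper: both count the $q^2+1$ star points on each of the $q+1$ lines of the star via Theorem~\ref{starcount}(a) and correct for the center being shared by all the lines, yielding $q^2(q+1)+1 = q^3+q^2+1$. Your version just makes explicit the (true, and implicitly used) facts that every star point of $H$ lies on one of the lines of the star and that distinct lines of the star meet only at the center.
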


\begin{proof}[Proof of  Corollary~\ref{StarPointsOnStarPlane}]
Let $p$ be the center  of the star $H\cap X$. Each of the $q+1$ lines in this contains exactly $q^2$ star points other than $p$ by Theorem~\ref{starcount}. So $H$ contains exactly $q^2(q+1)+1$ star points. \end{proof}

\begin{corollary} \label{count} Let $X$ be a smooth extremal surface of degree $q+1$.
\begin{enumerate}
\item[(a)] 
There are a total of $q^4+q^3+q+1 = (q^3+1)(q+1) $   distinct lines on $X$, each containing exactly $q^2+1$ star points.
\item[(b)]  There are a total of $q^5+q^3+q^2+1 = (q^3+1)(q^2+1)  $ distinct stars  on  $X$,  each containing exactly $q+1$ lines. 

\end{enumerate}
\end{corollary}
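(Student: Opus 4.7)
The plan is to derive both counts from Theorem~\ref{starcount} by a short double-counting argument, combined with the observation that stars, star planes, and star points on $X$ are in natural bijection (each star point $p$ determines its tangent plane $H = T_pX$, and $H \cap X$ is a star centered at $p$, by Proposition~\ref{planeSection} and Definition~\ref{starDef}).

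First, to count lines, I would fix an arbitrary line $L_0 \subset X$ and partition the other lines on $X$ into those meeting $L_0$ and those skew to $L_0$. By parts (b) and (c) of Theorem~\ref{starcount}, these numbers are $q(q^2+1)$ and $q^4$ respectively, so
\[
L - 1 \;=\; q(q^2+1) + q^4 \;=\; q^4 + q^3 + q,
\]
where $L$ denotes the total number of lines on $X$. Thus $L = q^4 + q^3 + q + 1 = (q+1)(q^3+1)$, giving the line count in (a). (No consistency check across choices of $L_0$ is needed, since Theorem~\ref{starcount} applies to every line.) The assertion that each line contains $q^2+1$ star points is just a restatement of Theorem~\ref{starcount}(a).

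Next, to count stars, I would double-count the set of incidence pairs
\[
\mathcal{I} \;=\; \{\,(M, \Sigma) : M \text{ is a line on } X,\ \Sigma \text{ is a star on } X,\ M \subset \Sigma\,\}.
\]
On the one hand, Theorem~\ref{starcount}(a) says each line is contained in exactly $q^2+1$ stars (via the bijection between stars containing $M$ and star points on $M$), so $|\mathcal{I}| = L(q^2+1)$. On the other hand, each star consists by Definition~\ref{starDef} of exactly $q+1 = \deg X$ lines, so $|\mathcal{I}| = S(q+1)$, where $S$ is the total number of stars on $X$. Equating and solving,
\[
S \;=\; \frac{L(q^2+1)}{q+1} \;=\; \frac{(q+1)(q^3+1)(q^2+1)}{q+1} \;=\; (q^3+1)(q^2+1) \;=\; q^5 + q^3 + q^2 + 1,
\]
which is (b).

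There is no real obstacle here: once Theorem~\ref{starcount} is in hand, both formulas drop out of elementary counting. The only point requiring care is to confirm the bijection between stars containing a fixed line $M$ and star points lying on $M$, but this is immediate since a star through $M$ is uniquely determined by its center (and that center must lie on $M$), while conversely the star centered at any star point of $M$ contains $M$ by Corollary~\ref{LineInStar} and Proposition~\ref{planeSection}.
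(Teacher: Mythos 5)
Your proof is correct and follows essentially the same route as the paper: part (a) is the same partition of lines into those meeting and those skew to a fixed line via Theorem~\ref{starcount}(b),(c), and part (b) is the same double count of line--star incidence pairs, dividing $L(q^2+1)$ by the $q+1$ lines per star. The bijection you verify at the end is already built into the statement of Theorem~\ref{starcount}(a) (``Equivalently, there are exactly $q^2+1$ stars on the surface $X$ containing $L$''), so no extra work is needed there.
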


\begin{proof}
(a). Fix one line  $L$ on $X$. There are exactly $q(q^2+1)$ lines on $X$ which intersect $L$ by Theorem~\ref{starcount}(b). On the other hand, there are $q^4$ lines on $X$ disjoint from $L$ by Theorem~\ref{starcount}(c). So the total number of lines, counting $L$, is $q^4+q^3+q+1$.

(b). There are a total of $q^4+q^3+q+1$ lines, and each line is contained in exactly $q^2+1$ stars. So the number of pairs $(L, H)$ consisting of a line $L$ on a star $H\cap X$ must be $(q^4+q^3+q+1)(q^2+1)$. On the other hand, each 
 star contains exactly $q+1$ lines,  so the total number of stars is
 $$
\frac{(q^4+q^3+q+1)(q^2+1)}{q+1} =  \frac{(q^3+1)(q+1)(q^2+1)}{q+1} =(q^3+1)(q^2+1) = q^5+q^3+q^2+1.
$$

\end{proof}

\begin{proof}[Proof of Theorem~\ref{starcount}] 
(a).
The line $L$ belongs to some star $H\cap X$  by Corollary~\ref{LineInStar}.   By Corollary~\ref{transLines}, we can 
choose coordinates so that $X$ is defined by 
$$
F = x^qw + xw^q +  y^{q}z+z^{q}y,
$$
and $L\subset H$ are  cut out by $x, y$ and $x$ respectively.

Consider the pencil of planes containing the line $L$. Each  plane  $H_{\lambda}$ in the pencil  is defined by the vanishing of some linear form  $\lambda x - y$. The plane $H$ itself is defined by $x=0$ (the case where $\lambda=\infty$), which we already assumed is  a star.

Restricting the Frobenius form $F$ to the plane $H_{\lambda}$, we can set $y=\lambda x $ and view the  plane section $X\cap H_{\lambda}$ as defined by the Frobenius form
$$
\overline{F}  = x^qw + xw^q + \lambda^q x^q z +  \lambda x z^q,
$$
 in the variables $x, z, w$.  
 The plane section $X\cap H_{\lambda}$ is a star if and only if the form $\overline F$ is degenerate (Cf. Proposition~\ref{planeSection}). 
 
 We claim that  $\overline{F} $ is degenerate precisely when $\lambda$ is a root of the separable polynomial
 $
t^{q^2} - t. 
 $
This will imply that there are precisely $q^2$  planes (besides $H$) which contain $L$ as a component of a star, so the proof of (a) will be complete once we have proved the claim.

 To this end,  consider the  change of coordinates
 $$
 [x:z:w] \mapsto [x:z:w-\lambda^qz]. 
 $$
 This transformation sends $\overline{F}$ to
 $$
\overline{F_1} = x^qw+xw^q + (\lambda-\lambda^{q^2}) x z^q,
 $$
 which is clearly degenerate if 
$ \lambda^{q^2}- \lambda = 0.$
On the other hand, if $\lambda^{q^2}- \lambda \neq  0$, then $\overline{F}$  is {\it not degenerate}. Indeed,  in this case
$$
  \overline{F_1}= x((\gamma z  + w)^q  + x^{q-1}w) $$
 for some non-zero $\gamma$, which  is projectively equivalent to $ x(z^q + x^{q-1}w)$ , so  defines
a union of  the line $L$ and an irreducible curve 
of degree $q$, not a star. This completes the proof of (a).

(b).  A  line $M$ on $X$ intersects $L$ if and only if $L$ and $M$ appear together  in a star.
There are $q^2+1$ stars containing $L$ and each of them contains $q$ distinct lines (other than $L$). Of course, a pair $L$ and $M$ can not appear together in more than one star, since the plane producing a  star is uniquely determined by any two lines in it. So there must be 
$q(q^2+1)$ distinct lines $M$ which intersect $L$ on our extremal surface.

(c).
Fix a star  $H\cap X$ containing $L$ (this is possible by Corollary~\ref{LineInStar}). 
There are $q$ other lines in this star. Pick one, $M$. Now $M$ appears in exactly $q^2$ other stars besides $H$ by Theorem~\ref{starcount}(a). For each of these stars, each of the other $q$ lines in the star is a  line $L'$ which does not meet $L$. Indeed,  if $L'$ meets $L$, then the lines $L, L', M$ form a triangle, contradicting Corollary~\ref{noTriangle}. In this way, we  produce $q^3$ distinct lines $L'$ on $X$ which meet $M$ but not $L$. Now,  varying over each of the $q$ lines $M$ in the star $H\cap X$ (other than $L$), we produce $q^3$ new lines for each of the $q$ choices of line $M$. 
In total, we found $q^4$ lines skew to $L$.

Finally, we need to show that our above count includes {\it every} line $L'$ on $X$  skew to $L$.  Say $L'$ is skew to $L$. Pick any star point  $p$ on $L$. The star plane $H$ at $p$ contains $L$ but not $L'$ (otherwise $L'$ would meet $L$). So $L'$ must meet $H$ at some point $p'$, necessarily in the star $H\cap X$. So $p'$ is a star point on some line $M$ in  the star $X\cap H$, and the star centered at $p'$ contains $L'$. This means $L'$ is a line of the type we already counted in the previous  paragraph.
So there are exactly $q^4$ lines on the extremal surface  skew to any fixed line on the surface. 
\end{proof}


\begin{corollary}\label{skewpair-connected}
Fix any pair of skew lines on an extremal surface. Then there are {\it exactly}  $q^2+1$ lines on the surface that meet both.\end{corollary}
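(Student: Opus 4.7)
The plan is to set up a bijection between the star points on $L$ and the lines meeting both $L$ and $L'$. Since Theorem~\ref{starcount}(a) tells us that $L$ has exactly $q^2+1$ star points on it, this bijection will give the desired count.

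First I would fix a star point $p$ on $L$ and let $H_p$ be the corresponding star plane. Note $L' \not\subset H_p$: every line of the star $X\cap H_p$ passes through $p\in L$, so if $L'\subset H_p$ then $L'$ would meet $L$, contradicting skewness. Hence $L'\cap H_p$ is a single point $p'$, which lies on $X\cap H_p$. Since $p'\neq p$ (else $p'\in L\cap L'$), this $p'$ lies on exactly one line $M$ of the star at $p$ other than $L$, and that $M$ meets both $L$ (at $p$) and $L'$ (at $p'$). This produces a map from star points on $L$ to the set of lines meeting both $L$ and $L'$.

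Next I would check injectivity: if two distinct star points $p_1,p_2\in L$ produced the same $M$, then $M$ would pass through both $p_1$ and $p_2$, forcing $M=L$, contradicting the construction. For surjectivity, suppose $M$ is any line on $X$ meeting $L$ at $p$ and $L'$ at $p'$; since $L$ and $M$ are coplanar lines on $X$ meeting at $p$, Corollary~\ref{noTriangle} (or Proposition~\ref{planeSection}) ensures that the plane they span is a star plane centered at $p$, so $p$ is a star point on $L$, and by the argument above $M$ is exactly the line we would have associated to $p$.

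The only subtlety in the above plan is handling the case $L' \subset H_p$ in the construction, and verifying that $p'\neq p$, but both are immediate consequences of the skewness of $L$ and $L'$ combined with the fact that every line of a star passes through its center. Everything else is a routine application of Theorem~\ref{starcount}(a) and the no-triangles Corollary~\ref{noTriangle}.
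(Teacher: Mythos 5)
Your proof is correct and follows essentially the same route as the paper: both arguments put the lines meeting $L$ and $L'$ in bijection with the $q^2+1$ star points on $L$, using that $L'$ meets each star plane $H_p$ in a single point lying on a unique line of the star. The only cosmetic difference is that the paper gets uniqueness of the line through $p$ from the no-triangle property, while you get it from the uniqueness of the star line through the point $L'\cap H_p$; both are valid.
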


\begin{proof}[Proof of Corollary~\ref{skewpair-connected}] 
Fix arbitrary skew lines $L$ and $L'$ on the extremal surface $X$. We claim that for each star point $p$ on $L$, there is exactly one line  through $p$ meeting $L'$. Because there are exactly $q^2+1$ star points on $L$ (Theorem~\ref{starcount}(a)) and any intersection point of lines on $X$ is a star point, the claim proves the corollary.

To prove the claim, observe that $L'$ is not in the star plane $H$ centered at $p$, since that would imply $L'$ meets $L$. Thus $L'$ meets $H$ at a unique point $p'$, which means $p'$ is in the star $X\cap H$, and hence in (exactly) one of the lines $M$ in the star $X\cap H$. The line $M$ meets both $L$ and $L'$. There is no other line through $p$ meeting both $L$ and $L'$, for if $M'$ is another, then $M, M', L'$ form a triangle, contrary to Corollary~\ref{noTriangle}.
\end{proof}


\subsection{The Automorphism Group of an Extremal Surface}\label{automorphisms}
We now use the geometry of extremal surfaces  to describe their automorphism groups.  The results in this section are (essentially) known, albeit in somewhat different contexts with slightly stronger hypotheses; we 
include straightforward new proofs for completeness. The first result is due to Shioda when $d>3$ 
{\cite[p97]{Shioda} and Duncan and Dolgachev when $d=3$ {\cite[\S~5.1]{dolgachev-duncan.automorphisms}.

\begin{theorem} \label{autogroup}  Let $X$ be a smooth extremal hypersurface of  degree $q+1$ and dimension $n-2\geq 0$ over algebraically closed field $k$.
The group  $\Aut(X)$ of projective linear automorphisms of  $X$ is isomorphic to the finite group $\PU(n, \mathbb F_{q^2})$, where 
 $\PU(n, \mathbb F_{q^2})$ is the quotient of the finite unitary group
$$
\U(n, \mathbb F_{q^2}) = \left\{g \in \GL(n, \mathbb F_{q^2})\;\; \big| \;\; (g^{[q]})^\top \, g = I_n\right\}  
$$
by its center,  $$\{ \lambda I_n\;\; | \;\; \lambda^{q+1} = 1\}, $$ the cyclic group of scalar matrices of order $q+1$.
\end{theorem}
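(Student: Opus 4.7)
The plan is to exploit the freedom granted by Theorem~\ref{smooth} to pick a maximally convenient defining equation, then read off the automorphism group directly from the action \eqref{action} of linear changes of coordinates on the representing matrix. Concretely, after a projective change of variables I may assume $X$ is cut out by the Fermat Frobenius form $x_1^{q+1}+\cdots+x_n^{q+1}$, whose representing matrix is the identity $I_n$. By \eqref{action}, a matrix $g\in\GL(n,k)$ descends to an element of $\Aut(X)\subset\PGL(n,k)$ exactly when $(g^{[q]})^\top g=\mu\,I_n$ for some $\mu\in k^\times$ (a representing matrix of the transformed form must be a nonzero scalar multiple of $I_n$, since the Fermat form and its scalar multiples define the same surface).

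The second step is to rescale such a $g$ into the unitary group. Since $k$ is algebraically closed I can pick $\nu\in k^\times$ with $\nu^{q+1}=\mu^{-1}$, and then $h:=\nu g$ satisfies $(h^{[q]})^\top h=\nu^{q+1}\mu\,I_n=I_n$. Note $[h]=[g]$ in $\PGL(n,k)$, so replacing $g$ by $h$ does not change the automorphism of $X$ it induces. This defines a well-defined surjection from the set of matrices satisfying $(h^{[q]})^\top h=I_n$ onto $\Aut(X)$.

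The key observation, and the step most worth highlighting, is that $(h^{[q]})^\top h=I_n$ already forces $h$ to have entries in $\mathbb F_{q^2}$, so one does not have to impose this condition by hand. Indeed the identity rearranges to $h^{[q]}=(h^\top)^{-1}$, and applying the Frobenius $[q]$ once more gives
\[
h^{[q^2]} \;=\; \bigl((h^\top)^{-1}\bigr)^{[q]} \;=\; \bigl((h^{[q]})^\top\bigr)^{-1} \;=\; (h^{-1})^{-1} \;=\; h,
\]
where the middle equality uses $(h^{[q]})^\top=h^{-1}$, the transpose of the defining relation. Thus every entry of $h$ is fixed by $x\mapsto x^{q^2}$, so $h\in\U(n,\mathbb F_{q^2})$ in the sense of the statement.

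Finally I compute the kernel of the surjection $\U(n,\mathbb F_{q^2})\twoheadrightarrow\Aut(X)$ obtained by composing with $\GL(n,k)\twoheadrightarrow\PGL(n,k)$. An element of the kernel is a scalar matrix $\lambda I_n$ that also lies in the unitary group, which forces $\lambda^{q+1}=1$; conversely every such scalar matrix is trivial in $\PGL(n,k)$. This is precisely the center of $\U(n,\mathbb F_{q^2})$ as described in the theorem, so the first isomorphism theorem yields $\Aut(X)\cong \PU(n,\mathbb F_{q^2})$. The only real obstacle is the bookkeeping in the preceding step---ensuring that the choice of $\nu$ disappears modulo scalars---but this is taken care of automatically because any two choices of $\nu$ differ by an element of $\mu_{q+1}$, which lies in the kernel just computed.
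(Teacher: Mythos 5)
Your proposal is correct and follows essentially the same route as the paper: reduce to the Fermat form via Theorem~\ref{smooth}, rescale a projective automorphism to satisfy $(g^{[q]})^\top g = I_n$, deduce $g^{[q^2]}=g$ so that $g$ lies in $\U(n,\mathbb F_{q^2})$, and identify the kernel of $\U(n,\mathbb F_{q^2})\to\Aut(X)$ with the scalar matrices $\lambda I_n$, $\lambda^{q+1}=1$. The only cosmetic difference is that you derive $h^{[q^2]}=h$ by iterating $h^{[q]}=(h^\top)^{-1}$ rather than the paper's observation that $h$ and $h^{[q^2]}$ are both inverses of $(h^{[q]})^\top$.
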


\begin{remark}\label{Dim0}
We have already computed that when $X$ is zero dimensional, $\Aut(X)$ is isomorphic to $\PGL(2, \mathbb F_q)$ (Proposition~\ref{dim0}), so Theorem~\ref{autogroup} confirms that 
$\PU(2, \mathbb F_{q^2}) \cong \PGL(2, \mathbb F_q)$.
\end{remark}

\begin{proof}[Proof of Theorem~\ref{autogroup}]
Choose coordinates  so that the extremal hypersurface $X$ is defined by
 $F= x_1^{q+1} + x_2^{q+1} + \dots + x_n^{q+1}$ (Theorem~\ref{smooth}).  The group $\Aut(X)$ is the subgroup of $\PGL(n, k)$  represented by matrices $g\in \GL_n(k)$ such that $g^* F  = \lambda F$ for some non-zero scalar $\lambda.$ Because $k$ is algebraically closed, the class of $g$ in $\PGL(n, k)$  can be represented by
   the scalar multiple $\mu g$ where $\mu^{q+1} = \frac{1}{\lambda}$, so without loss of generality we assume that  $g^* F = F$.
Such $g$ satisfy
\begin{equation}\label{g}
 (g^{[q]})^\top \, I_n \  g =  I_n, 
\end{equation}
by formula~(2) in Section~2. 
 Raising (\ref{g})  to the $q^{th}$-power and transposing, we have 
$
 (g^{[q]})^\top g^{[q^2]}  = I_n.
$\
 In particular, both $g$ and $g^{[q^2]}$ are inverses of the matrix  $(g^{[q]})^\top$, so that 
 $
g = g^{[q^2]}.
$
 Thus each entry of $g$ is  fixed by the Frobenius map $x\mapsto x^{q^2}$, and hence in $\mathbb F_{q^2}$---that is, we can assume $g\in \PGL(4, \mathbb F_{q^2})$.
 This means that the naturally induced group map 
  $$\U(n, \mathbb F_{q^2}) \overset{\gamma}\longrightarrow {\Aut} (X)$$ 
is surjective, and it  remains only to  compute its kernel.

  An element  $g\in \U(n, \mathbb F_{q^2})$ induces the identity map on $X$ if and only if $g=\lambda I_n$ for some $\lambda \in k^*$.
 But a scalar matrix $\lambda I_n$ is in $\U(n)$ if and only if $(\lambda I_n)^{[q]}(\lambda I_n) = \lambda^{q+1} g^{[q]} g^\top =  I_n$---that is, if and only if $\lambda^{q+1} = 1$. 
 On the other hand, all $(q+1)$-st roots  $\lambda$ of unity in $k$ are  in $\mathbb F_{q^2}$:  if $\lambda^{q+1}=1$, then
  $\lambda^{q^2} = \lambda^{q^2-1} \lambda = (\lambda^{q+1})^{q-1} \lambda = \lambda.$
So  the kernel is  the cyclic group of order $q+1$ consisting of the scalar matrices of $q+1$-st roots of unity, as claimed. The theorem is proved.
\end{proof}

\begin{remark}\label{CompareShioda} Shioda's theorem is actually concerned with {\bf Fermat hypersurfaces}---projective  hypersurfaces defined by 
 $x_0^d+x_1^d+\cdots+ x_n^d$ over an arbitrary algebraically closed field (although he omits the case where $d=3$, which our result includes). Shioda shows that, \emph{with the  exception of the  case  where $d=p^e+1$}, the automorphism group of the Fermat hypersurface is generated by the "obvious" automorphisms: the  $S_n$ permuting the coordinates and the group
  $(\mu_{d}^n/\mu_{d}) \cong \mu_{d}^{n-1}$ scaling the coordinates by $d$-th roots of unity. When $d=p^e+1>3$, he proves Theorem~\ref{autogroup} for the Fermat hypersurface using a different method than our argument. Likewise, Duncan and Dolgachev are concerned with the automorphism group of cubic surfaces in general; in the special case of the Fermat cubic surface of characteristic two (the extremal case), their proof uses a result of Beauville to show there is no "canonical point", and then appeals to known facts about the automorphism group of a finite Hermitian geometry \cite[19.1.7, 19.1.9] {Hirschfeld}.  \end{remark}

\begin{remark}\label{otherForms}  In the special case that extremal surface defined by a {\it Hermitian} form (meaning that its matrix $A$ satisfies $A^{[q]}=A^\top$), 
 the  proof of Theorem~\ref{autogroup} shows that
$$
{\Aut}(X) =  \{g \in \GL(n, \mathbb F_{q^2}) \; | \; (g^{[q]})^\top A\, g  = A\}/\{\lambda I_n\; | \; \lambda^{q+1}=1\}.$$
For an arbitrary smooth extremal surface $X$, the automorphism group  $\Aut(X)$ is  conjugate to $\PU(n, \mathbb F_{q^2})$ via the automorphism in $\PGL(n, k)$ taking it to the Fermat hypersurface.  
\end{remark}


\subsection{\texorpdfstring{Star Points and $\mathbb F_{q^2}$-rational points}{Star Points and Fq2-rational points}}\label{StarOnHerm}
We next give a straightforward proof of a familiar fact that connects extremal surfaces to finite geometry:

\begin{proposition}\label{HermStar} The star points on a  extremal surface  of degree $q+1$ defined by a Hermitian form are precisely its $\mathbb F_{q^2}$ points.
\end{proposition}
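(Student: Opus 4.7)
The plan is to establish both inclusions separately: that every $\mathbb F_{q^2}$-point of $X$ is a star point, and conversely that every star point lies in $X(\mathbb F_{q^2})$.

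For the inclusion $X(\mathbb F_{q^2}) \subseteq \{\text{star points}\}$, let $p \in X(\mathbb F_{q^2})$. The transformation rule $A \mapsto [g^{[q]}]^\top A g$ of \S\ref{coordinateChange} shows that the Hermitian condition $A^{[q]} = A^\top$ is preserved under $\mathbb F_{q^2}$-linear changes of coordinates (since $g^{[q^2]}=g$ when $g \in \GL(4,\mathbb F_{q^2})$), so I would use such a change of coordinates to move $p$ to $[1{:}0{:}0{:}0]$. Now $F(p)=0$ forces $a_{11}=0$, and the derivative computation $\partial F/\partial x_k = \sum_i a_{ik} x_i^q$ identifies $T_pX$ as the hyperplane $\mathbb V(\sum_{k\geq 2} a_{1k} x_k)$. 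Invoking the Hermitian symmetry $a_{i1}=a_{1i}^q$, the $q$-th power of this same equation shows that $\sum_{i\geq 2} a_{i1} x_i^q = 0$ also holds on $T_pX$. Substituting into $F = \sum a_{ij}x_i^q x_j$, the cross-terms linear in $x_1$ vanish on $T_pX$, leaving
\[
F|_{T_pX} \;=\; \sum_{i,j\geq 2} a_{ij}\, x_i^q\, x_j,
\]
a Frobenius form that does not depend on $x_1$ and is therefore degenerate on the plane $T_pX$. By Proposition~\ref{planeSection}, $T_pX \cap X$ is a star; its center is necessarily $p$.

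For the reverse inclusion $\{\text{star points}\} \subseteq X(\mathbb F_{q^2})$, I would exploit the transitivity of $\Aut(X)$ together with the fact that at least one $\mathbb F_{q^2}$-rational star point is known to exist. Take the specific Hermitian surface $\mathbb V(x^qw + xw^q + y^qz + yz^q)$ (the matrix is the antidiagonal identity, with $\mathbb F_p$-entries, hence Hermitian); Lemma~\ref{NiceForm} exhibits $[0{:}0{:}0{:}1]$ as a star point, which is plainly $\mathbb F_{q^2}$-rational. By Remark~\ref{otherForms}, the automorphism group of any Hermitian $X$ lies in $\PGL(4,\mathbb F_{q^2})$, hence preserves $X(\mathbb F_{q^2})$; by Proposition~\ref{transStars} it acts transitively on star points. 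Therefore the $\Aut(X)$-orbit of the rational star point exhausts the set of star points while remaining inside $X(\mathbb F_{q^2})$. Since smooth Hermitian forms of fixed rank over $\mathbb F_{q^2}$ are unique up to $\mathbb F_{q^2}$-equivalence (\S\ref{Herm}), this argument transfers to every smooth extremal surface defined by a Hermitian form.

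The main obstacle I expect is the first inclusion: its content is precisely the Hermitian symmetry $a_{ji} = a_{ij}^q$, which is what automatically forces the first column of $A$ to vanish once the first row does, yielding the degeneracy of the restricted form. For a general (non-Hermitian) Frobenius form defined over $\mathbb F_{q^2}$ this would fail and $\mathbb F_{q^2}$-points need not be star points; so the heart of the proof is in extracting the right consequence of the Hermitian condition at the tangent plane.
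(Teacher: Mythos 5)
Your proof is correct, but the second inclusion is argued by a genuinely different route from the paper's. For the direction ``$\mathbb F_{q^2}$-points are star points'' you work with an arbitrary Hermitian matrix $A$, normalize $p$ to $[1{:}0{:}0{:}0]$ by an $\mathbb F_{q^2}$-linear change of coordinates (correctly noting this preserves the Hermitian condition), and read off degeneracy of $F|_{T_pX}$ directly from the symmetry $a_{i1}=a_{1i}^q$; the paper does the same computation but only for the specific anti-diagonal form $x^qw+w^qx+y^qz+z^qy$, exhibiting an explicit shear $[x{:}y{:}z]\mapsto[x{:}y+ax{:}z+bx]$ that degenerates the tangent section. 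Your version is cleaner in that it isolates exactly where Hermitian symmetry enters. For the converse direction the two arguments genuinely diverge: the paper counts, showing there are exactly $q^5$ \ $\mathbb F_{q^2}$-points and exactly $q^5$ star points on the affine chart $x\neq 0$ (using Corollary~\ref{count} and Corollary~\ref{StarPointsOnStarPlane}) and concluding by pigeonhole; you instead use that $\Aut(X)\subseteq\PGL(4,\mathbb F_{q^2})$ for Hermitian $X$ (Remark~\ref{otherForms}) together with transitivity on star points (Proposition~\ref{transStars}), so the orbit of one rational star point is simultaneously all star points and a subset of $X(\mathbb F_{q^2})$. Both are valid and rely only on results established earlier in the paper; the counting argument has the virtue of not needing the automorphism-group computation, while yours avoids the line/star counts entirely. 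Two small points: the base star point $[0{:}0{:}0{:}1]$ on $\mathbb V(x^qw+xw^q+y^qz+yz^q)$ comes from the proof of Proposition~\ref{transStars} (or a direct check that $\mathbb V(x)\cap X$ is a star) rather than from Lemma~\ref{NiceForm}; and your final transfer step invokes the cited classification of Hermitian forms over $\mathbb F_{q^2}$ (\S~\ref{Herm}), whereas the paper proves the needed $\mathbb F_{q^2}$-rationality of the coordinate change $g$ by hand from $(g^{[q]})^\top J g=A$ --- worth noting if one wants the proof self-contained.
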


\begin{remark}  Proposition~\ref{HermStar} (together with Corollary~\ref{count}) recovers the known fact that there are precisely $q^5+q^3+q^2+1$ \,\, points in a Hermitian sub-geometry of the finite projective 3-space over $\mathbb F_{q^2}$
\cite{Segre.67}, \cite[19.1.5]{Hirschfeld}. 
\end{remark}

\begin{proof}
We first verify that the star points of  extremal surface $X$ defined by  $x^qw+w^qx + y^qz+z^qy$ are precisely its $\mathbb F_{q^2}$ points. 
 By symmetry, 
it suffices show this for   the open set $\mathcal U$ where $x \neq 0$.

Consider  a point $p=[1:a:b:c] \in \mathcal  U$ whose coordinates are in  $\mathbb F_{q^2}$.  The tangent plane $T_pX$ at $p$  is\,
$
c^qx + b^qy + a^q z + w,
$\,  so  the plane section  $T_pX\cap X$ is defined by  the Frobenius form in $x, y, z$
\begin{equation}\label{intersection}
-x^q(c^qx + b^qy + a^q z)  - x(c^qx + b^qy + a^q z)^q + y^qz + z^qy.
\end{equation}
Thus $p$ is a star point  if and only if (\ref{intersection})  is a degenerate  Frobenius form (Proposition~\ref{planeSection}).
Because the projective transformation
\[
\phi: [x:y:z] \mapsto [x:y+ax: z+bx], 
\]
 transforms the form  (\ref{intersection}) into the degenerate form $y^qz+z^qy$ (remember that $c + c^q + a^qb + b^qa = 0$), we conclude that all $\mathbb F_{q^2}$-points of $\mathcal U$ are star points.

To check that the $\mathbb F_{q^2}$-points comprise  {\it all} star points of $\mathcal U$, we count them.  Note{\footnote{Proof: 
The map $\mathbb F_{q^2} \overset{\gamma}\longrightarrow \mathbb F_{q^2}$ sending $t\mapsto t^q+t$ is linear over the subfield $\mathbb F_{q}$, and its kernel consists of $q-1$ distinct $(q-1)$-st roots of $-1$ together with $0$. So there are exactly $q$ solutions  to $\gamma(t) = \gamma(-a^qb) = -a^qb-ab^q$ in $\mathbb F_{q^2}^2$ as well.
}}
 that for each choice of the pair $(a, b)\in \mathbb F_{q^2}^2$,  the polynomial $t^q+t + (a^{q}b + b^{q}a)$ has $q$ distinct solutions in $\mathbb F_{q^2}$.
  So there are exactly  $q^5$ \,    $\mathbb F_{q^2}$-rational points  $[1:a:b:c]$ \ in $\mathcal U$, all of which are star points. On the other hand, there are exactly $q^5$ star points in $\mathcal U$ as well---of the total $q^5+q^3+q^2+1$ star points on $X$ (Theorem~\ref{count}(b)), there are precisely $q^3+q^2+1$ in the complement of  $\mathcal U$, because $X\setminus \mathcal  U=\mathbb V(w)$   is a  star plane (Corollary~\ref{StarPointsOnStarPlane}). By the pigeon hole principle, the  star points of $\mathcal U$ (and hence of $X$)   are precisely its $\mathbb F_{q^2}$-points.

Now consider an arbitrary  smooth variety $X'$, given by some 
Frobenius form that is Hermitian---that is, whose representing matrix $A$ satisfies $(A^{[q]})^\top = A$. There is a change of coordinates $g\in PGL(4, k)$  transforming $X'$ to  the extremal surface $X = \mathbb V(x^qw+w^qx + y^qz+z^qy)$, so that using formula~(\ref{action}) in \S~\ref{coordinateChange}, we have
\begin{equation}\label{J}
(g^{[q]})^\top J g = A
\end{equation}
where 
$J = 
\left[
\begin{smallmatrix} 0 &0 & 0 &1\\
0 &0 & 1 &0\\
0 &1 & 0 &0\\
1 &0 & 0 &0\\
\end{smallmatrix}
\right]
$ is the matrix of the Frobenius form defining  $X$.
Raising (\ref{J})  to the $q$-th power and transposing, we have 
\begin{equation}\label{JJ}
(g^{[q]})^\top (J^{[q]})^\top g^{[q^2]} = (A^{[q]})^\top = A.
\end{equation}
Setting (\ref{J}) and (\ref{JJ}) equal and remembering that $(J^{[q]})^\top =J$ , we have 
\[(g^{[q]})^\top J g^{[q^2]}  = (g^{[q]})^\top J g,\]
from whence it follows  that $g^{[q^2]} = g$. This tells us that  $g\in \PGL(4, \mathbb F_{q^2})$.

Now because $g^{-1}$ is an isomorphism from  $X$ to $X'$, it defines a bijection between their respective  star points. Because $g$ has entries in $\mathbb F_{q^2}$ it also preserves the $\mathbb F_{q^2}$-rationality of points. So the star  points of $X'$ are precisely its $\mathbb F_{q^2}$-points. 
\end{proof}


\section{star chords}

Star chords  are auxiliary lines  not  on the extremal surface but none-the-less intimately related:
\begin{definition}
A {\bf star chord} for  a smooth extremal surface $X$ is a line in $\mathbb P^3$ {\it not on $X$} which passes through (at least) two star points of $X$.
 \end{definition}

\begin{remark}\label{caution} Despite the name,  a star chord $\ell$  through  star point $p$ is never in the star plane $T_pX$ centered at $p$. Otherwise, assume $\ell\subset T_pX$. Because there is another star point  $p'\in \ell$, the point $p'$ would then be on some line $L$ in the star $T_pX\cap X.$ But then  both $L$ and $\ell$ contain both $p$ and $p'$, which means $\ell = L$, contrary to the fact that $\ell\not\subset X$.
\end{remark}

\begin{remark} In the special case where the extremal surface is defined by a Hermitian form over $\mathbb F_{q^2}$,  star chords are  {\it Baer sublines} or {\it hyperbolic lines} in the terminology of finite geometry (see {\it e.g.} \cite[p4]{Bamberg+Durante} or \cite[p102]{Masini}). In this context, lines on the  surface are called its {\it generators}. 
\end{remark}

The basic facts about star chords are the following:

\begin{theorem}\label{StarLines}  Let $\ell$ be an arbitrary  star chord for a smooth  extremal surface. Then
\begin{enumerate}
\item[(i)]  The  stars centered at points on $\ell$ share no lines.
\item[(ii)]  The star planes of  all stars centered along $\ell$  intersect in a common line $\ell'$ which is skew to $\ell$ and  also a star chord for $X$.
  \item[(iii)]  The star planes of all stars centered along $\ell'$ intersect in the original star chord $\ell$.
\item[(iv)]
The star chords $\ell$ and $\ell'$ each  intersect $X$ in $q+1$ distinct star points.
\end{enumerate}
\end{theorem}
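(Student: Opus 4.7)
The plan is to reduce to the standard Hermitian form and then exploit Hermitian polarity. By Theorem~\ref{smooth} and \S~\ref{Herm} we may take $X = \mathbb{V}(F)$ with $F = x^qw + xw^q + y^qz + z^qy$, whose matrix $J$ satisfies $J^{[q]} = J^\top = J$; then Proposition~\ref{HermStar} identifies the star points of $X$ with its $\mathbb{F}_{q^2}$-rational points. A direct derivative computation shows that the tangent plane at any point $p$ is cut out by the linear form $L_p(X) := p^{[q]} J X$, and when $p, r$ are both $\mathbb{F}_{q^2}$-points, the identities $p^{[q^2]} = p$ and $J^{[q]} = J = J^\top$ yield the polarity identity $L_p(r)^q = L_r(p)$. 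In particular, for $\mathbb{F}_{q^2}$-points we have $p \in T_rX$ if and only if $r \in T_pX$.

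Part (i) is immediate: two stars centered at distinct points $p, p' \in \ell$ share a line $L \subset X$ only if $L$ contains both centers, forcing $L = \ell$, contradicting $\ell \not\subset X$. For part (iv) applied to $\ell$: since $\ell$ has at least two star points, it is defined over $\mathbb{F}_{q^2}$, and $F|_\ell$ is a nonzero Hermitian form in two variables. Because any rank-one Hermitian form in two variables is $\mathbb{F}_{q^2}$-equivalent to $x^{q+1}$, which has only one zero, $F|_\ell$ must have rank two; its standard form $yz^q + zy^q$ then has exactly $q+1$ distinct $\mathbb{F}_{q^2}$-zeros, i.e., $q+1$ star points on $\ell$.

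For part (ii), set $\ell' := \bigcap_{p} T_pX$, where $p$ ranges over the star points of $\ell$. Parametrizing $\ell$ by $sp_0 + tp_1$ for two star points, Frobenius linearity gives $L_{sp_0+tp_1} = s^q L_{p_0} + t^q L_{p_1}$, so $\ell' = T_{p_0}X \cap T_{p_1}X$, a line (the two star planes are distinct since a star plane is the tangent plane at its own center). Skewness $\ell \cap \ell' = \emptyset$ follows from Remark~\ref{caution}: since $\ell \not\subset T_pX$ for any star point $p \in \ell$, we have $\ell \cap T_pX = \{p\}$, so a common point of $\ell \cap \ell'$ would have to equal every one of the $q+1 \geq 3$ star points of $\ell$. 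Next, $\ell' \not\subset X$: otherwise $\ell'$ would lie in the star $T_pX \cap X$ and so pass through $p$, contradicting skewness. Finally, to see $\ell'$ is a star chord, fix any star point $p \in \ell$: each of the $q+1$ star lines in $T_pX \cap X$ passes through $p \notin \ell'$ and lies in the star plane $T_pX \supset \ell'$, so it meets $\ell'$ in a single point. These star lines are $\mathbb{F}_{q^2}$-rational (the rank-two Hermitian restriction $F|_{T_pX}$ has standard form $y^qz + z^qy$, whose $q+1$ linear factors are all $\mathbb{F}_{q^2}$-defined), and $\ell'$ is $\mathbb{F}_{q^2}$-rational too, so the $q+1$ intersection points are $\mathbb{F}_{q^2}$-rational points of $X$, i.e., star points on $\ell'$. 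This proves (ii) and the $\ell'$-half of (iv).

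Finally, part (iii) is a polarity argument: for any star point $r \in \ell'$ and any star point $p \in \ell$, we have $r \in \ell' \subset T_pX$, so by polarity $p \in T_rX$. Since $\ell$ contains $\geq 2$ such points, both lie in the plane $T_rX$, forcing $\ell \subset T_rX$ for every star point $r \in \ell'$. Thus $\ell \subseteq \bigcap_{r} T_rX$, where $r$ ranges over the star points of $\ell'$; by the argument of (ii) applied to the star chord $\ell'$, this intersection is itself a line, so equality holds. The main hurdle is really the setup: establishing the polarity identity and carefully tracking $\mathbb{F}_{q^2}$-rationality (especially for the star lines used in the star-chord verification of $\ell'$). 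Once that is in place, each of (i)--(iv) is a short deduction.
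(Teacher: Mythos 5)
Your proof is correct, but it takes a genuinely different route from the paper's. The paper argues synthetically: it fixes two star points $p_1,p_2$ on $\ell$, sets $\ell'=T_{p_1}X\cap T_{p_2}X$, and derives everything from incidence and degree considerations --- each line of the star at $p_1$ meets $\ell'$ in a distinct point (since $p_1\notin\ell'$), these $q+1$ points exhaust $\ell'\cap X$ because $\deg X=q+1$, each lies on lines from two different stars and hence is a star point, and a final step checks that $\ell'$ does not depend on the choice of $p_1,p_2$. You instead normalize to the Hermitian form, invoke Proposition~\ref{HermStar} to identify star points with $\mathbb F_{q^2}$-points, and run everything through the unitary polarity $p\mapsto \mathbb V(p^{[q]}JX)$. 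This buys you several things: the reciprocity $r\in T_pX \Leftrightarrow p\in T_rX$ makes (iii) essentially formal; the Frobenius semilinearity $L_{sp_0+tp_1}=s^qL_{p_0}+t^qL_{p_1}$ shows at once that every tangent plane along $\ell$ contains $T_{p_0}X\cap T_{p_1}X$, so the well-definedness of $\ell'$ is automatic rather than a separate verification; and the count of $q+1$ points in (iv) comes from the classification of binary Hermitian forms over $\mathbb F_{q^2}$ rather than from a degree argument. The cost is a dependence on Proposition~\ref{HermStar}, on the $\mathbb F_{q^2}$-classification of Hermitian forms, and on careful bookkeeping of $\mathbb F_{q^2}$-rationality --- essentially re-deriving the Baer-subline picture that the paper only mentions in a remark and otherwise avoids. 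There is no circularity, since Proposition~\ref{HermStar} precedes this theorem. Two small points you leave implicit but which are immediate: the $q+1$ points where the star lines at $p$ meet $\ell'$ are pairwise distinct because two star lines meet only at the center $p\notin\ell'$, and these points exhaust $\ell'\cap X$ because $\ell'\cap X=\ell'\cap(T_pX\cap X)$.
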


Before proving Theorem~\ref{StarLines}, we observe that it ensures  that  the next definition makes sense.

\begin{definition}\label{dual}
The {\bf dual of a star chord} $\ell$ for an extremal surface  is the unique  star chord $\ell'$ contained in all star planes  centered along $\ell$, or equivalently, the intersection of all star planes centered along $\ell$.
\end{definition}

Duality between star chords is a symmetric relationship: Theorem~\ref{StarLines}(iii) implies that $\ell'$ is the dual star chord of $\ell$ if and only if $\ell$ is the dual star chord of $\ell'$.

\begin{example}\label{standardStarLineConfig} 
The lines  $\ell=\mathbb V(x, y)$  and $\ell'=\mathbb V(z, w)$ are a pair of dual star chords on the 
 Fermat extremal surface $X = \mathbb V(x^{q+1}+y^{q+1}+z^{q+1}+w^{q+1})$. Indeed, $\ell$ is not on $X$ but contains the  $q+1$ star  points $p_a  = [0:0:a:1]$, where $a^{q+1}=-1$.
  To check that $p_a$ is a star point, observe that the tangent plane to $p_a$ is  $T_{p_a}X = \mathbb V(a^qz+w) = \mathbb V(z-aw),$ which intersects $X$ in a star. 
 These star planes $\mathbb V(z-aw)$ all obviously contain $\ell'$, so $\ell'$ is their common intersection,  as promised by Theorem~\ref{StarLines}. Note that, dually, the star points on  $\ell'$ are the points
 $p_b'=[b:1:0:0]$ where $b^{q+1}=-1,$ and the corresponding star planes $\mathbb V(x-by)$ intersect in $\ell$.
\end{example}


\begin{proof}[Proof of Theorem~\ref{StarLines}]

Fix any two star points $p_1$ and $p_2$ on $\ell$. Since $p_1$ is on {\it every} line in the star centered at  $p_1$, and likewise for $p_2$, any shared line 
  shared line between these stars would contain both $p_1$ and $p_2$ and hence be $\ell$ itself. But by definition, the star chord $\ell$ is not on $X$. So stars centered on $\ell$ can  not share any lines, proving (i).

Now,  let $\ell' = T_{p_1}X \cap T_{p_2}X$.  
Note that $\ell' \not\subset X$: otherwise, $\ell'\subset T_{p_1}X\cap X$ and $\ell'\subset T_{p_2}X\cap X$, making $\ell'$ a shared line between these stars, which would contradict (i). 

We claim that  $\ell'$ is skew to $\ell$.  First note that $\ell\neq \ell'$, for otherwise the star chord $\ell$ lies in the star plane $T_{p_1}X$, contradicting Remark~\ref{caution}. So at least one of $p_1$ or $p_2$---say $p_1$---is not on $\ell'$.
 Now, if  $\ell$ and $\ell'$ are not skew, the  unique plane they span  is  necessarily the plane $T_{p_1}X$, since both   planes contain $\ell' $ and $p_1\not\in \ell'$.
 But now the star chord $\ell$ is in the star plane  $T_{p_1}X$, again contradicting Remark~\ref{caution}.

We now claim $\ell'$ is a star chord intersecting $X$ in $q+1$ distinct star points.
Observe that 
because $\ell' \subset T_{p_1}X$, it meets each line in the star $T_{p_1}X\cap X$. But since the center $p_1$ is not on $ \ell'$, we know $\ell'$ must meet each of the $q+1$ lines in the star $T_{p_1}X\cap X$ in a {\it distinct} point. These $q+1$ points make up the full intersection 
 $\ell'\cap X$, since $X$ has degree   $q+1$. Similarly, since also $p_2\not\in\ell'$, the points of  $\ell'\cap X$ are the  $q+1$ distinct intersection points of $\ell'$ with the lines in the star 
  $T_{p_2}X\cap X$.  Thus   each $p'$ in $\ell'\cap X$ lies on  at least two lines of $X$.
  So $\ell'$ is a star chord and meets $X$ in $q+1$ distinct star points.

Next, we show that
$\ell\subset T_{p'}X$ for {all} \, $p'\in \ell'\cap X$, which will establish (iii). As we saw in the preceding paragraph, the star $X\cap T_{p'}X $ contains a line in each of the two stars $T_{p_1}X\cap X$ and  $T_{p_2}X\cap X$. In particular, both $p_1$ and $p_2$ are   in  $T_{p'}X$, so also
$\ell=\overline{p_1p_2}\subset T_{p'}X$.
  
 We now claim $\ell $ meets $X$ in $q+1$ distinct star points. To see this, 
 take an arbitrary  $p'\in \ell'\cap X$. Since $\ell \subset T_{p'}X$  (using (iii)) but $p'\not\in\ell$ (by skewness of $\ell$ and $ \ell'$),  each line in the star $T_{p'}X\cap X$ meets $\ell$ in a distinct point.  These are the $q+1$ points of $X\cap \ell$.
  They are star points because each lies on a line in every other star $T_{p''}X\cap X$  with $p''\in \ell'$.

   The proof will be complete once we have shown that  $\ell'$ is independent of the choice of the star points $p_1$ and $p_2$ on $\ell$.
For this, it suffices to show that  $\ell'\subset T_pX$ for each star point  $p$ on $ \ell$, so  that $\ell' $ is the intersection of all $q+1$ star planes centered along $\ell$ (or any two of them).
But taking any $p'\in \ell'\cap X$, we have seen that $p'$ lies on a line in the star $T_pX\cap X$.
  So the $q+1$ star points of $\ell'$, and hence $\ell'$ itself, are in  $T_pX$.
\end{proof}

 
 \subsection{Symmetry of star chords}

\begin{theorem}\label{StarLineTrans}
The automorphism group of a smooth extremal surface induces a natural {\it transitive}  action  on the set of all its star chords.
\end{theorem}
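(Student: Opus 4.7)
The plan is to combine Proposition~\ref{transStars} with a family of explicit automorphisms fixing a standard star point that acts by translations on the star chords through that point. Given star chords $\ell_1, \ell_2$ on $X$, pick star points $p_i \in \ell_i$. By Proposition~\ref{transStars}, there is an automorphism of $X$ sending $p_1$ to $p_2$, so after applying it we may assume both $\ell_1$ and $\ell_2$ pass through a common star point $p_0$. It therefore suffices to show that the stabilizer $G_{p_0}$ of $p_0$ in $\Aut(X)$ acts transitively on the set of star chords through $p_0$.

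Choose coordinates (again using Proposition~\ref{transStars}) so that $p_0 = [0:0:0:1]$, $T_{p_0}X = \mathbb V(x)$, and $X = \mathbb V(F)$ with $F = x^qw + xw^q + y^qz + z^qy$. By Remark~\ref{caution}, any star chord through $p_0$ is not contained in $\mathbb V(x)$, so its direction can be normalized to $[1:b:c]$ for some $(b,c) \in k^2$. A short computation of $F$ along the line $\ell_{b,c} = \{[s:sb:sc:t]\}$ shows $\ell_{b,c} \cap X$ consists of $q+1$ distinct points for every $(b,c)$; moreover, by Proposition~\ref{HermStar} these intersection points are all star points exactly when $(b,c) \in \mathbb F_{q^2}^2$. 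Thus star chords through $p_0$ are in natural bijection with $\mathbb F_{q^2}^2$.

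For each $(\beta, \gamma) \in \mathbb F_{q^2}^2$, consider the matrix
\[ M_{\beta, \gamma} = \begin{pmatrix} 1 & 0 & 0 & 0 \\ \beta & 1 & 0 & 0 \\ \gamma & 0 & 1 & 0 \\ \delta & -\gamma^q & -\beta^q & 1 \end{pmatrix}, \]
where $\delta \in \mathbb F_{q^2}$ is chosen to satisfy $\delta + \delta^q = -(\beta^q\gamma + \beta\gamma^q)$; such $\delta$ exists because the right-hand side lies in $\mathbb F_q$, by the surjectivity argument in the footnote to Proposition~\ref{HermStar}. A direct expansion shows $F(M_{\beta,\gamma} \vec x) = F(\vec x)$, so $M_{\beta,\gamma} \in \Aut(X)$, and it clearly fixes $p_0$. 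Since $M_{\beta,\gamma}$ sends the direction $(b, c)$ to $(b + \beta, c + \gamma)$, the family $\{M_{\beta,\gamma}\}$ acts transitively on $\mathbb F_{q^2}^2$, hence transitively on the star chords through $p_0$.

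The main bookkeeping is the verification $M_{\beta,\gamma}^* F = F$, which reduces to observing that the cross terms $x^qy$, $x^qz$, $xy^q$, and $xz^q$ all cancel (using $\beta^{q^2} = \beta$ and $\gamma^{q^2} = \gamma$) and that the coefficient $\delta + \delta^q + \beta^q\gamma + \beta\gamma^q$ of $x^{q+1}$ vanishes by the choice of $\delta$. This is a routine calculation, closely parallel to the explicit families of automorphisms built in the proof of Proposition~\ref{transStars}; the conceptual point is that, after normalization, the star chords through a fixed star point form a torsor for the additive group $\mathbb F_{q^2}^2$, and this torsor structure is realized inside $\Aut(X)$.
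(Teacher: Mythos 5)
Your proof is correct and follows essentially the same route as the paper's: after using Proposition~\ref{transStars} to move a star point to $[0:0:0:1]$, your matrices $M_{\beta,\gamma}$ are exactly the paper's change of coordinates $\phi$ (take $\beta=-a$, $\gamma=-b$, $\delta=c^q$, noting $c^q+c=-(a^qb+ab^q)$ since $[1:a:b:c]\in X$), and the verification that they preserve $F$ is the same computation. The only difference is framing: the paper proves the slightly stronger statement that $\Aut(X)$ acts transitively on \emph{ordered pairs of star points spanning a star chord} (which it reuses later in Corollary~\ref{DoubleStarTransCor} and Corollary~\ref{orderAut}), whereas you prove transitivity only on the chords themselves, which suffices for the theorem as stated.
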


\begin{proof}[Proof of Theorem~\ref{StarLineTrans}]
A star chord is determined by two star points not spanning a line on $X$, so any projective linear automorphism of  the surface induces a permutation of the star chords for the surface. 
To prove this action is transitive, it suffices to show that $\Aut(X)$ acts transitively on the set of ordered pairs $(p_1, p_2)$ of star points spanning  star chords.

Fix an arbitrary ordered pair $(p_1, p_2)$ of star points spanning a star chord.
Since  $\Aut(X)$ acts transitively on  star points (Proposition~\ref{transStars}), there is no loss of generality in assuming 
 $$X=\mathbb V(x^qw+w^qx + y^qz+z^qy)  \quad \;\; {\text{ and }}  \quad \;\;  p_1=[0:0:0:1].$$
 The theorem will be proved if we show that, in addition, we can choose coordinates so that 
$
p_2$ is the star point $[1:0:0:0].$  

First note that we can assume that $p_2=[1:a:b:c].$  Indeed, otherwise $p_2\in \mathbb V(x) =  T_{p_1}X$, so that $p_2$ would be in the star centered at $p_1$. In this case, the line 
$\overline{p_1p_2}$ is in that star  and hence on $X$, contrary to the assumption that $p_1$ and $p_2$ span a  star chord. Note also that $a, b, c\in \mathbb F_{q^2}$ (Proposition~\ref{HermStar}).
 
Consider the change of coordinates $\phi$
$$
[x:y:z:w] \overset{\phi}{\mapsto} [x:y-ax: z-bx:  w +c^q x + b^ q y+a^q  z].
$$
Clearly $\phi$ fixes $[0:0:0:1]$ and takes $p=[1:a:b:c]$ to $[1:0:0:0]$. Furthermore, $\phi^*$  fixes the polynomial $x^qw+w^qx+y^qz+z^qy$: Remembering that 
$a^{q^2}=a$, $b^{q^2} = b$,  $c^{q^2}=c$, and $c^q+c=-(a^qb+b^qa)$, we easily verify that
$\phi^*(x^qw+w^qx + y^qz+z^qy)= x^qw+w^qx+y^qz+z^qy.
$
This completes the proof of Theorem~\ref{StarLineTrans}.\end{proof} 

Theorem~\ref{StarLineTrans} has  the following consequence:

\begin{corollary}\label{DoubleStarTransCor}
The automorphism group of a smooth extremal surface $X$ acts transitively on the set of pairs of skew lines on $X$.
\end{corollary}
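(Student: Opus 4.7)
The plan is to reduce via Corollary~\ref{transLines} to transitivity of the stabilizer of a single line, then prove this transitivity by peeling off a flag in an explicit coordinate model.

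Given two ordered pairs of skew lines $(L_1, L_2)$ and $(L_1', L_2')$ on $X$, Corollary~\ref{transLines} supplies $g \in \Aut(X)$ with $g(L_1) = L_1'$. Replacing the first pair by $(g(L_1), g(L_2))$, we may assume $L_1 = L_1'$, so it suffices to show that the stabilizer of $L_1$ in $\Aut(X)$ acts transitively on the $q^4$ lines skew to $L_1$ (Theorem~\ref{starcount}(c)). To parametrize skew lines, I would fix a star point $p \in L_1$ and observe that every line $M$ skew to $L_1$ meets the star plane $T_pX$ at a single star point $p' = T_pX \cap M$ lying on a unique line $N$ in the star at $p$ (necessarily with $N \neq L_1$ and $p' \neq p$); then $M$ itself lies in the star at $p'$ with $M \neq N$.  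This produces a bijection between lines skew to $L_1$ and triples $(N, p', M)$ of the indicated type, the count $q \cdot q^2 \cdot q = q^4$ agreeing with Theorem~\ref{starcount}(c).

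Transitivity then follows from transitivity at each level of the flag $(p, N, p')$: (a) the stabilizer of $L_1$ is transitive on the star points of $L_1$, which is immediate from Corollary~\ref{transLines} applied to pairs $(T_pX, L_1)$ as $p$ varies; (b) the stabilizer of $(L_1, p)$ is transitive on the $q$ lines in the star at $p$ other than $L_1$; (c) the stabilizer of $(L_1, p, N)$ is transitive on the $q^2$ star points on $N$ other than $p$; and (d) the stabilizer of $(L_1, p, N, p')$ is transitive on the $q$ lines in the star at $p'$ other than $N$. I would verify (b)--(d) in the standard model $X = \mathbb V(x^qw + w^qx + y^qz + z^qy)$ with $L_1 = \mathbb V(x, y)$, $p = [0:0:0:1]$, $N = \mathbb V(x, z)$, and $p' = [0:1:0:0]$, by exhibiting explicit one-parameter families of automorphisms that fix the relevant flag:  for (b), the upper-triangular unitary transformations $[x:y:z:w] \mapsto [x:\alpha y:\gamma y + \alpha^{-q} z:w]$ with $\alpha \in \mathbb F_{q^2}^*$ and $\gamma/\alpha$ in the $\mathbb F_q$-subspace $V = \{t \in \mathbb F_{q^2} : t + t^q = 0\}$ of $\mathbb F_{q^2}$, which induce the affine-group action $V \rtimes \mathbb F_q^*$ on the $q$ lines; for (c), the maps $[x:y:z:w] \mapsto [x:y:z+\mu x: w - \mu^q y]$ for $\mu \in \mathbb F_{q^2}$, which translate the $\mathbb F_{q^2}$-parametrization of star points on $N$; and for (d), the additive family $[x:y:z:w] \mapsto [x:y:z:w+\nu x]$ for $\nu \in V$, whose orbit on the line $\mathbb V(z, w)$ sweeps out all $q$ lines in the star at $p'$ distinct from $N$.

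The main obstacle is verifying that enough automorphisms survive the cumulative constraints of fixing $L_1, p, N, p'$ simultaneously. It is the Frobenius-linear structure of $\mathbb F_{q^2}$---in particular the fact that the trace-zero subspace $V$ has exactly $q$ elements, and that the norm map $\mathbb F_{q^2}^* \to \mathbb F_q^*$ is surjective---that produces at each stage a group of automorphisms of exactly the right size to act transitively on the remaining data; the stage-by-stage cardinalities can be cross-checked against $|\Aut(X)| = q^6(q^2-1)(q^3+1)(q^4-1)$ via orbit-stabilizer.
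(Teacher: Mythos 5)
Your proposal is correct, but it takes a genuinely different route from the paper. The paper's proof goes through star chords: it chooses star points $p\in L$ and $p'\in L'$ spanning a star chord (which exist by comparing the $(q^2+1)^2$ lines joining star points of $L$ to star points of $L'$ with the $q^2+1$ of them lying on $X$, via Corollary~\ref{skewpair-connected}), normalizes this ordered pair using Theorem~\ref{StarLineTrans}, observes that $L$ and $L'$ then sit inside the two standard stars $\mathbb V(x,\,y^{q+1}+z^{q+1})$ and $\mathbb V(w,\,y^{q+1}+z^{q+1})$, and finishes with the two-transitivity of the zero-dimensional case (Proposition~\ref{dim0}). You instead reduce to the stabilizer of a single line via Corollary~\ref{transLines}, parametrize the $q^4$ lines skew to $L_1$ by the flag $(N,p',M)$ over a fixed star point $p$, and establish flag-transitivity with explicit families of automorphisms. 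I checked your families: each does preserve $F=x^qw+w^qx+y^qz+z^qy$ exactly (in (b) the condition $\gamma/\alpha\in V$ is precisely what kills the $y^{q+1}$ term, and in (c) the cross terms cancel using $\mu^{q^2}=\mu$), and the induced actions on the parameter sets $V$, $\mathbb F_{q^2}$, and $V$ are respectively the full affine group of $V$, translation by $\mathbb F_{q^2}$, and translation by $V$, all transitive; your bijection between skew lines and flags is also sound, since a line $M$ in the star at $p'$ other than $N$ that met $L_1$ would either create a triangle $L_1,N,M$ (excluded by Corollary~\ref{noTriangle}) or coincide with $N$. Note that your step (a) is not actually needed, since steps (b)--(d) already show the smaller group $\mathrm{Stab}(L_1,p)$ is transitive on the skew lines; it is harmless. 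The trade-off: the paper's argument is shorter because Theorem~\ref{StarLineTrans} has already done the coordinate work and the star-chord framework is reused throughout Sections 4--6, whereas your argument avoids star chords entirely, is self-contained modulo the counting results of Theorem~\ref{starcount}, and yields finer information about stabilizers of partial flags that could feed directly into orbit--stabilizer computations like Corollary~\ref{orderAut}.
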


\begin{proof}[Proof of Corollary~\ref{DoubleStarTransCor}]
Fix an arbitrary pair of skew lines $L$ and $L'$ on an extremal surface $X$. 
It suffices to show that we can choose projective coordinates so that $X$ is defined by the Frobenius form $F=x^{q}w+ w^qx+y^{q+1}+z^{q+1}$ and the two lines  are
$L_a=\mathbb V(x, y-a z)$ and $L_b=\mathbb V(w, y-bz)$  where $a$ and $b$ are distinct fixed ${q+1}$ roots of $-1$. 

We can choose star points  $p\in L$ and  $p'\in L'$    that span a star chord $\ell$.  Indeed, 
 there are $(q^2+1)^2$ lines in $\mathbb P^3$ connecting star points on $L$ to star points on $L'$  (Theorem~\ref{starcount}(a)) but only $q^2+1$ of them lie on $X$  (Corollary~\ref{skewpair-connected}). 

Now  by (the proof of) Theorem~\ref{StarLineTrans}, the automorphism group of an extremal surface acts transitively on ordered pairs of star points spanning star chords. 
So we can choose coordinates so that  the extremal surface is
 $X= \mathbb V(x^qw+w^qx+y^{q+1}+z^{q+1})$,  $p=[0:0:0:1]$, and $p'=[1:0:0:0]$ (note that $\overline{pp'}=\mathbb V(y, z)$, which is not on $X$).
  In this case, the star planes at $p$ and $p'$, respectively, are defined by $x$ and $w$. 
 The line $L$ is therefore  in the star $T_pX\cap X =\mathbb V (x, y^{q+1}+z^{q+1})$
 and $L'$ is  the star $T_{p'}X\cap X =\mathbb V (w, y^{q+1}+z^{q+1})$. In particular, 
  $L=\mathbb V(x, y-\nu_1 z)$ and $L'=\mathbb V(w, y-\nu_2 z)$ where $\nu_1^{q+1}= \nu_2^{q+1}=-1$. 
  The assumption that $L$ and $L'$ are skew means that  $\nu_1 \neq \nu_2$.

Finally, we need a change of coordinates that fixes $x$ and $w$, while taking the forms $\{y-\nu_1 z, y-\nu_2 z\}$ to our chosen ones $\{y-a z, y-b z\}.$
Equivalently, we need  the automorphism group of  the extremal configuration $Y=\mathbb V(y^{q+1}+ z^{q+1})$ in $\mathbb P^1$ to act two-transitively on $Y.$  This is immediate from Proposition~\ref{dim0}.\end{proof}

\subsection{Application: The order of the automorphism group} 
The group $\PU(n, \mathbb F_{q^2})$ is well-studied in representation theory, and its order is classically known \cite[pp131-144]{Dickson}, \cite[19.1.6]{Hirschfeld}. Still, we can give a cute computation of the order of  
$\PU(4, \mathbb F_{q^2})$  as a corollary of Theorem~\ref{StarLineTrans}:

\begin{corollary}\label{orderAut} The  order of the automorphism group  of a smooth extremal surface, and hence the order  of $\PU(4, \mathbb F_{q^2})$, is  $${q^6(q^2-1)(q^3+1)(q^4-1)}.$$
\end{corollary}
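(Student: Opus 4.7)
The plan is to apply orbit--stabilizer to the transitive action, established in the proof of Theorem~\ref{StarLineTrans}, of $\Aut(X)$ on ordered pairs $(p,p')$ of distinct star points of $X$ that span a star chord.

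First, I would count the orbit. By Corollary~\ref{count}(b), $X$ has $(q^3+1)(q^2+1)=q^5+q^3+q^2+1$ star points. For a fixed star point $p$, each of the $q+1$ lines of $X$ through $p$ contains $q^2+1$ star points (Theorem~\ref{starcount}(a)), so $(q+1)q^2+1=q^3+q^2+1$ star points lie on some line of $X$ through $p$ (including $p$ itself). Subtracting, the number of $p'\neq p$ for which $\overline{pp'}$ is a star chord equals $q^5$, so the orbit has size $q^5(q^3+1)(q^2+1)$.

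Next, I would compute the stabilizer. Using the normal form from the proof of Theorem~\ref{StarLineTrans}, we may take $X = \mathbb V(x^qw+w^qx+y^{q+1}+z^{q+1})$, $p=[0{:}0{:}0{:}1]$, and $p'=[1{:}0{:}0{:}0]$. Any stabilizing $g$ fixes $p, p'$ and hence also their tangent star planes $T_pX=\mathbb V(x)$, $T_{p'}X=\mathbb V(w)$; a quick row-and-column analysis forces $g$ to be block-diagonal of the form $\operatorname{diag}(\alpha, M, \delta)$ with $\alpha,\delta\in k^*$ and $M\in\GL_2(k)$ acting on the $(y,z)$-coordinates. The condition $g^*F = \lambda F$ unpacks into $\alpha^q\delta = \alpha\delta^q = \lambda$ (so $\alpha/\delta\in\mathbb F_q^*$) and $(M^{[q]})^\top M = \lambda I_2$, i.e., $M$ preserves the Hermitian form $y^{q+1}+z^{q+1}$ up to the scalar $\lambda$.

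Normalizing $\alpha=1$ modulo $\PGL$ gives $\lambda = \delta \in \mathbb F_q^*$ ($q-1$ choices). For each such $\delta$, the matrix $M$ ranges over the coset $C_\delta\subset\GL_2(\mathbb F_{q^2})$ of $\U(2,\mathbb F_{q^2})$ cut out by $(M^{[q]})^\top M = \delta I$; this coset is non-empty because $M=cI$ lies in it whenever $c^{q+1}=\delta$, which is possible by surjectivity of the norm $\mathbb F_{q^2}^*\to\mathbb F_q^*$. Hence $|C_\delta|=|\U(2,\mathbb F_{q^2})| = (q+1)\,|\PU(2,\mathbb F_{q^2})| = (q+1)\,|\PGL(2,\mathbb F_q)| = q(q-1)(q+1)^2$, the middle equality coming from Theorem~\ref{autogroup} in dimension zero combined with Proposition~\ref{dim0}. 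The stabilizer therefore has order $(q-1)\cdot q(q-1)(q+1)^2 = q(q^2-1)^2$, and orbit--stabilizer yields
$$|\Aut(X)| \;=\; q^5(q^3+1)(q^2+1)\cdot q(q^2-1)^2 \;=\; q^6(q^2-1)(q^3+1)(q^4-1),$$
using $(q^2-1)(q^2+1)=q^4-1$. The main obstacle is the rigidity step: verifying that every stabilizer element is forced into block-diagonal shape, which depends crucially on the pair $(p,p')$ being \emph{ordered}, so that no "anti-diagonal" automorphism swapping $p$ with $p'$ can contribute.
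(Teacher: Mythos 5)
Your proposal is correct and follows essentially the same route as the paper: orbit–stabilizer for the transitive action on ordered pairs of star points spanning a star chord, with the same orbit size $q^5(q^3+1)(q^2+1)$ and the same block-diagonal analysis yielding a stabilizer of order $q(q^2-1)^2$. The only differences are cosmetic (you count the stabilizer as $q-1$ cosets of $\U(2,\mathbb F_{q^2})$ after normalizing $\alpha=1$, whereas the paper realizes it as a quotient of $\mathbb F_{q^2}^*\times\U(2,\mathbb F_{q^2})$ by a cyclic group of order $q+1$).
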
 

\begin{proof}[Proof of Corollary~\ref{orderAut}] Fix a smooth extremal surface $X$.  
We compute the order of  $\Aut(X)$ using the orbit-stabilizer theorem for its action on the set $\mathcal S = \{(p, p') \; | \; p, p' {\text{ star points, }} \overline{p p'}\not\subset X\} $ of ordered pairs
 of star points spanning a star chord. This action is transitive by (the proof of) Theorem~\ref{StarLineTrans}. 

The cardinality of the orbit $\mathcal S$ is $q^5(q^3 + 1)(q^2+1)$. Indeed, there are $ (q^3 + 1)(q^2+1)$  star points on $X$  (Corollary~\ref{count}), 
so it suffices to show that for each  choice of star point $p$,  there are $q^5$ star points $p'$ such that $\overline{pp'}$ is a star chord. For this, note that  of the total $q^5+q^3+q^2$ star points  other than $p$, 
there  are  $q^3+q^2$  in the  star plane centered at $p$ (Corollary~\ref{StarPointsOnStarPlane}). This leaves $q^5$ star points $p'$ such that $\overline{p p'}$ is a star chord, establishing that the cardinality of $\mathcal S$ is as claimed. 

  Corollary~\ref{orderAut} then follows immediately from the following computation:
\begin{lemma}
The stabilizer of an ordered pair of star points $(p_1, p_2)$ spanning a star chord  on a smooth  extremal surface is isomorphic to 
\begin{equation}\label{stabilizer}
 \mathbb F_{q^2}^* \times \U(2, \mathbb F_{q^2}) \, / \, \{(\lambda, \lambda I_2)\;\; | \;\; \lambda^{q+1}=1\},
\end{equation}
and has order ${q(q^2-1)^2}$.
\end{lemma}

\begin{proof}[Proof of Lemma] We may assume that 
$
X=\mathbb V(x^qw+ y^{q}z+z^{q}y + w^qx), 
 p_1=[1:0:0:0],  $ and $ p_2=[0:0:0:1].$
 Then 
any automorphism in the stabilizer of  $(p_1, p_2)$ stabilizes also the tangent planes at $p_1$ and $p_2$ so is
represented by a matrix of the form 
$$
\begin{bmatrix} a_{11} & 0 & 0 & 0 \\
0 & a_{22} & a_{23} & 0 \\
0 & a_{32} & a_{33} & 0 \\
0 & 0 & 0& a_{44} \\
\end{bmatrix}
$$ fixing $x^qw+ y^{q}z+z^{q}y + w^qx.$ In particular, 
 the  submatrix 
$h = \begin{bmatrix}
 a_{22} & a_{23} \\
 a_{32} & a_{33}  \\
\end{bmatrix} $ 
fixes $y^{q}z+z^{q}y$, so is in $ \U(2, \mathbb F_{q^2})$.
Furthermore,  because $g$ fixes  $x^qw+w^qx$,  we get 
 $a_{11}^{q^2} = a_{11}$ and $ a_{44}^{q^2}=a_{44} = \frac{1}{a_{11}^q}$. 
Thus there is a natural surjective map
$$
  \mathbb F_{q^2}^* \times \U(2, \mathbb F_{q^2})  \rightarrow  {\text{Stab}} \{p, p'\} \qquad (a_{11},  h) \mapsto [g] =  
  \begin{bmatrix} a_{11} & 0 & 0 & 0 \\
0 & a_{22} & a_{23} & 0 \\
0 & a_{32} & a_{33} & 0 \\
0 & 0 & 0& a_{11}^{-q} \\
\end{bmatrix},
$$
and we
easily compute that the kernel is  $\{(\lambda, \lambda I_2)\;\; | \;\; \lambda^{q+1}=1\}$. 
So (invoking  Remark~\ref{Dim0}), 
 the order of the stabilizer of $(p_1, p_2)$ is
\[
\frac{|\mathbb F^*_{q^2}|\ |  \U(2, \mathbb F_{q^2})| } {|\mu_{q+1}|}  =
|\mathbb F^*_{q^2}| |\PU(2, \mathbb F_{q^2})| =   (q^2-1) |\PGL(2, \mathbb F_q)|  = q(q^2-1)^2.
\]

\end{proof}
\end{proof}



 \section{Quadric Configurations}

Extremal surfaces contain interesting line configurations we call {\bf quadric configurations:}

\noindent
\begin{minipage}{0.7\textwidth}
\begin{definition} A \textbf{quadric configuration} on a surface  of degree $d\geq 3$  in projective three space  is a collection of $2d$ lines on the surface  consisting of two sets of $d$ skew lines with the property that each line in either set meets every line of the other set.  
\end{definition}
\end{minipage}
\hfill
\begin{minipage}{0.3\textwidth}
\vskip .4cm
\hskip 1cm \includegraphics{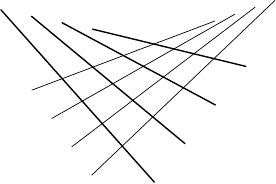}

\captionof*{figure}{\tiny{$d=4$}}
\label{fig:deg4QuadConfig}

\end{minipage}

The next proposition justifies the name:

\begin{proposition}  A quadric configuration on an irreducible  surface $X$   is equal to $X\cap Q$ for some unique smooth quadric surface $Q$.
\end{proposition}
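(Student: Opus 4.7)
Here is my plan of attack.

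The strategy is to produce a smooth quadric surface $Q \subset \mathbb{P}^3$ that contains all $2d$ lines of the quadric configuration, and then use degrees to conclude $Q \cap X$ is exactly this union. Write the configuration as $\mathcal{L} = \{L_1,\dots,L_d\}$ and $\mathcal{M} = \{M_1,\dots,M_d\}$, where the $L_i$ are mutually skew, the $M_j$ are mutually skew, and each $L_i$ meets each $M_j$.

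The first step is to invoke the classical fact that any three pairwise skew lines in $\mathbb{P}^3$ are contained in a unique smooth quadric surface, whose two rulings consist of (i) all lines in the pencil generated by the three, and (ii) all transversals to the three. Apply this to $L_1, L_2, L_3 \in \mathcal{L}$ to produce a smooth quadric $Q$ with $L_1,L_2,L_3$ in one ruling. Next I would argue every other line of the configuration lies on $Q$: each $M_j$ is a transversal to $L_1, L_2, L_3$, hence lies in the opposite ruling of $Q$; and for $i \geq 4$, the line $L_i$ meets $M_1, M_2, M_3 \subset Q$, so $L_i \cap Q$ contains at least three points, forcing $L_i \subset Q$ since $L_i \cap Q$ is cut out by a degree $2$ form on $L_i \cong \mathbb{P}^1$. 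Hence all $2d$ lines lie on $Q$.

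Now I would finish with a degree computation. Since $X$ is irreducible of degree $d \geq 3$ and $Q$ has degree $2$, we have $Q \not\subset X$, so the scheme-theoretic intersection $Q \cap X$ is a curve of degree $2d$ on $Q$. But this intersection contains the reduced union of the $2d$ distinct lines of the configuration, which already has degree $2d$. Therefore $Q \cap X$ equals the quadric configuration as a divisor on $Q$ (in particular it is reduced, and no further lines of $X$ lie on $Q$).

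For uniqueness, if $Q'$ is any smooth quadric surface with $Q' \cap X$ equal to the configuration, then $Q'$ contains the three skew lines $L_1, L_2, L_3$, so $Q' = Q$ by the uniqueness clause of the classical fact quoted above. The main obstacle here is just setting up the classical "three skew lines determine a unique smooth quadric" result cleanly; everything else is degree bookkeeping and an application of the transversal-forces-containment trick for lines meeting $Q$ in three points.
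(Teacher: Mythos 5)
Your proposal is correct and follows essentially the same route as the paper: take the unique smooth quadric $Q$ through three skew lines of one ruling, force the remaining $2d-3$ lines onto $Q$ via the "three intersection points" argument, and conclude by comparing degrees of $X\cap Q$ and the configuration. The only addition is your explicit uniqueness paragraph, which the paper leaves implicit in the uniqueness of the quadric through three skew lines.
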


\begin{proof}
Let  $\mathcal L \cup \mathcal M$ be a configuration of lines, where  $\mathcal L$ (respectively $\mathcal M$) consists of $d$ skew lines   intersecting every line in $\mathcal M$ (respectively $\mathcal L$).     Choose any three skew lines  $L_1, L_2, L_3\in \mathcal L$, and let  $Q$ be the unique smooth quadric they determine \cite[2.12]{Harris}. 
The lines of $\mathcal M$ intersect {\it all  } lines in $\mathcal L$, including  $L_1$, $L_2$, and $L_3$, which lie on $Q$. So each line  $M\in \mathcal M$ intersects the quadric $Q$ in at least three points, which means $M\subset Q$. But now each line $L\in \mathcal L$ intersects all  lines in $\mathcal M$, so $L$ intersects $Q$ in at least three points. Again, we conclude $L\subset Q$.  So $\mathcal L \cup \mathcal M\subset Q$. 

Now if $\mathcal L \cup \mathcal M \subset X$, then $\mathcal L\cup \mathcal M \subset X\cap Q$. So since $X\cap Q$ and $\mathcal L \cup \mathcal M $ both have degree $2d$, and $X\cap Q$ is a complete intersection, we conclude that $X\cap Q$ is precisely the reduced union of the $2d$ lines in 
 $\mathcal L\cup \mathcal M$.\end{proof}

\begin{example} \label{exQuadric2} Let $Q_{\mu}$ be   the quadric surface  $Q_{\mu}=\mathbb V(\mu xw-yz)$, where
$\mu\in k$ is a fixed $(q+1)$-st root of unity. The quadric  $Q_{\mu}$ defines a quadric configuration on the Fermat extremal surface. 
 Indeed, 
the lines in the sets  
$$
\begin{aligned}
\mathcal L_{\mu} &= \{\mathbb V(x-\alpha y,  z-\mu \alpha w) \;\; | \;\; \alpha^{q+1} = -1\}\\
 \mathcal M_{\mu} &=  \{\mathbb V(x-\beta z,  y-\mu \beta  w) \;\; | \;\; \beta^{q+1} = -1 \}
 \end{aligned}
$$
 all lie on the quadric $Q_{\mu}$ (with the lines in $\mathcal L_{\mu}$ and $\mathcal M_{\mu}$ in opposite rulings),   as well as on the extremal surface $X=\mathbb V(x^{q+1}+y^{q+1}+z^{q+1}+w^{q+1})$. Thus 
 $X\cap Q_{\mu}$ is the quadric configuration $\mathcal L_{\mu} \cup
 \mathcal M_{\mu}.$
 \end{example}

Quadric configurations are rare on an arbitrary surface---for example, a generic  surface of degree greater than three  admits {\it no lines at all} \cite[12.8]{Harris}.
Remarkably, extremal surfaces contain  many quadric configurations:

\begin{theorem}\label{TripleQuads} Any triple of skew lines on a smooth extremal surface determines a unique quadric configuration.
\end{theorem}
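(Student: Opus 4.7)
Three skew lines in $\mathbb{P}^3$ lie on a \emph{unique} smooth quadric $Q$ by \cite[2.12]{Harris}, so any quadric configuration on $X$ containing $L_1, L_2, L_3$ must equal $X \cap Q$ for this $Q$, giving uniqueness. The real work is existence: showing that the complete intersection $X \cap Q$, of total degree $2d$, is a reduced union of $2d$ lines on $Q$ split equally between the two rulings.

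My plan is to begin with Corollary~\ref{DoubleStarTransCor} to place the first two lines in standard position: after a linear change of coordinates, assume $X = \mathbb{V}(x^q w + w^q x + y^{q+1} + z^{q+1})$ with $L_1 = \mathbb{V}(x, y - \alpha z)$ and $L_2 = \mathbb{V}(w, y - \beta z)$ for distinct $(q+1)$-st roots $\alpha, \beta$ of $-1$. I would then use the stabilizer of $(L_1, L_2)$ in $\Aut(X)$ to simplify the third skew line $L_3$, after which the unique smooth quadric $Q$ through $L_1, L_2, L_3$ (classically the union of their common transversals) can be written down explicitly.

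Identifying $Q \cong \mathbb{P}^1 \times \mathbb{P}^1$, I would restrict $F$ to $Q$ as a bidegree $(d, d)$ form. Since $F|_Q$ vanishes on the three $(1,0)$-lines $L_1, L_2, L_3$, these factor out of $F|_Q$ to leave a residual of bidegree $(d-3, d)$, and the task is to show that this residual splits further as a product of $d-3$ forms of bidegree $(1,0)$ and $d$ forms of bidegree $(0,1)$. Equivalently, letting $\mathcal{T}$ denote the set of lines on $X$ meeting all three of $L_1, L_2, L_3$, each line in $\mathcal{T}$ lies on $Q$ (by Bezout applied to the three intersection points it shares with $Q$) in the ruling opposite to the $L_i$'s, and the goal becomes to show that $|\mathcal{T}| = d = q+1$.

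The main obstacle is proving this count. Corollary~\ref{skewpair-connected} gives $q^2+1$ transversals on $X$ for any pair of skew lines, and the third line $L_3$ must cut this down to exactly $q+1$. I expect $\mathcal{T}$ to arise as the $q+1$ roots of a separable polynomial — an analogue of the polynomial $t^{q^2}-t$ appearing in the proof of Theorem~\ref{starcount}(a) — produced by the Frobenius structure of $F$, which is where extremality enters essentially. Once $|\mathcal{T}| = d$ is established, the residual of $X \cap Q$ after removing the $d+3$ identified lines has bidegree $(d-3, 0)$ and, by reducedness arguments using smoothness of $X$, consists of $d-3$ distinct lines in the original ruling, completing the $2d$ lines of the quadric configuration.
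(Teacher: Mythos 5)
Your strategy coincides with the paper's (normalize the first two lines via Corollary~\ref{DoubleStarTransCor}, write down the unique quadric $Q$ through the three lines, count the lines of $X$ in each ruling of $Q$), and your uniqueness argument is correct. But there is a genuine gap at exactly the step you yourself flag as ``the main obstacle'': you never produce the separable polynomial whose roots give the $q+1$ transversals, and that computation is the entire content of the theorem. The way it actually goes: with $X=\mathbb V(x^qw+w^qx+y^qz+z^qy)$, $L_1=\mathbb V(x,y)$, $L_2=\mathbb V(z,w)$, the third skew line is $x=az+bw$, $y=cz+dw$ with $\left[\begin{smallmatrix} a&b\\ c&d\end{smallmatrix}\right]$ invertible; the condition $L_3\subset X$ forces $c^q+c=b^q+b=a^q+d=a+d^q=0$; the quadric is $\mathbb V(cxz+dxw-ayz-byw)$; and substituting the Segre parametrization of a line of the opposite ruling into $F$ collapses, using those identities, to the single equation $c\lambda_1^{q+1}+d\lambda_1^q\lambda_2+a\lambda_1\lambda_2^q+b\lambda_2^{q+1}=0$. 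This is a Frobenius form in $\lambda_1,\lambda_2$ whose matrix $\left[\begin{smallmatrix} c&d\\ a&b\end{smallmatrix}\right]$ is full rank precisely because $L_3$ is a line, so it has exactly $q+1$ distinct roots in $\mathbb P^1$. Without exhibiting this (or an equivalent) computation, $|\mathcal T|=q+1$ is an unsupported hope. Also, the preliminary step of ``using the stabilizer of $(L_1,L_2)$ to simplify $L_3$'' is unnecessary and would itself require proving a transitivity statement that is not established anywhere (transitivity on triples is Theorem~\ref{TripleTrans}, which is proved \emph{using} this theorem, so you cannot invoke it).

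A second, smaller gap is your final step: you assert that the residual bidegree-$(d-3,0)$ part of $F|_Q$ consists of $d-3$ distinct new lines ``by reducedness arguments using smoothness of $X$.'' Reducedness of $X\cap Q$ does not follow from smoothness of $X$ alone (the reducedness fact quoted from Zak applies to \emph{plane} sections), and in the paper reducedness of $X\cap Q$ is an output of the count, not an input: one runs the same substitution for the lines of the ruling containing $L_1,L_2,L_3$, where the four resulting conditions all reduce to $\lambda_1^q\lambda_2-\lambda_1\lambda_2^q=0$, again with exactly $q+1$ distinct solutions. Having $q+1$ distinct lines in each ruling accounts for the full degree $2d$ of $X\cap Q$ and yields the quadric configuration directly; I would replace your residual-factorization step with this symmetric computation.
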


\begin{proof}[Proof]
Fix three skew lines, $L, L'$, and $L''$ on the extremal surface $X$ of degree $d=q+1$. Without loss of generality, assume
$X$ is defined by the  form $x^qw+w^qx+y^qz+z^qy$, $L$ by $x=y=0$, and $L'$ by $z=w=0$ (Corollary~\ref{DoubleStarTransCor}).
In this case,  $L''$ can be defined by linear equations of the form
$$
x= az+bw\qquad y = c z + d w,
$$ where the matrix 
$\begin{bmatrix}
a & b \\ c & d\end{bmatrix}
$
is full rank, and is parametrized as
$ \{ [as+bt:  c s + d t : s : t ] \;\; | \;\; [s: t] \in \mathbb P^1\}.
$
Furthermore, the condition that $L''$ lies on $X$ means that
$$
(as+bt)^qt + t^q(as+bt) + (c s + d t)^qs + s^q(c s + d t) = 0$$
for all $s, t$. This 
imposes the constraints
\begin{equation}\label{onX}
c^q+c = b^q + b = a^q+d = a+ d^q =0.
\end{equation}

The quadric $Q$ defined by
\begin{equation}\label{quadric}
c xz + d xw  -a y z -b yw
\end{equation}
contains $L$, $L'$, and $L''$. Note that $Q$ is the image of the 
 Segre map  
$$
\mathbb P^1 \times \mathbb P^1 \overset{\sigma}\rightarrow \mathbb P^3
\qquad
([s_1:s_2], [t_1, t_2]) \mapsto [(as_1+bs_2)t_1: (cs_1+ds_2)t_1 : s_1t_2:s_2t_2].
$$

Now, consider an arbitrary  line   in one of the rulings on $Q$,  say
$$
\ell= 
\{[(a\lambda_1 +b\lambda _2)t_1: (c\lambda_1+d\lambda_2)t_1 : \lambda_1t_2:\lambda_2t_2]\quad | \quad [t_1:t_2]\in \mathbb P^1\}.
$$
The line $\ell$ is on $X$ if and only if,  plugging   into the Frobenius form defining $X$, the form 
\begin{equation}\label{seg2}
\lambda_2(a\lambda_1 +b\lambda _2)^qt_1^{q}t_2 + 
\lambda_2^q(a\lambda_1 +b\lambda _2)t_1t_2^{q} 
+ \lambda_1(c\lambda_1+d\lambda_2)^qt_1^{q}t_2 +
 \lambda_1^q(c\lambda_1 +d\lambda _2)t_1t_2^{q},
\end{equation}
 is 
uniformly zero for all values of $t_1, t_2$.  Equivalently,  $\ell$ is on $X$ precisely when  the coefficients of $t_1^{q}t_2$ and of $t_1t_2^{q}$  in expression~(\ref{seg2}) satisfy
\[
\begin{aligned}
\lambda_2(a\lambda_1 +b\lambda _2)^q + \lambda_1(c\lambda_1+d\lambda_2)^q &=0\\
\lambda_2^q(a\lambda_1 +b\lambda _2) +  \lambda_1^q(c\lambda_1 +d\lambda _2)&=0.\\
\end{aligned}
\]
In light of the constraints~(\ref{onX}), these equations simplify to 
\begin{equation}\label{same}
\begin{aligned}
c\lambda_1^{q+1} + d\lambda_1^q\lambda_2 + a\lambda_1 \lambda_2^q + b \lambda_2^{q+1}&=0\\
\end{aligned}
\end{equation}
Because the form in (\ref{same}) is a  Frobenius form in $\lambda_1, \lambda_2$ with  the full rank matrix
$
\begin{bmatrix}
c & d\\
a & b
\end{bmatrix},
$
 there are precisely $q+1$ distinct solutions to (\ref{same}) in $\mathbb P^1$. We conclude that there are precisely $q+1$ lines $\ell$ of the form $\sigma([\lambda_1:\lambda_2]\times \mathbb P^1)$
 lying on both $X$ and $Q$. These are $q+1$ different skew lines on the extremal surface.

Now consider a line in the other ruling, say 
\[
m = \{[(as_1+bs_2)\lambda_1: (cs_1+ds_2)\lambda_1 : s_1\lambda_2:s_2\lambda_2] \;\; | \;\; [s_1:s_2]\in \mathbb P^1\}.
\]
 The line $m$  lies on $X$ if and only if 
 \[
\lambda_1^q \lambda_2 (as_1+bs_2)^qs_2 + \lambda_1\lambda_2^q(as_1+bs_2)s_2^q +  \lambda_1^q\lambda_2(cs_1+ds_2)^qs_1 +  \lambda_1\lambda_2^q(cs_1+ds_2)s_1^q = 0
\]
for all values of $s_1, s_2$. That is, $
m\subset X$ if and only if
\[
c^q\lambda_1^q\lambda_2 + c \lambda_1\lambda_2^q = 
a^q\lambda_1^q\lambda_2 +  d \lambda_1\lambda_2^q = 
a\lambda_1\lambda_2^q  + d^q \lambda_1^q\lambda_2 
=b^q\lambda_1^q\lambda_2 + b \lambda_1\lambda_2^q=0.
\]
Again making use of the relations~(\ref{onX}), these four equations all boil down to one,
\begin{equation}\label{only}
\lambda_1^q\lambda_2 -\lambda_1\lambda_2^q.
\end{equation}
Because there are exactly $q+1$ points $[\lambda_1:\lambda_2] \in \mathbb P^1$ satisfying (\ref{only}), 
there are precisely $q+1$ lines $m$ in this ruling of $Q$ which lie $X$. These form  a set of $q+1$ skew lines, each of which meets every line in the other set of $q+1$ skew lines on $X$.
 \end{proof}
 
\begin{remark}
In the finite geometry setting, Hirschfeld proves an analog of Theorem~\ref{TripleQuads} for Hermitian geometries using different techniques and language
\cite[19.3.1]{Hirschfeld}.
\end{remark}


\subsection{Symmetry of Quadric Configurations}

\begin{theorem} \label{TransQuad} The automorphism group of a smooth extremal surface  acts transitively on its set of  quadric configurations.
\end{theorem}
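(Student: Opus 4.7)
\textbf{Proof plan for Theorem~\ref{TransQuad}.}

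The plan is to combine Theorem~\ref{TripleQuads} with a transitivity statement on triples of pairwise skew lines. By Theorem~\ref{TripleQuads}, each triple of pairwise skew lines on $X$ lies in a unique quadric configuration, and conversely every quadric configuration $\mathcal Q$ contains many such triples (any three distinct lines drawn from one of its two rulings). Consequently, if $\Aut(X)$ acts transitively on ordered triples of pairwise skew lines on $X$, then for any two quadric configurations $\mathcal Q_1$ and $\mathcal Q_2$, an automorphism that carries a triple from $\mathcal Q_1$ onto a triple from $\mathcal Q_2$ must also carry $\mathcal Q_1$ to $\mathcal Q_2$ by the uniqueness assertion of Theorem~\ref{TripleQuads}. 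It therefore suffices to prove transitivity of $\Aut(X)$ on ordered triples of pairwise skew lines.

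To prove this, I would bootstrap from Corollary~\ref{DoubleStarTransCor}. Given two ordered triples $(L_1,L_2,L_3)$ and $(L_1',L_2',L_3')$ of pairwise skew lines on $X$, Corollary~\ref{DoubleStarTransCor} supplies an automorphism of $X$ sending $(L_1,L_2)$ to $(L_1',L_2')$; after applying it, we may assume $L_1=L_1'$ and $L_2=L_2'$. The remaining task is exactly the following local statement: the stabilizer in $\Aut(X)$ of an ordered pair of skew lines $(L_1,L_2)$ on $X$ acts transitively on the set of lines of $X$ skew to both $L_1$ and $L_2$. To establish this, I would place the pair in the normal form of the proof of Corollary~\ref{DoubleStarTransCor}: take $X=\mathbb V(x^qw+w^qx+y^{q+1}+z^{q+1})$ with $L_1=\mathbb V(x,y-az)$ and $L_2=\mathbb V(w,y-bz)$ for distinct $(q+1)$-st roots of $-1$. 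An element of $\Aut(X)$ stabilizing this ordered pair must preserve the star planes $\mathbb V(x)$ and $\mathbb V(w)$ and the two star points $[0\!:\!0\!:\!0\!:\!1]$ and $[1\!:\!0\!:\!0\!:\!0]$, so by the block-diagonal analysis from the lemma following Corollary~\ref{orderAut} it is represented by a block-diagonal matrix---with scalar blocks on $x$ and $w$, and a $2\times 2$ unitary block on $(y,z)$ preserving $y^{q+1}+z^{q+1}$---that additionally fixes each of the linear forms $y-az$ and $y-bz$ up to scalar. The lines of $X$ skew to both $L_1$ and $L_2$ would then be parametrized by Frobenius-linear $\mathbb F_{q^2}$-constraints analogous to equation~(\ref{onX}) in the proof of Theorem~\ref{TripleQuads}, and I would verify by direct action that this stabilizer acts transitively on the resulting parameter space.

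The main obstacle is precisely this last explicit transitivity check on the parameter space of lines skew to $(L_1,L_2)$. As a sanity check, and likely a cleaner route in practice, one can avoid the ad hoc matrix work by an orbit--stabilizer count: compute the total number of ordered triples of pairwise skew lines on $X$ using Theorem~\ref{starcount} together with Corollary~\ref{skewpair-connected}, compute $|\Aut(X)|$ via Corollary~\ref{orderAut}, identify the expected stabilizer of such a triple as an explicit subgroup of the pair-stabilizer just described, and verify numerically that $|\Aut(X)|$ divided by the stabilizer order equals the total number of triples. This equality would force the triple-action to have a single orbit, giving the desired transitivity and hence Theorem~\ref{TransQuad}.
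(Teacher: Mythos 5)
Your opening reduction is exactly the paper's: Theorem~\ref{TransQuad} follows from transitivity of $\Aut(X)$ on ordered triples of pairwise skew lines, because Theorem~\ref{TripleQuads} makes the quadric configuration a function of any such triple. The gap is that you have not actually proved that transitivity, and both of your proposed completions are incomplete where it matters. Route~1 reduces to showing the stabilizer of an ordered pair of skew lines acts transitively on the $q(q-1)(q^2+1)$ lines skew to both, and you explicitly flag this as the unverified "main obstacle." Route~2 has a directional error: exhibiting an explicit subgroup $H$ of the stabilizer of a triple only gives $|\mathrm{Stab}|\geq |H|$, hence an \emph{upper} bound $|G|/|H|$ on the orbit size; the equality $|G|/|H| = \#\{\text{triples}\}$ then tells you nothing, since the orbit is a subset of all triples anyway. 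To force a single orbit you need an upper bound on the stabilizer (equivalently, a proof that $H$ is the \emph{whole} stabilizer), which is precisely the hard part you have not done.

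The paper's device for avoiding any stabilizer computation is worth internalizing: instead of triples, it counts ordered sextuples $(L_1,L_2,L_3,M_1,M_2,M_3)$ where the $M_j$ are three of the $q+1$ lines in the opposite ruling of the quadric determined by the $L_i$. The nine intersection points $L_i\cap M_j$ contain five points in general linear position, so the stabilizer of a sextuple, even in $\PGL(4,k)$, is trivial; hence the orbit of any sextuple has size exactly $|\Aut(X)|$. The count of all sextuples (using Corollary~\ref{count}, Theorem~\ref{starcount}, Corollary~\ref{skewpair-connected}, and Theorem~\ref{TripleQuads} for the $(q+1)q(q-1)$ choices of the $M_j$) comes out to $q^6(q^4-1)(q^3+1)(q^2-1)$, which is precisely $|\Aut(X)|$ by Corollary~\ref{orderAut}, so the action on sextuples, and a fortiori on triples, is transitive. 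If you want to salvage your Route~2, you should adopt this enlargement of the objects being counted so that the stabilizer is visibly trivial; otherwise you must carry out the exact stabilizer computation (its order is $q(q^2-1)$) that you have only gestured at.
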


 In light of Theorem~\ref{TripleQuads},  Theorem~\ref{TransQuad} is an immediate consequence of the following:  
  \begin{theorem}\label{TripleTrans}
The automorphism group of a smooth extremal surface $X$  acts transitively on the set of triples of skew lines on $X$.
\end{theorem}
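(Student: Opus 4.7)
The plan is to reduce the theorem to a classical transitivity statement about Hermitian forms over $\mathbb F_{q^2}$. By Corollary~\ref{DoubleStarTransCor}, $\Aut(X)$ already acts transitively on ordered pairs of skew lines, so given two triples of pairwise skew lines I may apply an automorphism to assume the first two lines agree. It then suffices to show that the stabilizer of a fixed pair of skew lines acts transitively on the set of lines on $X$ skew to both.

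Normalize so that $X=\mathbb V(x^qw+w^qx+y^qz+z^qy)$, $L_1=\mathbb V(x,y)$, and $L_2=\mathbb V(z,w)$. Any automorphism stabilizing both lines must be block diagonal in the decomposition $(x,y)\oplus(z,w)$, that is, $g=\mathrm{diag}(P,Q)$ with $P,Q\in \GL_2(k)$. The defining matrix of $F$ is antidiagonal in block form with $J=\left(\begin{smallmatrix}0&1\\1&0\end{smallmatrix}\right)$ in the two antidiagonal blocks; writing out $g^*F=\lambda F$ via formula~(\ref{action}) yields $(P^{[q]})^\top J Q=\lambda J$. After rescaling to $\lambda=1$, the stabilizer (modulo scalars) is the group of such $g$ with $P\in \GL_2(\mathbb F_{q^2})$ arbitrary and $Q=J\,((P^{[q]})^\top)^{-1}\,J$ determined by $P$.

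Next I parametrize the lines on $X$ skew to both $L_1$ and $L_2$: each such line is the image of $[s:t]\mapsto[M(s,t)^\top:(s,t)^\top]$ for a unique $M\in \GL_2(k)$, and substituting into $F$ exactly as in the proof of Theorem~\ref{TripleQuads} shows $L_M\subset X$ precisely when $M=\left(\begin{smallmatrix}a&b\\c&-a^q\end{smallmatrix}\right)$ with $b+b^q=c+c^q=0$. These are exactly the conditions that make $N:=JM$ an invertible \emph{anti-Hermitian} matrix over $\mathbb F_{q^2}$, i.e.\ $(N^{[q]})^\top=-N$. A short calculation shows that the stabilizer action $L_M\mapsto L_{PMQ^{-1}}$ translates, through $N=JM$ and $R:=JPJ$, into the standard congruence action $N\mapsto RN(R^{[q]})^\top$ with $R$ ranging over all of $\GL_2(\mathbb F_{q^2})$.

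The theorem thus reduces to transitivity of $\GL_2(\mathbb F_{q^2})$, acting by congruence, on invertible anti-Hermitian $2\times 2$ matrices. Multiplying by any fixed nonzero $\alpha\in \mathbb F_{q^2}$ with $\alpha^q=-\alpha$ (such $\alpha$ exists since the trace $\mathbb F_{q^2}\to\mathbb F_q$ has nontrivial kernel; in characteristic two anti-Hermitian coincides with Hermitian and no rescaling is needed) carries this equivariantly to the congruence action on invertible Hermitian matrices, whose transitivity is the classical fact that non-degenerate Hermitian forms over a finite field of square order form a single congruence class (cf.~\cite[4.1]{BC} and the discussion following Theorem~\ref{smooth}). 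I expect the main obstacle to be bookkeeping the several $J$-twists when computing the stabilizer action in block-matrix form; once those twists are tracked correctly, the theorem is an immediate corollary of the classification of Hermitian forms.
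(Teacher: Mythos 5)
Your proof plan is correct, but it reaches the theorem by a genuinely different route than the paper. The paper's argument is purely a count: it enumerates the ordered sextuples $(L_1,L_2,L_3,M_1,M_2,M_3)$ arising from quadric configurations, shows the stabilizer of such a sextuple is trivial (five of the nine intersection points are in general linear position), and observes that the number of sextuples equals $|\Aut(X)|$ from Corollary~\ref{orderAut}, so orbit--stabilizer forces transitivity. You instead reduce via Corollary~\ref{DoubleStarTransCor} (which is proved earlier and independently, so there is no circularity) to showing that the stabilizer of an ordered skew pair acts transitively on the lines skew to both, and you identify that action with the congruence action $N\mapsto RN(R^{[q]})^{\top}$ of $\GL_2(\mathbb F_{q^2})$ on invertible anti-Hermitian $2\times 2$ matrices; transitivity is then the rank classification of Hermitian forms cited in \S\ref{Herm}. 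I checked the computational claims: the matrix of $x^qw+w^qx+y^qz+z^qy$ is $\left(\begin{smallmatrix}0&J\\ J&0\end{smallmatrix}\right)$, the constraints $c^q+c=b^q+b=a^q+d=a+d^q=0$ from the proof of Theorem~\ref{TripleQuads} say exactly that $N=JM$ is invertible anti-Hermitian over $\mathbb F_{q^2}$, the relation $(P^{[q]})^\top JQ=J$ together with its $q$-th-power transpose does force $P\in\GL_2(\mathbb F_{q^2})$, and the twist $R=JPJ$ produces the stated congruence action. Two things worth making explicit in a write-up: block-diagonality of the stabilizer uses that the pair is \emph{ordered} (a swap of $L_1$ and $L_2$ would be block anti-diagonal), and you only need the subgroup of the stabilizer you exhibit, not its full determination. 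The trade-off between the two approaches: the paper's count yields simple transitivity on sextuples, which it reuses to prove Corollary~\ref{QuadricCount}, but it depends on the order formula of Corollary~\ref{orderAut} and the earlier line counts; your argument is independent of all of that and gives a structural explanation --- the third line of a skew triple is a nondegenerate $2\times 2$ Hermitian form in disguise --- squarely in the spirit of the paper's Hermitian-geometry theme.
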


\begin{proof}[Proof of Theorem~\ref{TripleTrans}]
It suffices to show that $\Aut(X)$ acts transitively on the set $\mathcal S$  of all ordered sextuples $(L_1, L_2, L_3, M_1, M_2, M_3)$ of lines on $X$,
 consisting of two triples of skew lines $ \{L_1, L_2, L_3\}$ and $\{M_1, M_2, M_3\}$ with $L_i\cap M_j\neq \emptyset$ for all $i, j$. 
  
 Fix  an ordered  sextuple $(L_1, L_2, L_3, M_1, M_2, M_3)\in \mathcal S$. First note that its stabilizer,  even in $\PGL(4, k)$, is trivial. Indeed, the intersection points $p_{ij}=L_i\cap M_j$ must be fixed by any element in the stabilizer of $(L_1, L_2, L_3, M_1, M_2, M_3)$. These nine points contain five points in general linear position (no three on a line, no four on a plane). But an automorphism of $\mathbb P^3$ fixing five points in  general linear position is  trivial. 
 
 Next, we compute the cardinality of $\mathcal S$. There are  $(q^3+1)(q+1)$ choices for $L_1$ by Corollary~\ref{count}(a), and fixing $L_1$, there are $q^4$ choices for a skew line $L_2$ on $X$ by Theorem~\ref{starcount}(c). The number of choices for $L_3$ is the total number of lines on $X$ {\it minus} the number of lines meeting $L_1$ {\it or} $L_2$. Accounting for the double-counting of lines meeting {\it both} $L_1$ and $L_2$, the number of choices for $L_3$ is 
 \[
 \begin{aligned}
 \big[(&q^3+1)(q+1)\big] - 2\big[q^3+q+1\big] + \big[q^2+1\big] \\
 &= q(q^2+1)(q-1), 
\end{aligned}
\]
 using Corollary~\ref{count}(a), Theorem~\ref{starcount}(b), and Corollary~\ref{skewpair-connected}.
 The choice of the triple $L_1, L_2, L_3$ determines the quadric, and hence $q+1$ lines in $Q\cap X$ that all intersect $L_1, L_2, L_3$ by Theorem~\ref{TripleQuads}.
There are thus $(q+1)q(q-1)$ ways to choose the triple $M_1, M_2, M_3$. In total, the number of ordered sextuples is thus
\[
[(q^3+1)(q+1)]\cdot [q^4] \cdot [ q(q^2+1)(q-1)] \cdot [(q+1)q(q-1)] = q^6(q^4-1)(q^3+1)(q^2-1).
\]
This is precisely the order of the automorphism group $\Aut(X)$ by Corollary~\ref{orderAut}. So $\Aut(X)$ must act transitively on the set  $\mathcal S$, and hence on the set of all triples of skew lines on $X$.
\end{proof}

For future reference, we record the following corollary of the proof of Theorem~\ref{TripleTrans}:
 
\begin{corollary}\label{QuadricCount}
A smooth extremal surface $X$ contains exactly $\frac{1}{2}(q^3+1)(q^2+1)q^4$ quadric configurations,
where  the degree of $X$ is $q+1$. \end{corollary}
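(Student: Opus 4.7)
The plan is to double count ordered triples of skew lines on $X$, using the fact (established in the proof of Theorem~\ref{TripleTrans}) that the total number of such ordered triples equals
\[
N = (q^3+1)(q+1) \cdot q^4 \cdot q(q^2+1)(q-1).
\]

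First I would invoke Theorem~\ref{TripleQuads}: every ordered triple of skew lines on $X$ determines, and is contained in, a unique quadric configuration $\mathcal L \cup \mathcal M$. So if $N_Q$ denotes the number of ordered triples of skew lines lying in a single quadric configuration, then the number of quadric configurations on $X$ is $N / N_Q$.

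The second step is to compute $N_Q$. A quadric configuration consists of two rulings $\mathcal L$ and $\mathcal M$, each a set of $q+1$ pairwise skew lines, while lines from opposite rulings always meet. Consequently, an ordered triple of \emph{skew} lines drawn from $\mathcal L \cup \mathcal M$ must come entirely from $\mathcal L$ or entirely from $\mathcal M$. Each ruling yields $(q+1)q(q-1)$ ordered triples of distinct lines, and any three distinct lines within a ruling are automatically pairwise skew, giving
\[
N_Q = 2(q+1)q(q-1).
\]

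Dividing is then a routine cancellation, and the only real thing to check is that the arithmetic simplifies correctly:
\[
\frac{N}{N_Q} \;=\; \frac{(q^3+1)(q+1)\, q^4\, q(q^2+1)(q-1)}{2(q+1)q(q-1)} \;=\; \tfrac{1}{2}(q^3+1)(q^2+1)q^4,
\]
which is the claimed count. The only step with any conceptual content is verifying that within a single quadric configuration, skew triples can only be drawn from one ruling at a time; everything else is a direct appeal to Theorem~\ref{TripleQuads} and the explicit count of skew triples already produced in the proof of Theorem~\ref{TripleTrans}.
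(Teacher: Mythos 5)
Your proof is correct and is essentially the paper's own argument: both divide the count of ordered skew triples from the proof of Theorem~\ref{TripleTrans} by the number of ordered skew triples inside a single quadric configuration, and both arrive at $2(q+1)q(q-1)$ for the latter (the paper picks $L_1$ from all $2(q+1)$ lines and then $L_2,L_3$ from its ruling, while you pick the ruling first — the same count). No gaps.
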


\begin{proof}[Proof of Corollary]
By Theorem~\ref{TripleQuads}, each quadric configuration on a smooth extremal surface $X$ is uniquely determined  by an ordered triple of skew lines $(L_1, L_2, L_3) $  on $X$. The number of such ordered triples is
$$
(q^3+1)(q+1) \cdot q^4 \cdot q(q^2+1)(q-1),
$$
as we computed in 
 the proof of Theorem~\ref{TripleTrans}. To determine the number of quadric configurations, then, we must determine the number of ordered triples determining the {\it same } quadric. To this end, first
note that there are $2(q+1)$ choices of a line $L_1$ in $\mathcal Q$.
Once $L_1$ is  fixed, the lines $L_2$ and $ L_3$ are among the $q$ lines in same ruling of $\mathcal Q$, so there are $q(q-1)$  choices for $(L_2, L_3)$.  We conclude that 
 there are \[\frac{(q^3+1)(q+1)q^5(q-1)(q^2+1)}{2(q+1)q(q-1)} = \frac{1}{2}(q^3+1)(q^2+1)q^4\] quadric configurations on a smooth extremal surface.
\end{proof}


\subsection{Star chords in Quadric Configurations} We record some observations about star chords and quadric configurations that will be useful in Section~\ref{Double2d}.

 \begin{lemma}\label{StarPointsQuadric}
 Let $Q$ be a quadric defining a quadric configuration on a smooth extremal surface $X$.
Let $\ell$ be a line on $Q$ but not on $X$. Then $\ell$  intersects $X$ in $q+1$ distinct points, and if any one of these intersection points is a star point of $X$, then they all are.
\end{lemma}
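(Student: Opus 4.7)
The plan is to prove the lemma in two stages: first the count, then the dichotomy.

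For the count, I would observe that $\ell \subset Q$ is not on $X$, so it lies in one of the two rulings of $Q$. Up to swapping rulings, $\ell$ is in the ruling containing $\mathcal M$, so $\ell$ is skew to every line of $\mathcal M$ but meets each of the $q+1$ pairwise skew lines $L_1, \ldots, L_{q+1}$ of $\mathcal L$ in a single point $p_i = \ell \cap L_i$. The $p_i$ are distinct (the lines of $\mathcal L$ being pairwise skew), and since $\ell \cdot X = q+1$ with $\ell \not\subset X$, these exhaust $\ell \cap X$ scheme-theoretically, each appearing with multiplicity one.

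For the dichotomy, I would exploit the rich symmetry of $X$. By Theorem~\ref{TransQuad}, $\Aut(X)$ acts transitively on quadric configurations, so after a projective change of coordinates I may assume $X = \mathbb V(x^qw + w^qx + y^qz + z^qy)$, in which case star points coincide with $\mathbb F_{q^2}$-points by Proposition~\ref{HermStar}, and $Q = \mathbb V(xw - yz)$, the standard Segre quadric. Under the Segre identification $Q \cong \mathbb P^1 \times \mathbb P^1$, the two rulings correspond to the two factors; one checks that $\mathcal L$ and $\mathcal M$ each consist of the $q+1$ zeros of the Frobenius form $\alpha_1^q\alpha_2 + \alpha_1\alpha_2^q$ on the respective factor, and that $\ell$ corresponds to a point $\beta$ in the $\mathcal M$-factor with $\beta^q + \beta \neq 0$.

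The next step is to produce a subgroup $\Gamma \leq \Aut(X)$ that fixes $\ell$ and acts transitively on $\mathcal L$. Granted this, the lemma follows at once: star points are $\Aut(X)$-invariant, and any $\sigma \in \Gamma$ carrying $L_i$ to $L_j$ also carries $p_i = \ell \cap L_i$ to $p_j = \ell \cap L_j$. I would build $\Gamma$ from three explicit families of automorphisms of $X$, each verifiable by direct substitution into the Frobenius form: the translations $[x:y:z:w] \mapsto [x:y:z+\mu x : w+\mu y]$ for trace-zero $\mu \in \mathbb F_{q^2}$ (i.e., $\mu^q + \mu = 0$), the scalings $[x:y:z:w] \mapsto [x:y:\lambda z:\lambda w]$ for $\lambda \in \mathbb F_q^*$, and the involution $[x:y:z:w] \mapsto [z:w:x:y]$. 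Each acts trivially on the $\mathcal M$-factor (hence preserves $\ell$) and acts on the $\mathcal L$-factor as $\alpha \mapsto \alpha + \mu$, $\alpha \mapsto \lambda \alpha$, and $\alpha \mapsto 1/\alpha$ respectively.

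The main obstacle is confirming that these generators suffice for transitivity on $\mathcal L$. This reduces to two elementary checks: the trace-zero translations act simply transitively on the $q$ affine roots of $\alpha^q + \alpha$ (these are exactly the kernel of the trace $\mathbb F_{q^2} \to \mathbb F_q$), and the involution exchanges $0 \in \mathcal L$ with the remaining zero at infinity. Combining these moves places every zero into a single orbit, yielding transitivity on $\mathcal L$ and hence on $\{p_1, \ldots, p_{q+1}\}$, which completes the argument.
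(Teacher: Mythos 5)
Your argument is correct, but the engine driving the dichotomy is genuinely different from the paper's. Both proofs begin the same way, invoking Theorem~\ref{TransQuad} to normalize $X$ to a Hermitian model with $Q=\mathbb V(xw-yz)$; for the count, the paper computes directly that $\ell\cap X$ consists of the $q+1$ points $[\lambda\mu:\mu:\lambda:1]$ with $\mu^{q+1}=-1$, while you argue coordinate-freely that $\ell$ meets the $q+1$ skew lines of the opposite ruling in distinct points that exhaust the degree-$(q+1)$ intersection --- equally valid. The real divergence is in the second claim. The paper's argument is arithmetic: by Proposition~\ref{HermStar} star points are exactly the $\mathbb F_{q^2}$-points, so if one intersection point $[\lambda\mu:\mu:\lambda:1]$ is a star point then $\lambda\in\mathbb F_{q^2}$, which forces \emph{all} the intersection points to be $\mathbb F_{q^2}$-rational and hence star points. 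You instead build an explicit subgroup of $\Aut(X)$ that stabilizes $\ell$ and permutes the $q+1$ intersection points transitively, then conclude because projective automorphisms of $X$ preserve the property of being a star point; notably, your citation of Proposition~\ref{HermStar} in the setup is never actually used. Your route costs more verification (three families of maps must be checked to preserve both $X$ and $Q$, and the trace-kernel computation must be done), but it proves something slightly stronger --- that the stabilizer of the pair $(Q,\ell)$ acts transitively on $\ell\cap X$, in the spirit of Theorem~\ref{TransQ2l} --- and it replaces the rationality characterization of star points by pure symmetry. There is no circularity: Theorem~\ref{TransQuad} is established before this lemma, and you do not use Proposition~\ref{QuadricToSkew} or Theorem~\ref{TransQ2l}, which depend on it.
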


\begin{proof}
Because the automorphism group of $X$ acts transitively on quadric configurations (Theorem~\ref{TransQuad}), we may assume that $X$ is given by the Fermat Frobenius form and $Q$ by $xw=yz$.
The lines on $Q$ have the following parametrizations
$$
\{[\lambda s: s : \lambda t : t ]\;\; | \;\; [s:t]\in \mathbb P^1\} \quad {\text{ and }} \quad 
\{[\lambda s: \lambda t :s :   t ]\;\; | \;\; [s:t]\in \mathbb P^1\}. 
$$
Without loss of generality, let $\ell = \{[\lambda s: s : \lambda t : t ]\,\,\, | \,\,\,\, [s:t]\in \mathbb P^1\}$ for some fixed $\lambda$.
The condition that a point  $[\lambda s_0: s_0 : \lambda t_0 : t_0]$
of  $\ell$ lies on $X$ is that
\begin{equation}\label{starLineEq}
(\lambda s_0)^{q+1}+ s_0^{q+1} +( \lambda t_0)^{q+1} + t_0^{q+1} = (\lambda^{q+1}+1)(s_0^{q+1}+t_0^{q+1}) = 0.
\end{equation}
There are two ways this can happen. Either  $\lambda^{q+1}=-1$, which means (\ref{starLineEq}) holds for all values of $[s_0:t_0]$, so  the line $\ell$ lies on $X$. 
Or $\lambda^{q+1}\neq -1$, and there are exactly $q+1$ points $[s_0:t_0]$ satisfying  $s_0^{q+1}+t_0^{q+1}=0$. In this case, there are exactly $q+1$ distinct points of $\ell\cap X$, all of the form $[\lambda \mu : \mu : \lambda : 1]$ where $\mu$ ranges through the $q+1$ distinct roots of $-1$.  In particular,  $\mu\in \mathbb F_{q^2}$. 
Now if  one of these points $[\lambda \mu : \mu : \lambda : 1]$ is a star point, then it is defined over $\mathbb F_{q^2}$ (Proposition~\ref{HermStar}),  so $\lambda\in \mathbb F_{q^2}$ as well. Thus all $q+1$ points of $X\cap \ell$ are defined over  $\mathbb F_{q^2}$ and  hence all are star points. \end{proof}

\begin{proposition} \label{QuadricToSkew} Let $Q$ be a smooth quadric defining a quadric configuration on a smooth  extremal surface $X$. Then there are exactly $q^2-q$ star chords in each ruling of $Q,$ and those in opposite rulings meet off $X$.
 \end{proposition}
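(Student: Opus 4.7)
My plan is to reduce to a standard model, parametrize both rulings, use Lemma~\ref{StarPointsQuadric} together with Proposition~\ref{HermStar} to identify which lines in each ruling are star chords, and finally run a direct intersection computation to verify the off-$X$ claim.

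First I would use Theorem~\ref{TransQuad} to assume, without loss of generality, that $X$ is the Fermat extremal surface $\mathbb V(x^{q+1}+y^{q+1}+z^{q+1}+w^{q+1})$ and that $Q$ is the quadric $\mathbb V(xw-yz)$ from Example~\ref{exQuadric2} (taking $\mu=1$). This is legitimate because the Fermat form is Hermitian (its matrix is $I$), so Proposition~\ref{HermStar} applies, and because Theorem~\ref{TransQuad} says $\Aut(X)$ permutes quadric configurations transitively. The two rulings of $Q$ can then be parametrized over $\mathbb P^1$ by
\[
\ell_\lambda=\{[\lambda s:s:\lambda t:t]:[s:t]\in\mathbb P^1\}\quad\text{and}\quad m_\mu=\{[\mu u:\mu v:u:v]:[u:v]\in\mathbb P^1\},
\]
together with the two "limit" lines $\ell_\infty=\mathbb V(y,w)$ and $m_\infty=\mathbb V(z,w)$. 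From Example~\ref{exQuadric2} the lines of $Q$ actually lying on $X$ correspond exactly to $\lambda^{q+1}=-1$ (for ruling $A$) and to $\mu^{q+1}=-1$ (for ruling $B$); in particular $\ell_\infty$ and $m_\infty$ are not on $X$.

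Next I would identify the star chords in, say, ruling $A$. Fix $\lambda$ with $\ell_\lambda\not\subset X$. Intersecting $\ell_\lambda$ with $X$ gives the equation $(\lambda^{q+1}+1)(s^{q+1}+t^{q+1})=0$, whose solutions are the $q+1$ points $[\lambda\nu:\nu:\lambda:1]$ where $\nu^{q+1}=-1$ (the cases $\lambda=\infty$ and one similar edge case being parallel). Each $\nu$ lies in $\mathbb F_{q^2}$, so these points are $\mathbb F_{q^2}$-rational exactly when $\lambda\in\mathbb F_{q^2}$. By Lemma~\ref{StarPointsQuadric} these points are either all star points or none, and by Proposition~\ref{HermStar} a point on the Fermat extremal surface is a star point iff it is $\mathbb F_{q^2}$-rational. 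Thus $\ell_\lambda$ is a star chord precisely when $\lambda\in\mathbb P^1(\mathbb F_{q^2})$ and $\lambda^{q+1}\neq-1$. Since $|\mathbb P^1(\mathbb F_{q^2})|=q^2+1$ and exactly $q+1$ elements of $\mathbb F_{q^2}$ satisfy $\lambda^{q+1}=-1$, the total count is $(q^2+1)-(q+1)=q^2-q$. The same argument applied to ruling $B$ yields the same count.

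Finally, for the off-$X$ intersection claim, let $\ell_\lambda$ and $m_\mu$ be star chords in opposite rulings. Setting the two parametrizations equal and solving, their unique intersection point is
\[
\ell_\lambda\cap m_\mu=\{[\lambda\mu:\mu:\lambda:1]\},
\]
and substituting into the Fermat form gives the pleasant factorization
\[
(\lambda\mu)^{q+1}+\mu^{q+1}+\lambda^{q+1}+1=(\lambda^{q+1}+1)(\mu^{q+1}+1),
\]
which is nonzero precisely because $\ell_\lambda$ and $m_\mu$ are not contained in $X$. The degenerate cases $\lambda=\infty$ or $\mu=\infty$ are handled identically: e.g.\ $\ell_\infty\cap m_\mu=\{[\mu:0:1:0]\}$, whose value on the Fermat form is $\mu^{q+1}+1\neq 0$.

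I do not foresee a serious obstacle here; the only mild care needed is in handling the points at infinity of the parameter $\mathbb P^1$ uniformly, and in invoking the Hermitian hypothesis of Proposition~\ref{HermStar}, which is immediate for the Fermat form. Everything else reduces to the single factorization $(\lambda^{q+1}+1)(\mu^{q+1}+1)$, which encodes both the characterization of which $\ell_\lambda,m_\mu$ lie on $X$ and the fact that opposite-ruling star chords meet off $X$.
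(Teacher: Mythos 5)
Your proof is correct, but it takes a genuinely different route from the paper's. The paper argues synthetically: it fixes a line $M$ of $Q\cap X$ in the ruling opposite to a putative star chord $\ell$, notes that $\ell\cap M$ must be a star point, and then counts --- $M$ carries $q^2+1$ star points (Theorem~\ref{starcount}(a)), of which $q+1$ are the intersections with the lines of $X$ in $\ell$'s ruling, leaving $q^2-q$ points each of which (by Lemma~\ref{StarPointsQuadric}) supports exactly one star chord in that ruling. For the off-$X$ claim the paper also argues synthetically: if $p=\ell\cap m$ lay on $X$ it would lie on a line $M\subset Q\cap X$ in the ruling opposite $\ell$, and uniqueness of the ruling line through $p$ would force $m=M\subset X$, a contradiction. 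You instead reduce to the Fermat model via Theorem~\ref{TransQuad}, parametrize both rulings, and characterize the star chords as the lines $\ell_\lambda$ with $\lambda\in\mathbb P^1(\mathbb F_{q^2})$ and $\lambda^{q+1}\neq -1$ using Proposition~\ref{HermStar}; the count $(q^2+1)-(q+1)=q^2-q$ and the factorization $(\lambda^{q+1}+1)(\mu^{q+1}+1)\neq 0$ at the intersection point $[\lambda\mu:\mu:\lambda:1]$ then do all the work. Your computation is essentially the explicit description that the paper only establishes later, in the proof of Theorem~\ref{TransQ2l} (where it is justified by citing Proposition~\ref{QuadricToSkew}); since you derive it directly from Lemma~\ref{StarPointsQuadric} and Proposition~\ref{HermStar}, there is no circularity. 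What your approach buys is a concrete list of all star chords on $Q$ and a one-line verification of the off-$X$ claim; what the paper's approach buys is independence from the Hermitian/$\mathbb F_{q^2}$-rationality description of star points and from any choice of coordinates. The only care needed in your version --- uniform treatment of the points at infinity of the parameter line --- is handled adequately.
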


\begin{proof}
Consider a star chord $\ell$ on $Q$. Write $Q\cap X =\mathcal L \cup \mathcal M$ where $\mathcal L$ and $\mathcal M$ are the two skew sets of lines on $X$ in opposite rulings of $Q$.

Because $\ell$ must lie in one of the rulings of $Q$, it  intersects each of the $q+1$  lines in,  say, $\mathcal M$. For each  $M\in \mathcal M$, the intersection point
 $\ell \cap M$  is a star point (Theorem~\ref{StarLines}(iv)). Conversely, through each  star point on $M$, the unique line in the opposite ruling of $Q$ is either a line in  $\mathcal L$, or a star chord, depending on whether or not it  is on $X$ (Lemma~\ref{StarPointsQuadric}).
Since there are  $q^2+1$ total star points on $M$ (Theorem~\ref{starcount}(a)), this leaves 
 $q^2-q$ possible points of intersection of the star chord $\ell$ with $M$. Thus there are exactly $q^2-q$ possibilities for the star chord $\ell$ in this ruling of $Q$. By symmetry, the same holds in the other ruling.

Now suppose $\ell$ and $m$ are star chords in opposite rulings on $Q$.
If $p=\ell \cap m$ lies on $X$, then it must be one of the $q+1$ points on $\ell\cap X$, and hence $p$ is  some star point on some line $M\subset Q\cap X$ in the ruling opposite $\ell$. In this case,  $M$ is the unique line through $p$ on $Q$ in the ruling opposite $\ell$, forcing 
$m=M$. This contradicts our assumption that $m$ is not on $X$.
\end{proof}

\begin{remark}\label{QuadricToSkew2}
Proposition~\ref{QuadricToSkew} and Lemma~\ref{StarPointsQuadric} together say the complete set of lines on $Q$ passing through star points of $X$ consists of two sets of $q^2+1$ skew lines (one on each ruling); in each of these skew sets, there are $q+1$ lines on $X$ and $q^2-q$ star chords. 
\end{remark}

\begin{theorem} \label{TransQ2l} The automorphism group of a smooth extremal surface acts transitively on the set of triples $(Q, \ell, m)$ consisting of 
 a quadric $Q$ defining a quadric configuration, together with a choice star chords  $ \ell$ and $ m$, one  in each ruling of $Q$. 
\end{theorem}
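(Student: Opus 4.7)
The plan is to apply orbit-stabilizer, in the style of the proofs of Theorem~\ref{TripleTrans} and Corollary~\ref{orderAut}.

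First I would count the set $\mathcal{T}$ of all such triples. By Corollary~\ref{QuadricCount}, there are $\tfrac12 q^4(q^3+1)(q^2+1)$ quadric configurations; for each underlying $Q$, Proposition~\ref{QuadricToSkew} gives $q^2-q$ star chords in each of the two rulings, so there are $2(q^2-q)^2$ ordered pairs $(\ell,m)$ of star chords in opposite rulings on $Q$ (the factor of $2$ accounting for which ruling contains $\ell$). Hence $|\mathcal{T}|=q^6(q-1)^2(q^3+1)(q^2+1)$, and comparing with $|\Aut(X)|=q^6(q-1)^2(q+1)^2(q^3+1)(q^2+1)$ from Corollary~\ref{orderAut} yields $|\Aut(X)|/|\mathcal{T}|=(q+1)^2$.

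Next I would bound $|\mathrm{Stab}_{\Aut(X)}(Q,\ell,m)|\leq (q+1)^2$. Using the Segre identification $Q\cong \mathbb{P}^1\times\mathbb{P}^1$ (cf.\ the parametrization in the proof of Theorem~\ref{TripleQuads}), the intersection $X\cap Q$ becomes the union of two extremal configurations of $q+1$ points, one in each $\mathbb{P}^1$-factor, with the two rulings corresponding to the two factors. Any $g$ stabilizing $Q$ restricts injectively to $\Aut(Q)=\PGL(2,k)^2\rtimes\mathbb{Z}/2$ (injectivity because $Q$ contains five points in general linear position in $\mathbb{P}^3$). Because $\ell$ and $m$ are in opposite rulings, $g$ cannot swap rulings, so its restriction has the form $(f_1,f_2)\in\PGL(2,k)^2$; each $f_i$ preserves the corresponding extremal configuration and so, by Proposition~\ref{dim0}, lies in $\PGL(2,\mathbb{F}_q)$. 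By Lemma~\ref{StarPointsQuadric} and Proposition~\ref{HermStar}, star chords correspond to $\mathbb{F}_{q^2}$-points of $\mathbb{P}^1$ outside the extremal configuration; thus $f_1$ and $f_2$ each fix such a point together with its distinct Galois conjugate, so each lies in the non-split torus of $\PGL(2,\mathbb{F}_q)$, which has order $q+1$. Hence $|\mathrm{Stab}|\leq (q+1)^2$.

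By orbit-stabilizer the orbit of $(Q,\ell,m)$ has size at least $|\Aut(X)|/(q+1)^2=|\mathcal{T}|$, so it fills $\mathcal{T}$ and the action is transitive. The main obstacle I expect is the stabilizer bound: the counting is routine, but verifying the bound requires combining the Segre description of $X\cap Q$, Proposition~\ref{dim0} on automorphisms of extremal configurations, the identification of star chords with $\mathbb{F}_{q^2}$-points outside the extremal configuration (from Lemma~\ref{StarPointsQuadric} and Proposition~\ref{HermStar}), and a short Galois-theoretic count for the non-split torus.
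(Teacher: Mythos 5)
Your proof is correct, and it takes a genuinely different route from the paper's. The paper proves Theorem~\ref{TransQ2l} constructively: after reducing to the Fermat surface and the quadric $xw=yz$ via Theorem~\ref{TransQuad}, it writes down two explicit matrices $g$ and $h$ in $\Aut(X)\cap\Aut(Q)$ that move an arbitrary pair of star chords in opposite rulings to $\mathbb V(x,z)$ and $\mathbb V(z,w)$. You instead run the orbit--stabilizer scheme that the paper itself uses for Theorem~\ref{TripleTrans}: your counts of $\mathcal T$ and of $|\Aut(X)|$ are right (note $(q^2-1)(q^4-1)=(q-1)^2(q+1)^2(q^2+1)$, giving the ratio $(q+1)^2$), and your stabilizer bound is sound --- the restriction to $Q$ is injective, cannot swap rulings since it fixes $\ell$ and $m$ individually, and each factor $f_i$ preserves the extremal configuration of $q+1$ points cut out on that $\mathbb P^1$ while fixing the $\mathbb F_{q^2}$-point corresponding to the chord, which pins it into a subgroup of order $q+1$. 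Two small points of care: $\Aut(Y_i)$ is only a \emph{conjugate} of $\PGL(2,\mathbb F_q)$ (the configuration is $\mathbb V(s^{q+1}+t^{q+1})$, not $\mathbb P^1(\mathbb F_q)$), so the ``Galois conjugate'' must be taken with respect to the transported $\mathbb F_q$-structure --- this does not affect the order; and you should invoke Theorem~\ref{TransQuad} explicitly to reduce to the standard $Q$ before applying the coordinate computation inside Lemma~\ref{StarPointsQuadric}, which is legitimate since that theorem precedes this one. What each approach buys: the paper's explicit matrices are reusable elsewhere and independent of the order formula, whereas your argument depends on Corollary~\ref{orderAut} and Corollary~\ref{QuadricCount} but yields as a free byproduct that the stabilizer of a triple $(Q,\ell,m)$ has order \emph{exactly} $(q+1)^2$, a torus-times-torus inside $\Aut(X)$.
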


\begin{proof}
We may assume that the extremal surface $X$ is defined by $x^{q+1}+y^{q+1}+z^{q+1}+w^{q+1}$ and $Q$ by $xw-yz$ (Theorem~\ref{TransQuad}). Let $\ell $ and $m$ be an arbitrary
 pair of star chords on $Q$, lying in opposite  rulings.
 It suffices to show that there is an automorphism of $X$ which stabilizes $Q$ and
 sends $\ell $ and $m$ to the star chords $\mathbb V(x, z)$ and to $\mathbb V(z, w),$ respectively.
 
 The lines in the two rulings of  $Q$ have the form
\[
 \mathbb V(\lambda x - \mu y, \lambda z - \mu w)   \,\,\,\,\,\,\,\,\,\, {\text{and}} \,\,\,\,\,\,\,\,\,\,  \,\, \mathbb V(\alpha x - \beta z, \alpha y - \beta w);
\]
the star chords among them are precisely those where $[\lambda:\mu]$ (respectively  $[\alpha :\beta]$)  is an $\mathbb F_{q^2}$ point of $\mathbb P^1$ not on $\mathbb V(s^{q+1}+t^{q+1})$. Indeed, all such lines are on $Q$,  but not on $X$, and since there are $q^2-q$  in each ruling, we have found the complete list of star chords on $Q$  (Proposition~\ref{QuadricToSkew}).
 
 Suppose that $\ell = \mathbb V(\lambda x - \mu y, \lambda z - \mu w).$
The change of coordinates  $g$ where
$
g^{-1}= \begin{bmatrix}
\lambda^q & \mu & 0 & 0 \\
-\mu^q & \lambda &0& 0\\
 0 & 0 & \lambda^q & \mu  \\
 0& 0 & -\mu^q & \lambda\\
\end{bmatrix}
$ is in  $ \Aut (X) \cap \Aut (Q)$, since it 
simply scales the defining equation of both $X$ and $Q$ by a non-zero scalar (remember $\lambda^{q+1}+\mu^{q+1} \neq 0$).
In addition,  $g$ sends    $\ell$ to $\mathbb V(x, z)$, as
\[
\begin{aligned} g(\ell) &= \mathbb V\left(\lambda (\lambda^q x + \mu y) - \mu (-\mu^{q} x + \lambda y), \, \lambda (\lambda^qz +  \mu  w) - \mu (-\mu^q z + \lambda w)\right)\\
& \ =\mathbb V\left((\lambda^{q+1}+\mu^{q+1})x, \, (\lambda^{q+1}+\mu^{q+1})z\right)
\, =\,  \mathbb V(x, z).
\end{aligned}
\]

Of course, $g$ sends $m$ to some star chord on $Q$ in the opposite ruling from $g(\ell)$. So $g(m) = \mathbb V(\alpha x - \beta z, \alpha y - \beta w)$ for some
 $\mathbb F_{q^2}$ point $[\alpha:\beta]\in \mathbb P^1$ not on $\mathbb V(s^{q+1}+t^{q+1})$. 
 Now observe that the change of coordinates $h$ where 
$h^{-1}= 
 \begin{bmatrix}
 \beta & 0 &\alpha^q &0\\
0 & \beta & 0 &\alpha^q \\
\alpha &0 &- \beta^q & 0 \\
0&\alpha &0 &- \beta^q \\
\end{bmatrix}
$ preserves the Fermat extremal surface and  the quadric $Q$ defined by $xw=yz$. In addition,  $h$  preserves the line $V(x, z)$, since
$h(x)$ and $h(z)$ are forms in only $x$ and $z$. Finally,  
the line $g(m)=\mathbb V(\alpha x - \beta z, \alpha y - \beta w)$ is sent to 
\[
\begin{aligned}
h(g(m)) 
 = \ & \mathbb V(\alpha (\beta x+\alpha^q z)- \beta (\alpha x -\beta^q z),\,  \alpha (  \beta y + \alpha^q w) - \beta ( \alpha y -\beta^qw))\\
 = \ & \mathbb V( (\alpha^{q+1}+\beta^{q+1}) z, \,  (\alpha^{q+1}+\beta^{q+1})w)\,
 = \, \mathbb V(z, w)
 \end{aligned}
 \]
 We conclude that the composition $h\circ g$ is an automorphism of $X$ which preserves $Q$, and takes
 $\ell$ and $m$ to $\mathbb V(x, z)$ and $\mathbb V(z, w)$, respectively. This completes the proof.
 \end{proof}

\begin{corollary}\label{dualstarlinesinquadric}
If a star chord $\ell$ is in a quadric $Q$  defining a quadric configuration on a smooth extremal surface, then its dual star chord $\ell'$ (Definition~\ref{dual}) is also on $Q$, necessarily in the same ruling as $\ell$.
\end{corollary}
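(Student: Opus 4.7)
The plan is to reduce to a single, completely explicit example using the transitivity available from Theorem~\ref{TransQ2l}, and then carry out a short direct computation of the dual star chord.

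First, I would invoke transitivity. Theorem~\ref{TransQ2l} asserts that $\Aut(X)$ acts transitively on triples $(Q, \ell, m)$ with $\ell, m$ star chords in opposite rulings; given any pair $(Q, \ell)$, Proposition~\ref{QuadricToSkew} furnishes such an $m$ in the opposite ruling, so $\Aut(X)$ already acts transitively on pairs $(Q, \ell)$. Because any automorphism of $X$ permutes star points and star planes, it carries dual star chords to dual star chords and preserves membership in a ruling of a stabilized quadric. Hence it suffices to verify the corollary for any single normalized pair $(Q,\ell)$.

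Next, I would work in the model $X = \mathbb V(x^{q+1}+y^{q+1}+z^{q+1}+w^{q+1})$, $Q = \mathbb V(xw - yz)$, $\ell = \mathbb V(x,z)$. Points of $\ell$ have the form $[0:s:0:t]$, and lie on $X$ precisely when $s^{q+1}+t^{q+1}=0$, giving the $q+1$ star points $p_\alpha = [0:\alpha:0:1]$ with $\alpha^{q+1} = -1$ (they are $\mathbb F_{q^2}$-points, hence star points by Proposition~\ref{HermStar}). Differentiating the Fermat form, the tangent plane at $p_\alpha$ is
\[
T_{p_\alpha} X \;=\; \mathbb V(\alpha^{q} y + w).
\]
Every such plane contains the line $\mathbb V(y,w)$, independently of $\alpha$. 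Therefore $\mathbb V(y,w)$ is contained in the intersection of all star planes centered along $\ell$, and by Definition~\ref{dual} (combined with Theorem~\ref{StarLines}(ii) asserting uniqueness of that intersection as a single line), we conclude
\[
\ell' \;=\; \mathbb V(y,w).
\]

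Finally, I would check the two claimed properties. Setting $y=w=0$ kills $xw-yz$, so $\ell' \subset Q$. As for the ruling, Theorem~\ref{StarLines}(ii) tells us $\ell$ and $\ell'$ are skew; but two lines on a smooth quadric in $\mathbb P^3$ lying in opposite rulings always meet, so $\ell$ and $\ell'$ must belong to the same ruling of $Q$. (Alternatively, both $\mathbb V(x,z)$ and $\mathbb V(y,w)$ appear in the pencil $\mathbb V(\lambda x - \mu y,\, \lambda z - \mu w)$ at $[\lambda:\mu]=[1:0]$ and $[0:1]$ respectively, confirming the same ruling directly.) This completes the verification in the normalized case, and by the transitivity reduction, the corollary holds in general.

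There is no serious obstacle here; the only point requiring slight care is justifying the transitivity reduction (making sure automorphisms do preserve "being the dual star chord" and "being in the same ruling as $\ell$ on $Q$"), but this is immediate because $\Aut(X)$ acts by linear automorphisms of $\mathbb P^3$ and sends $Q$ to $Q$ in our normalization.
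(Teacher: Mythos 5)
Your proposal is correct and follows essentially the same route as the paper: reduce via the transitivity of Theorem~\ref{TransQ2l} to the Fermat surface with $Q=\mathbb V(xw-yz)$, identify the dual star chord explicitly, observe it lies on $Q$, and deduce the same-ruling claim from skewness (Theorem~\ref{StarLines}(ii)). The only cosmetic difference is that you recompute the dual of $\mathbb V(x,z)$ from the tangent planes, whereas the paper normalizes to $\ell=\mathbb V(x,y)$ and cites the dual pair already worked out in Example~\ref{standardStarLineConfig}.
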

\begin{proof}
Assume that the extremal surface  is the Fermat surface, and $\ell$  is the star chord $\mathbb V(x, y)$ 
on the quadric $Q=\mathbb V(xw-yz)$ (Theorem~\ref{TransQ2l}). The dual chord of $\ell$ is $\ell'=\mathbb V(z, w)$ (Example~\ref{standardStarLineConfig}),  which clearly lies on $Q$ as well. The lines $\ell$ and $\ell'$ lie in the same ruling of $Q$ because they are skew (Theorem~\ref{StarLines}(ii)).
\end{proof}

\begin{corollary}\label{quadrics intersecting in star chords} Let $\ell$ and $m$ be star chords for a smooth extremal surface $X$. If $\ell$ and $m$ lie on a quadric  that defines a quadric configuration on $X$, then $\ell$ and $m$ lie on exactly  $q+1$ quadrics  that define quadric configurations on $X$.
\end{corollary}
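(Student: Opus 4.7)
The plan is to reduce to an explicit model via the transitivity developed in Theorem~\ref{TransQ2l}, narrow down the candidate quadrics using duality, and then count by a Segre parameterization.  Invoking Theorem~\ref{TransQ2l}, I may assume that $X$ is the Fermat surface $\mathbb V(x^{q+1}+y^{q+1}+z^{q+1}+w^{q+1})$, $Q = \mathbb V(xw - yz)$, $\ell = \mathbb V(x, z)$, and $m = \mathbb V(z, w)$, with $\ell$ and $m$ in opposite rulings of $Q$.  Example~\ref{standardStarLineConfig} (together with the symmetry interchanging $(x,z)\leftrightarrow(y,w)$) identifies the duals as $\ell' = \mathbb V(y, w)$ and $m' = \mathbb V(x, y)$.

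Next I will pin down the candidate quadrics.  By Corollary~\ref{dualstarlinesinquadric}, any quadric configuration quadric $Q'$ containing $\ell$ must also contain $\ell'$, and similarly $Q'$ containing $m$ must contain $m'$.  So it suffices to count quadric configurations $Q'$ containing all four lines $\ell, \ell', m, m'$.  Requiring vanishing along each of these lines cuts down the $\mathbb P^9$ of quadrics in $\mathbb P^3$ by eight independent linear conditions --- each line kills three of the ten coefficients, with overlaps reducing the total to eight --- leaving a pencil.  A routine inspection of the surviving coefficients $a_{14}$ and $a_{23}$ identifies this pencil as $\mathbb V(\lambda xw + \mu yz)$ with $[\lambda:\mu] \in \mathbb P^1$.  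The two reducible members $\lambda = 0$ and $\mu = 0$ are unions of coordinate planes and hence fail to define quadric configurations, so the candidates reduce to the smooth quadrics $\mathbb V(xw - c yz)$ with $c \in k^*$.

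The remaining step is to decide for which $c$ the quadric $\mathbb V(xw - cyz)$ actually defines a quadric configuration on $X$; this is the computational heart of the argument and is carried out exactly in the spirit of Theorem~\ref{TripleQuads} and Example~\ref{exQuadric2}.  I will parametrize the quadric by the Segre map $([s_1:s_2], [t_1:t_2]) \mapsto [s_1 t_1: s_1 t_2: s_2 t_1: c s_2 t_2]$ and substitute into the Fermat form, obtaining an expression that collapses to
\[
(s_1^{q+1}+s_2^{q+1})\, t_1^{q+1} \;+\; (s_1^{q+1}+c^{q+1} s_2^{q+1})\, t_2^{q+1}.
\]
A whole ruling line lies on $X$ iff both coefficients vanish, and subtracting the two equations yields $(1-c^{q+1})s_2^{q+1}=0$; so the quadric defines a configuration precisely when $c^{q+1}=1$.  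Since this equation has exactly $q+1$ solutions in the algebraically closed field $k$, there are exactly $q+1$ quadric configuration quadrics containing both $\ell$ and $m$, as claimed.  The only real obstacle is keeping the Segre substitution organized, but this is a familiar pattern already exploited in the proof of Theorem~\ref{TripleQuads}.
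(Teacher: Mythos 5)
Your proposal is correct and follows essentially the same route as the paper: normalize via Theorem~\ref{TransQ2l}, use Corollary~\ref{dualstarlinesinquadric} to force the candidate quadrics to contain all four chords $\{\ell,\ell',m,m'\}$, identify the resulting pencil as $\mathbb V(\lambda xw+\mu yz)$, and then determine by a Segre/ruling computation (as in Example~\ref{exQuadric2}) that exactly the $q+1$ members with $c^{q+1}=1$ cut out quadric configurations. The only cosmetic differences are your choice of normalized coordinates for $\ell$ and $m$ and your explicit dismissal of the two reducible members of the pencil, which the paper handles implicitly by restricting to $\mu\neq 0$.
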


\begin{proof}

By Theorem~\ref{TransQ2l}, we can assume that $X$ is the Fermat extremal surface, $\ell=\mathbb V(x, y)$ and $m=\mathbb V(x, z)$.  The quadrics defining quadric configurations that contain $\ell$ and $m$ must also contain their dual star chords,  $\ell'=\mathbb V(z, w)$ and $m'=\mathbb V(y, w)$, respectively, by Corollary~\ref{dualstarlinesinquadric}. 

The quadrics containing  $\{\ell,\ell', m, m'\}$ are  defined by degree two polynomials in the ideal 
$$
\langle x, z \rangle \cap 
\langle y,w \rangle \cap 
\langle  z, w \rangle 
\cap \langle x, y \rangle =
\langle xw, yz \rangle.
$$ 
But a quadratic form $\mu\ xw - yz$ (where $\mu$ is a non-zero scalar)  
 defines quadric containing lines of $X$ if and only if  $\mu^{q+1}=1$. 
Indeed, 
the lines in one of the rulings are parametrized by $[a:b]\in \mathbb P^1$:
$$
L_{ab} = \{[as: \mu bs: at: bt] \,\, | \,\, [s:t]\in \mathbb P^1\},
$$
which lies on  the Fermat surface only if $\mu^{q+1}=1$ and $a^{q+1}+b^{q+1}=0$.
 Thus there are $q+1$ quadrics  that contain the four star chords $\{\ell,\ell', m, m'\}$.
\end{proof}




 
   
\section{\texorpdfstring{Double $2d$ Configurations}{Double 2d Configurations}}\label{Double2d}

One fascinating classical feature of the geometry of a cubic surface is the existence of thirty six ``double sixes'' \cite{Schlafli}.
 A double six consists of two collections of six skew lines on the cubic, with the property  that each line in one collection intersects exactly five lines in the other. A choice of double six is equivalent to a labeling of the twenty-seven lines on the cubic so that one of the collections of six skew lines is the set of six exceptional divisors, thinking of the cubic surface as the blow up of the plane at six points, and the other collection is the set of  strict transforms of the six  conics through five of the points. In this section, we present a generalization of a ``double six'' which exists on all extremal surfaces.

\begin{definition}\label{double}
For any $d \geq  2$, a {\bf double 2d} is a collection of two sets, $\mathcal{A}$ and $\mathcal{B}$, each consisting  of $2d$ lines  in projective three space, such that
\begin{enumerate}
\item Each line in $\mathcal{A}$ (resp. $\mathcal{B}$) is skew to every other line in $\mathcal{A}$ (resp. $\mathcal{B}$); and
\item Each line in $\mathcal{A}$ (resp. $\mathcal{B}$) intersects exactly $d+2$ lines in $\mathcal{B}$ (resp. $\mathcal{A}$). 
\end{enumerate}
\end{definition}

  Typically, we do not an expect a surface of degree $d$ to contain {\it any} double $2d$---for example, a general surface in $\mathbb P^3$ of degree greater than three contains no line \cite[12.8]{Harris}. The next result guarantees, however, that like cubic surfaces, extremal surfaces always contain double $2d$'s.  
 
  \begin{theorem}\label{Quadric2}
  Every smooth extremal surface of degree $d$ contains  double $2d$ configurations of lines. 
  \end{theorem}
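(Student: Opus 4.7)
By Theorem 2.1.1, it suffices to construct a double $2d$ on a single smooth extremal surface, so I would work with the Fermat surface $X = \mathbb V(x^{q+1} + y^{q+1} + z^{q+1} + w^{q+1})$ and build the configuration from two of the quadric configurations of Example 5.4. Concretely, pick distinct $(q+1)$-st roots of unity $\mu_1, \mu_2$ and form the quadrics $Q_{\mu_i} = \mathbb V(\mu_i xw - yz)$, each carrying its quadric configuration $\mathcal L_{\mu_i} \cup \mathcal M_{\mu_i}$ on $X$. I propose to take
\[
\mathcal A \ = \ \mathcal L_{\mu_1} \cup \mathcal M_{\mu_2}, \qquad \mathcal B \ = \ \mathcal M_{\mu_1} \cup \mathcal L_{\mu_2},
\]
each a set of $2d$ lines of $X$. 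The four sets are pairwise disjoint because $Q_{\mu_1}$ and $Q_{\mu_2}$ share no lines of $X$ (their intersection is the union of four star chords).

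The first step is to verify pairwise skewness inside $\mathcal A$ (and inside $\mathcal B$ by symmetry). Skewness within a single ruling is immediate, so the content is the cross-check: for $L = \mathbb V(x-\alpha y,\ z-\mu_1\alpha w) \in \mathcal L_{\mu_1}$ and $M = \mathbb V(x-\beta z,\ y-\mu_2\beta w) \in \mathcal M_{\mu_2}$, solving the four linear equations simultaneously collapses to $\alpha\beta(\mu_2-\mu_1)w = 0$. Since $\mu_1 \neq \mu_2$ and $\alpha,\beta \neq 0$, this forces $w=0$ and then all coordinates vanish, so $L$ and $M$ are skew.

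The second and key step is to count, for each $L \in \mathcal A$, the number of lines of $\mathcal B$ that $L$ meets. For $L \in \mathcal L_{\mu_1}$ with parameter $\alpha$, the line $L$ meets all $d$ lines of $\mathcal M_{\mu_1}$ (the opposite ruling of $Q_{\mu_1}$) and is skew to every line of $\mathcal M_{\mu_2}$ by the previous paragraph. It remains to count $L' \in \mathcal L_{\mu_2}$ with $L \cap L' \ne \emptyset$. Parametrizing $L$ and $L'$ by their spanning points $[\alpha\!:\!1\!:\!0\!:\!0], [0\!:\!0\!:\!\mu_1\alpha\!:\!1]$ and $[\alpha'\!:\!1\!:\!0\!:\!0], [0\!:\!0\!:\!\mu_2\alpha'\!:\!1]$, one computes that they meet if and only if $\alpha' = \alpha$ (intersection at $[\alpha\!:\!1\!:\!0\!:\!0]$) or $\mu_1\alpha = \mu_2\alpha'$ (intersection at $[0\!:\!0\!:\!\mu_1\alpha\!:\!1]$). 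Since $(\mu_1/\mu_2)^{q+1}=1$, the second value $\alpha' = (\mu_1/\mu_2)\alpha$ still satisfies $\alpha'^{q+1} = -1$ and hence belongs to $\mathcal L_{\mu_2}$; moreover, $\mu_1 \neq \mu_2$ makes the two values of $\alpha'$ distinct. Hence $L$ meets exactly $2$ lines of $\mathcal L_{\mu_2}$, giving a total of $d+2$ lines of $\mathcal B$. The analogous count for $L \in \mathcal M_{\mu_2}$ follows by exchanging the roles of $(\mu_1,\mathcal L) \leftrightarrow (\mu_2,\mathcal M)$, and the symmetric analysis for lines of $\mathcal B$ is identical.

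The main obstacle is the final count: a naive inspection suggests only a single intersection between same-ruling lines of different quadrics $Q_{\mu_1}, Q_{\mu_2}$, producing the wrong total $d+1$. The subtle point is that there are in fact \emph{two} intersection possibilities, and both land at genuine star points of $X$ precisely because $(\mu_1/\mu_2)^{q+1}=1$ preserves the root-of-$-1$ condition on the parameter. Without this arithmetic coincidence, the configuration would not have the required incidence pattern, so this is where the characteristic-$p$ structure of the extremal surface does the essential work.
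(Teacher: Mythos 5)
Your construction is exactly the paper's: the same Fermat model, the same quadrics $Q_{\mu_i}=\mathbb V(\mu_i xw-yz)$ with $\mu_1\neq\mu_2$ roots of unity, the same cross-ruling sets $\mathcal A=\mathcal L_{\mu_1}\cup\mathcal M_{\mu_2}$ and $\mathcal B=\mathcal L_{\mu_2}\cup\mathcal M_{\mu_1}$, and the same determinant computation (with determinant $\alpha\beta(\mu_2-\mu_1)$) to establish skewness. Where you diverge is in verifying condition (2) of the definition. The paper proves a general lemma: for \emph{any} two quadric configurations on $X$ sharing no line, if both cross-unions are skew sets then they form a double $2d$; the substance of that lemma is ruling out tangency of a line $N\subset Q_1$ to the quadric $Q_2$, which is done geometrically via star points and star chords (a tangency would force a star chord to lie in a star plane, contradicting an earlier remark). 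You instead compute the incidences explicitly for this specific family: parametrizing $L\in\mathcal L_{\mu_1}$ and $L'\in\mathcal L_{\mu_2}$ by their spanning points, the singularity condition factors as $(\alpha-\alpha')(\mu_1\alpha-\mu_2\alpha')$, giving exactly the two parameters $\alpha'=\alpha$ and $\alpha'=(\mu_1/\mu_2)\alpha$, both of which satisfy $\alpha'^{\,q+1}=-1$ because $(\mu_1/\mu_2)^{q+1}=1$; this yields the count $d+2$ directly. Your computation is correct and self-contained, and arguably more elementary for this theorem; what the paper's route buys is reusability --- its Lemma~\ref{skew implies double} applies to arbitrary pairs of quadric configurations and is invoked again later (e.g.\ in Example~\ref{double2d-not-star-line}), whereas your argument is tied to the explicit $\mu$-family on the Fermat surface.
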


In fact, there a great many double $2d$'s on an extremal surface:   Corollary~\ref{lowerBound} will eventually  show that their number grows asympotically to  $\frac{1}{16}d^{14}$ as $d$ grows large.

We will prove Theorem~\ref{Quadric2} by constructing  explicit pairs of quadric configurations   whose union is a double $2d$. First, we speculate that every double $2d$ arises from pairs of quadrics:

\begin{conjecture}\label{DoubleConj}
Every double $2d$ on an extremal surface $X$ of degree $d$ consists of $4d$ lines that are the union of two quadric configurations on $X$. 
\end{conjecture}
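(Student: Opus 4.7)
The plan is to leverage Theorem~\ref{TripleQuads} to produce one quadric configuration inside the given double $2d$, and then recover the other one from the complement. Throughout, let $\mathcal A \cup \mathcal B$ be a double $2d$ on a smooth extremal surface $X$ of degree $d=q+1$.

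First, I would pick three pairwise skew lines $L_1, L_2, L_3 \in \mathcal A$, which is possible since $\mathcal A$ consists of $2d \geq 6$ mutually skew lines. By Theorem~\ref{TripleQuads}, they uniquely determine a quadric $Q$ with $Q \cap X = \mathcal L \cup \mathcal M$, where $\mathcal L$ is the ruling containing $L_1, L_2, L_3$ and $\mathcal M$ is a set of $d$ mutually skew lines each meeting every line of $\mathcal L$. The core goal is to show that in fact $\mathcal L \subseteq \mathcal A$ and $\mathcal M \subseteq \mathcal B$, so that $\mathcal L \cup \mathcal M$ is a quadric configuration wholly contained in the given double $2d$.

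Each $M \in \mathcal M$ meets the three lines $L_1, L_2, L_3 \in \mathcal A$, so it cannot belong to $\mathcal A$ by skewness; symmetrically, once $\mathcal M \subseteq \mathcal B$ is known, any putative $L \in \mathcal L \cap \mathcal B$ would meet lines of $\mathcal M \subseteq \mathcal B$, violating the skewness in $\mathcal B$. The only real content is therefore ruling out that some lines of $\mathcal L$ or $\mathcal M$ are \emph{foreign}, i.e.\ lines on $X$ outside the double $2d$. I would attack this by a double-counting argument: the total number of triples $(L, L', M)$ with $L, L' \in \mathcal A$ skew and $M \in \mathcal B$ meeting both equals $2d\binom{d+2}{2} = d(d+1)(d+2)$, while Corollary~\ref{skewpair-connected} caps the number of lines on $X$ meeting two fixed skew lines at $(d-1)^2+1$. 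Balancing these against the $(d+2)$-regularity of the bipartite incidence graph between $\mathcal A$ and $\mathcal B$ should force $\mathcal L \subseteq \mathcal A$ and $\mathcal M \subseteq \mathcal B$.

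Once $\mathcal A_1 := \mathcal L$ and $\mathcal B_1 := \mathcal M$ are identified, set $\mathcal A_2 := \mathcal A \setminus \mathcal A_1$ and $\mathcal B_2 := \mathcal B \setminus \mathcal B_1$, each of cardinality $d$. Every line in $\mathcal A_1$ already meets all $d$ lines of $\mathcal B_1$, so by the double $2d$ axiom it meets exactly two lines of $\mathcal B_2$; dually for $\mathcal B_1$ vs.\ $\mathcal A_2$. A quick bookkeeping check then shows that the bipartite graph restricted to $\mathcal A_2 \cup \mathcal B_2$ is forced to be complete bipartite $K_{d,d}$. I would then pick three skew lines in $\mathcal A_2$, apply Theorem~\ref{TripleQuads} once more, and verify that the resulting quadric configuration coincides with $\mathcal A_2 \cup \mathcal B_2$.

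The hard part will be the counting step ruling out foreign lines: a priori the quadric $Q$ spanned by an arbitrary triple from $\mathcal A$ could recruit lines on $X$ outside $\mathcal A \cup \mathcal B$, and the initial triple may have to be chosen with care, possibly by exploiting the star chord rigidity captured in Theorem~\ref{TransQ2l} and Corollary~\ref{dualstarlinesinquadric}. This asymmetric difficulty matches the stated scope of Theorem~\ref{ProgressConj}, which resolves the conjecture only for $d<5$ (handled by direct case analysis, starting with the classical Schl\"afli double six at $d=3$) and for $d>10$ (where the counting enjoys enough slack to work uniformly), leaving the intermediate range as the genuine obstacle.
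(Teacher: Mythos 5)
The statement you are proving is stated in the paper as a \emph{conjecture}: the authors only establish it for $d<5$ and $d>10$ (Theorem~\ref{ProgressConj}), verify $d=5$ by computer, and leave $5\le d\le 10$ open. Your outline is essentially their strategy for the partial result, and the step you flag as ``the hard part'' is exactly where it stops being a proof. Concretely: after choosing a triple of skew lines in $\mathcal A$ and taking the quadric $Q$ with $Q\cap X=\mathcal L\cup\mathcal M$, you assert that double counting against Corollary~\ref{skewpair-connected} ``should force'' $\mathcal L\subseteq\mathcal A$ and $\mathcal M\subseteq\mathcal B$. It does not, for two reasons. First, an \emph{arbitrary} triple will not do: the counting only produces a lower bound on the \emph{maximum}, over triples, of the number of common neighbours in $\mathcal B$, so the triple must be chosen greedily (the paper does this twice, first maximizing over pairs, then over triples). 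Second, even for the optimal triple the pigeonhole bound is $\ell\ge\bigl\lceil\lceil (d+1)(d+2)/(2d-1)\rceil\, d/(2d-2)\bigr\rceil$, which reaches the threshold $\ell\ge 5$ only for $d\ge 11$. The threshold $5$ is not negotiable: the paper's Lemma~\ref{boxlemma} needs three lines of $\mathcal A$ meeting \emph{five} common lines of $\mathcal B$ before its own incidence count ($3t+4d\ge 5d+10$, then $(k-d)(k-4)\ge 0$ with $k\ge 5$) can force $d$ lines of each of $\mathcal A$ and $\mathcal B$ onto $Q$ and rule out your ``foreign'' lines. Your extra ingredient, the cap of $(d-1)^2+1$ lines meeting a fixed skew pair, points the wrong way (it is an upper bound, and for $d\ge 4$ it exceeds $d+2$, so it adds nothing to the lower-bound pigeonhole).

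So the argument, carried out carefully, reproves Theorem~\ref{ProgressConj} rather than Conjecture~\ref{DoubleConj}; the range $5\le d\le 10$ remains a genuine gap, not a technicality. One further caution about your closing step: once $\mathcal L\cup\mathcal M$ is secured as a quadric configuration inside the double $2d$, the complement $\mathcal A_2\cup\mathcal B_2$ does form a second quadric configuration (this is the last paragraph of the paper's Lemma~\ref{boxlemma}), but your claim that each line of $\mathcal A_1$ then meets ``exactly two'' lines of $\mathcal B_2$ is backwards bookkeeping --- it meets $d$ lines of $\mathcal B_1$ and hence exactly two of $\mathcal B_2$ only because $d+(d+2)-2d=2$; the complete-bipartite structure on $\mathcal A_2\times\mathcal B_2$ follows from the fact that a line off $Q$ meets at most two lines of a ruling of $Q$, which is the argument the paper actually uses.
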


Towards Conjecture~\ref{DoubleConj}, we have proven
\begin{theorem}\label{ProgressConj}
  Every double $2d$ on  a degree $d$ extremal surface is a union of two quadric configurations when  $d>10$ or $d<5$. Moreover, for $d\geq 5$, if two quadrics determine some double $2d$, then no other pair of quadrics determines the same  double $2d$. \end{theorem}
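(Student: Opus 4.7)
My plan is to prove uniqueness first and then tackle the decomposition by analyzing a quadric through three skew lines of $\mathcal{A}$. For uniqueness, suppose $\mathcal{A}\cup\mathcal{B}$ decomposes as a union of quadric configurations in two ways, via unordered pairs $\{Q_1,Q_2\}$ and $\{Q_1',Q_2'\}$ of quadrics on $X$. Two distinct irreducible quadrics in $\mathbb{P}^3$ meet in a degree-four curve and hence share at most four lines (B\'ezout). So if $Q_1'\notin\{Q_1,Q_2\}$, then $|Q_1'\cap X|\leq|Q_1'\cap Q_1\cap X|+|Q_1'\cap Q_2\cap X|\leq 4+4=8$, contradicting $|Q_1'\cap X|=2d\geq 10$ for $d\geq 5$. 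Therefore $\{Q_1',Q_2'\}=\{Q_1,Q_2\}$.

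For the decomposition, pick three skew lines $L_1,L_2,L_3\in\mathcal{A}$, which exist since $|\mathcal{A}|=2d\geq 4$. By Theorem~\ref{TripleQuads} they determine a unique quadric $Q_1$ with $Q_1\cap X=\mathcal{L}_1\cup\mathcal{M}_1$ and $L_i\in\mathcal{L}_1$. Skewness forces $\mathcal{A}\cap\mathcal{M}_1=\emptyset$ (each line of $\mathcal{M}_1$ meets all three $L_i$), and since $\mathcal{B}$ is also skew, at most one of $\mathcal{B}\cap\mathcal{L}_1$, $\mathcal{B}\cap\mathcal{M}_1$ is nonempty. The \emph{good case} is $\mathcal{M}_1\subseteq\mathcal{B}$: writing $a=|\mathcal{A}\cap\mathcal{L}_1|$, each $L\in\mathcal{A}\cap\mathcal{L}_1$ contributes exactly two incidences to $\mathcal{B}\setminus\mathcal{M}_1$, while each $L\in\mathcal{A}\setminus\mathcal{L}_1$ contributes at least $d$ (since $L$ meets at most two lines of $\mathcal{M}_1$, through the points of $L\cap Q_1$); comparing with $|\mathcal{B}\setminus\mathcal{M}_1|(d+2)=d(d+2)$ gives $2a+d(2d-a)\leq d(d+2)$, which forces $a\geq d$ for $d\geq 3$. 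So $\mathcal{L}_1\subseteq\mathcal{A}$, and then the remaining $d$ skew lines of $\mathcal{A}\setminus\mathcal{L}_1$ and the $d$ skew lines of $\mathcal{B}\setminus\mathcal{M}_1$ pairwise meet across the sets (by the same count), producing the second quadric configuration.

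The heart of the proof is excluding the \emph{bad case} $\mathcal{M}_1\not\subseteq\mathcal{B}$. My plan is to iterate over choices of triple in $\mathcal{A}$: if the bad case held for every triple, then no quadric configuration $Q$ on $X$ would satisfy $|\mathcal{A}\cap Q|=d$, while every triple gives some $Q$ with $|\mathcal{A}\cap Q|\geq 3$. Combining this with the identity $\sum_Q\binom{|\mathcal{A}\cap Q|}{3}=\binom{2d}{3}$ (over quadric configurations $Q$ on $X$) and the bipartite incidence inequalities available in the bad case should force a contradiction for $d>10$. The small range $d\in\{2,3,4\}$ is handled by direct enumeration, with $d=3$ reducing to the classical Schl\"afli double-six structure on extremal cubic surfaces \cite{cubics}. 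The main obstacle is precisely the bad-case analysis for $d>10$: the bipartite inequalities alone give only soft control on $|\mathcal{A}\cap Q|$, and pushing the argument through requires careful combinatorial bookkeeping of how the at-most-two incidences of each $L\in\mathcal{A}\setminus\mathcal{L}_1$ with each ruling of $Q_1$ distribute. This is where most of the quantitative effort of the proof lies, and it is consistent with the conjecture remaining open in the crossover range $5\leq d\leq 10$.
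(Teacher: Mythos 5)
Your uniqueness argument is correct and is in fact cleaner than what the paper makes explicit: since all $2d$ lines of $Q_1'\cap X$ lie in $(Q_1\cap X)\cup(Q_2\cap X)$ and two distinct quadrics share at most the four lines of their degree-four intersection curve, $2d\le 8$ unless $Q_1'\in\{Q_1,Q_2\}$. Your ``good case'' analysis (assuming a full ruling $\mathcal M_1\subseteq\mathcal B$, then forcing $a=d$ and assembling the complementary quadric configuration) is also sound and matches the bootstrap in the paper's Lemma~\ref{boxlemma}. But the proposal has a genuine gap exactly where you flag ``the main obstacle'': you never actually exclude the bad case, i.e.\ you never produce a triple of lines in $\mathcal A$ for which a quadric configuration can be extracted. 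The plan of summing $\binom{|\mathcal A\cap Q|}{3}$ over quadric configurations and invoking unspecified ``bipartite incidence inequalities'' is not an argument, and the theorem's entire content for $d>10$ lives in that step. Moreover, your dichotomy is set up in a way that makes the bad case needlessly hard: you demand the \emph{entire} opposite ruling $\mathcal M_1$ lie in $\mathcal B$ before the counting starts, whereas the counting itself only needs a foothold of five lines.

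The paper closes this gap with a concrete two-step averaging on the $2d\times 2d$ incidence matrix (each row and column has exactly $d+2$ ones): fix $A_1$ with neighbors $B_1,\dots,B_{d+2}$; some $A_2$ meets at least $\lceil (d+1)(d+2)/(2d-1)\rceil$ of them; some $A_3$ then meets at least $\lceil\lceil (d+1)(d+2)/(2d-1)\rceil\, d/(2d-2)\rceil$ of those, and this is $\ge 5$ precisely when $d\ge 11$. Crucially, Lemma~\ref{boxlemma} is engineered to need only \emph{five} common neighbors: five lines of $\mathcal B$ meeting three skew lines of $\mathcal A$ must lie on the quadric $Q$ through the triple, and two successive counting inequalities (first $t\ge 3+\frac{d+1}{3}\ge 5$, then $(k-d)(k-4)\ge 0$ forcing $k\ge d$) upgrade five lines to a full ruling. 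You would need to supply an argument of this kind---or carry out your proposed double count in detail---before the $d>10$ case is proved; likewise ``direct enumeration'' for $d=4$ is not carried out (the paper handles $d=4$ by noting the same averaging gives four common neighbors, plus one extra incidence count to find a fourth line of $\mathcal A$ on $Q$).
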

We have also verified the conjecture by computer when $d=5$.

Before proving Theorems~\ref{Quadric2} and~\ref{ProgressConj}, we review the motivating example of cubic surfaces.

\subsection{Double Sixes on Cubics}\label{Cubics}
Every double six on a cubic surface---whether extremal or not---is a union of two quadric configurations.
For an arbitrary double six $\mathcal A \cup \mathcal B$ on a cubic surface $X$, there is a choice of coordinates making $X$ the blowup of six points on $\mathbb P^2$  (no three on a line, not all on a conic), 
and so that $\mathcal A$ consists of the six lines of exceptional divisors $\{E_1, \dots, E_6\}$ and $\mathcal B$  consists of the proper transforms $\{\tilde C_1, \dots, \tilde C_6\}$ of the six conics  in $\mathbb P^2$ through five of the six points \cite[Thm 8, pg. 366]{CubicBlowUp}. Here, $\tilde C_i$ denotes the proper transform of the conic that misses the point  blown up to get $E_i$.

Now, given any three lines in $\mathcal A$, say  $\{E_1, E_2, E_3\}$, there are  three lines, $\{\tilde C_4, \tilde C_5, \tilde C_6\}$,  in $\mathcal B$ that meet all of them. This says that the unique quadric surface $Q$ containing  $\{E_1, E_2, E_3\}$ must also contain   $\{\tilde C_4, \tilde C_5, \tilde C_6\}$. Likewise, the unique quadric  $Q'$ containing  $\{E_4, E_5, E_6\}$ must  contain   $\{\tilde C_1, \tilde C_2, \tilde C_3\}$.  So the quadrics $Q$ and $Q'$ both produce quadric configurations on $X$:
  $$
  Q\cap X = \{E_1, E_2, E_3, \tilde C_4, \tilde C_5, \tilde C_6\}\,\,\,\,\,\,\,\,\,\,\,\,\,\,\,\,\,\,\,\,
  {\text{and}} \,\,\,\,\,\,\,\,\,\,\,\,\,\,\,\,\,\,\,
   Q'\cap X = \{E_4, E_5, E_6, \tilde C_1, \tilde C_2, \tilde C_3\},
   $$ which together produce the double six
    $$ \mathcal A = \{E_1, E_2, E_3, E_4, E_5, E_6\}
    \,\,\,\,\,\,\,\,\,\, \,\,\,\,\,\,\,\,\,\,   {\text{and}} \,\,\,\,\,\,\,\,\,\,\,\,\,\,\,\,\,\,\, \mathcal B = \{\tilde C_1, \tilde C_2, \tilde C_3,  \tilde C_4, \tilde C_5, \tilde C_6\}.
    $$
    So every double six on a cubic surface is the union of two quadric configurations.
   
 \begin{remark}\label{NotUnique} The quadric configurations $Q$ and $Q'$ determining the double six $\mathcal A\cup \mathcal B$ on a cubic surface are not unique: there is a quadric containing any three of the six skew lines in $\mathcal A$ and another containing the remaining three, and the lines of $\mathcal B$ lie three in each of these two quadrics. Thus there are $\frac{1}{2}\binom{6}{3} = 10$ different pairs of quadrics determining the double six $\mathcal A\cup \mathcal B$.  This confirms that {\it some} restriction on $d$ is necessary in the uniqueness statement in Theorem~\ref{ProgressConj} above.
 \end{remark}
  
\begin{remark}\label{cubicstarlines}
The previous discussion applies to an arbitrary smooth cubic surface: each of its thirty-six double sixes is a union of two quadric configurations. 
However, for an {\it  extremal} cubic surface, the double sixes come from two quadrics of a particular form.  Specifically,  if $Q$ and $Q'$ are quadrics  on an extremal cubic surface which together give a double six, then $Q \cap Q'$ is the union of four lines.

To see this, observe that $Q\cap Q'\cap X$ consists of twelve {\it distinct } points---otherwise, one line of $Q\cap X$ would intersect a line from both rulings of $Q'\cap X$ (or vice versa), violating the skewness  condition for  a double six. These twelve points are star (Eckardt) points as they lie at the intersection of a line in $Q \cap X$ with a line in $Q' \cap X$.
Now, each of these twelve  star points lies on only one line in $X\cap Q$, again by skewness, so these twelve star points lie  on star chords of $Q\cap X$. Since there are only two  star chords in each ruling (Remark~\ref{QuadricToSkew2}), each containing exactly $q+1=3$ star points, these twelve  points  lie three each  on the four star chords on $Q$. Likewise, the same argument replacing $Q$ by $Q'$ shows that the twelve star points lie three each  on the four star chords on $Q'$. We conclude that $Q\cap Q'$ consists of the  four  shared star chords for $X$.
\end{remark}


\subsection{\texorpdfstring{The existence of double $2d$'s on extremal surfaces}{The existence of double 2d's on extremal surfaces}}
\begin{proof}[Proof of Theorem~\ref{Quadric2}]
Choose coordinates so that  the extremal surface $X$ is defined by $\fermat$.  

Fix $\mu$,  a $(q+1)$-st  root of unity. As we saw in Example~\ref{exQuadric2}, 
 the lines 
$$\mathcal L_{\mu} := \{\mathbb V(x-\alpha y,  z-\mu \alpha w) \,\, \,| \,\,\, \alpha^{q+1} = -1\}$$
 and $$\mathcal M_{\mu} =  \{\mathbb V(x-\beta z,  y-\mu \beta w) \,\, \,\, |\,\,\, \beta^{q+1} = -1 \}$$
 form a quadric configuration cut out by the quadric $Q_{\mu}= \mathbb V(\mu xw -yz)$.

We claim that if  $\mu_1$ and $ \mu_2$ are  {\it distinct} $(q+1)$ roots of unity, then the sets
$$\mathcal{A} := \mathcal L_{\mu_1} \cup  \mathcal M_{\mu_2}
\,\,\,\,\,\,{\text{and}} \,\,\,\,\,\,\,\,
 \mathcal{B} := \mathcal L_{\mu_2} \cup \mathcal M_{\mu_1}$$ {\it {  together form a double $2(q+1)$}.  }
 
 To see that $\mathcal A$ consists of skew lines,  first observe that the  lines of  $ \mathcal L_{\mu_1}$  are mutually skew, as they lie in the same ruling of a quadric. To see that  each $L\in  \mathcal L_{\mu_1}$ is  skew to  every  $M\in  \mathcal M_{\mu_2}$,  we check  that 
  the ideal of their intersection, 
$\langle x-\alpha y,  \ z-\mu_1\alpha  w, \  x-\beta z,  \ y-\mu_2 \beta w\rangle,$
is generated by four linearly independent linear forms. For this, it suffices to show that the matrix
$$
\begin{bmatrix}
1 & -\alpha & 0 & 0 \\
0 & 0 & 1 & -\mu_1 \alpha \\
1 & 0 & -\beta & 0 \\
0 & 1 & 0 & -\mu_2 \beta\\
\end{bmatrix}, 
$$
whose rows are the coefficients of the linear forms, has full rank. But this is clear, since its determinant is 
$\alpha \beta (\mu_2-\mu_1)$.
A symmetric argument shows that also $\mathcal B$ consists  of skew lines.

Now that we know $\mathcal A$ and $\mathcal B$ are skew sets, 
the proof of Theorem~\ref{Quadric2} will be complete once we have proved the following general lemma. 

\begin{lemma}\label{skew implies double}
Let $X$ be a smooth extremal  surface of degree $d$. Let  $\mathcal{Q}_1$ and  $\mathcal{Q}_2$ be two quadric configurations on $X$ that do not share a line (on $X$). Write $\mathcal{Q}_1 = \mathcal{L}_1 \cup \mathcal{M}_1$ and $\mathcal{Q}_2 = \mathcal{L}_2 \cup \mathcal{M}_2$ for  the decomposition of each quadric configuration into the lines of the two rulings. 
Then   $\mathcal{A} = \mathcal{L}_1 \cup \mathcal{M}_2$ and $\mathcal{B} = \mathcal{L}_2 \cup \mathcal{M}_1$  form a double $2d$ on $X$ if (and only if) both $\mathcal{A}$ and $\mathcal{B}$ are skew sets.
\end{lemma}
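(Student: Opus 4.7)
The reverse direction is immediate from condition~(1) of Definition~\ref{double}, so assume both $\mathcal{A}$ and $\mathcal{B}$ are skew sets. Since $\mathcal{Q}_1$ and $\mathcal{Q}_2$ share no lines on $X$, the sets $\mathcal{L}_1$ and $\mathcal{M}_2$ are disjoint (as are $\mathcal{L}_2$ and $\mathcal{M}_1$), so each of $\mathcal{A}$ and $\mathcal{B}$ consists of $2d$ distinct lines. It remains only to verify condition~(2). My plan is to fix a line $L \in \mathcal{L}_1 \subset \mathcal{A}$ and show that $L$ meets exactly $d+2$ lines of $\mathcal{B} = \mathcal{L}_2 \cup \mathcal{M}_1$; the three remaining cases (for $M \in \mathcal{M}_2 \subset \mathcal{A}$, and for the analogous lines in $\mathcal{B}$) follow by the manifest symmetry of the hypothesis under swapping the labels $\mathcal{L} \leftrightarrow \mathcal{M}$ inside each quadric configuration and/or the indices $1 \leftrightarrow 2$.

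Because $L$ lies in the ruling of $Q_1$ opposite to $\mathcal{M}_1$, it already meets all $d$ lines of $\mathcal{M}_1$, so the task reduces to showing that $L$ meets exactly two lines of $\mathcal{L}_2$. The natural tool is intersection theory on $X$. Writing $H$ for the hyperplane class on $X$, the quadric section $\mathcal{Q}_2 = X \cap Q_2$ is linearly equivalent to $2H$, so
\begin{equation*}
L \cdot \mathcal{Q}_2 \;=\; 2\,(L \cdot H) \;=\; 2.
\end{equation*}
Decomposing $\mathcal{Q}_2 = \mathcal{L}_2 + \mathcal{M}_2$ as a reduced divisor on $X$ and applying bilinearity gives
\begin{equation*}
2 \;=\; L \cdot \mathcal{Q}_2 \;=\; L \cdot \mathcal{L}_2 \;+\; L \cdot \mathcal{M}_2 .
\end{equation*}
The skewness hypothesis on $\mathcal{A}$ forces each pairwise intersection of $L$ with a line of $\mathcal{M}_2$ to be empty, so $L \cdot \mathcal{M}_2 = 0$ and hence $L \cdot \mathcal{L}_2 = 2$.

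The final step is to convert this intersection number into an honest count of incident lines. Since $\mathcal{Q}_1$ and $\mathcal{Q}_2$ share no lines, every $L' \in \mathcal{L}_2$ is distinct from $L$; two distinct lines on the smooth surface $X$ meeting at a point $p$ have independent tangent directions inside $T_pX \subset \mathbb{P}^3$ and so meet transversely at a single point. Consequently $L \cdot L' \in \{0,1\}$, with value $1$ exactly when the lines meet, and therefore $L \cdot \mathcal{L}_2 = 2$ means $L$ meets exactly two lines of $\mathcal{L}_2$, giving the desired total of $d+2$. The one point requiring care when writing this out is the conversion in this last paragraph: a priori $L$ might be tangent to the quadric $Q_2$ at an intersection point in $\mathbb{P}^3$, but because $L \cdot \mathcal{L}_2$ is computed on the smooth surface $X$ rather than in $\mathbb{P}^3$, only the transverse meeting of lines inside $X$ is recorded, and the bookkeeping is unaffected.
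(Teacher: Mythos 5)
Your proof is correct, but it takes a genuinely different route from the paper's. The paper works in $\mathbb{P}^3$: since $N\in\mathcal{L}_1$ is not on $Q_2$, the scheme $N\cap Q_2$ has length two, and if it consists of two distinct points one gets the two lines of $\mathcal{L}_2$ immediately; the bulk of the paper's argument is then devoted to ruling out the degenerate case where $N$ is tangent to $Q_2$, which it does via the special geometry of extremal surfaces (the tangency point would be a star point, the line of $Q_2$ through it in the opposite ruling would be a star chord lying in the star plane, contradicting Remark~\ref{caution}). You instead compute on the surface $X$ itself: the divisor $X\cap Q_2$ lies in $|2H|_X|$ and is the \emph{reduced} sum of the $2d$ lines (this is exactly what the proposition justifying the name ``quadric configuration'' establishes, and it is the load-bearing input of your argument), distinct lines on a smooth surface meet transversely so each pairwise intersection number is $0$ or $1$, and skewness of $\mathcal{A}$ kills the $\mathcal{M}_2$ terms, forcing exactly two lines of $\mathcal{L}_2$ to meet $L$. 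The tangency issue never arises as a separate case: a tangency of $L$ to $Q_2$ at $p$ would force two lines of the reduced divisor through $p$, one from each ruling, which skewness already forbids — but you do not even need to observe this, since the divisor identity does all the bookkeeping. Your approach is shorter and uses only standard intersection theory plus the reducedness of the quadric section, at the cost of not extracting the geometric fact (that $N$ is never tangent to $Q_2$) that the paper's argument makes explicit. One presentational suggestion: cite the reducedness of $X\cap Q_2$ explicitly when you write $\mathcal{Q}_2=\mathcal{L}_2+\mathcal{M}_2$ as divisors, and note that $L$ is not a component of this divisor (which you do, via the no-shared-line hypothesis), since both are needed for the bilinearity step to count points.
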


\begin{proof}[Proof of Lemma~\ref{skew implies double}]
Since $\mathcal{Q}_1$ and  $\mathcal{Q}_2$ have no common line,  there are $4d$ lines in $\mathcal Q_1\cup \mathcal Q_2$, and $2d$ lines in each of $\mathcal A$ and $\mathcal B$.   Because we are given that 
 $\mathcal{A}$ and $\mathcal{B}$ are each skew sets,   we need only check condition (2) of Definition~\ref{double} to verify that  $\mathcal{A}\cup \mathcal{B}$ is a double $2d$.
 
 To this end, take any $N\in \mathcal A$. Without loss of generality, assume $N\in \mathcal L_1$. We need to show that $N$ intersects exactly $d+2$ lines in $\mathcal B$. Since $N$ lies in one ruling of the quadric $Q_1$ determining $\mathcal Q_1$,    the line $N$ intersects the $d$ lines of the opposite ruling $\mathcal M_1\subset \mathcal B$. Thus we need to show that 
 $N$ intersects exactly two  lines of $\mathcal L_2$.
  
  Since $N$ does not lie on the quadric $Q_2$ determining $\mathcal Q_2$ (remember $\mathcal Q_1\cap \mathcal Q_2=\emptyset$), its intersection multiplicity with $Q_2$ is two. If   
 $N$ meets $Q_2$ in two distinct points, we are done:  $N$ must meet exactly  two of the lines in the  ruling $\mathcal L_2$ since it does not meet any line of the ruling $\mathcal M_2$ by our assumption that $\mathcal A$ is a skew set. 
 
It remains to show that $N$ can not be tangent to $Q_2$. If, on the contrary,  $N$ is tangent to $Q_2$ at some point $p$, then  $N\subset T_pQ_2$.
 Because $p\in Q_2\cap X$, and $Q_2\cap X$ is a union of lines, the point $p$   lies  on some line $M$ in    $Q_2\cap X$. In particular,  $p$ is a star point since  it is the intersection of the two lines  $M$ and $N$
 on $X$. 
Furthermore,  since both $N$ and $M$ are in 
the tangent plane $T_pQ_2$, as well as in the star plane $T_pX$, we have $T_pX=T_pQ$. But now consider the unique line $\ell$ through the star point $p$ on  $Q_2$ in the opposite ruling from  $M$.
We know $\ell$ is not on $X$, for otherwise, $p\in \ell\subset Q_2\cap X$, which means  $p$ lies on lines  in {\it both} rulings of $\mathcal Q_2$, violating skewness.
By Lemma~\ref{StarPointsQuadric}, we conclude that  $\ell$ is a star chord  through  $p$, and being on $Q_2$, also $\ell\subset T_pQ_2=T_pX$. But no star chord through a star point $p$ can lie in the star plane $T_pX$ (Remark~\ref{caution}). This contradiction ensures that  $N$ is not tangent to $Q_2$, and the proof is complete.  \end{proof}
\end{proof}


\subsection{Pairs of Quadrics containing a common line}
  The double $2d$ constructed in the proof of Theorem~\ref{Quadric2} is obtained from two quadric configurations  whose quadric surfaces intersect in  four lines.
  These are an abundant type of double $2d$'s---encompassing all the double sixes in the case of extremal cubics.

\begin{theorem}\label{double2d-star-line} Let $X$ be a smooth extremal surface of degree $d$, and let 
 $Q$ and $Q'$ be distinct quadrics defining quadric configurations on $X$.

Assume that $Q$ and $Q'$   share a common line, but share no line on  $X$.
Then
\begin{enumerate}
\item[(i)]  The $4d$  lines of $ (Q\cap X) \cup (Q'\cap X)$ can be split into two sets of $2d$ lines forming a double $2d$;
\item[(ii)]
The intersection $Q\cap Q'$ consists of four star chords  $\{\ell, m, \ell', m'\}$, where $\{\ell, \ell'\} $ and
$\{m,  m'\} $ are dual chord pairs in opposite rulings.
\end{enumerate}
\end{theorem}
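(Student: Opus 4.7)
The plan is to establish part (ii) first and then deduce (i) using Lemma~\ref{skew implies double} together with the structure produced in (ii).

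For (ii), let $\ell$ be the common line. Write $Q \cap X = \mathcal L_1 \cup \mathcal M_1$ and $Q' \cap X = \mathcal L_2 \cup \mathcal M_2$, choosing labels so that $\ell$ lies in the same ruling as $\mathcal L_1$ on $Q$ and as $\mathcal L_2$ on $Q'$. Then on $Q$ the line $\ell$ meets each of the $d$ lines of $\mathcal M_1$ in one point, producing the $d$ points of $\ell \cap X$; on $Q'$ it meets each of the $d$ lines of $\mathcal M_2$, yielding the same $d$ points. Hence every point of $\ell \cap X$ lies on both a line of $\mathcal M_1$ and a line of $\mathcal M_2$---two distinct lines of $X$, since $Q$ and $Q'$ share no line on $X$---and so is a star point. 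By Lemma~\ref{StarPointsQuadric}, $\ell$ is a star chord. Its dual $\ell'$ (Definition~\ref{dual}) lies on both $Q$ and $Q'$, in the same ruling as $\ell$, by Corollary~\ref{dualstarlinesinquadric}. Viewing $Q \cap Q'$ as a bidegree $(2,2)$-divisor on $Q \cong \mathbb{P}^1 \times \mathbb{P}^1$ and removing the two $(1,0)$-components $\ell$ and $\ell'$ leaves a residual of bidegree $(0,2)$, so $Q \cap Q'$ contains two further lines $m$ and $m'$ in the opposite ruling. The exact argument just given for $\ell$---with $\mathcal L_1,\mathcal L_2$ playing the role of $\mathcal M_1,\mathcal M_2$---shows $m$ is a star chord. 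Its dual again lies on both $Q$ and $Q'$ in the ruling of $m$, hence belongs to $\{m, m'\}$; since a star chord is skew to its dual, it must equal $m'$.

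For (i), set $\mathcal A = \mathcal L_1 \cup \mathcal M_2$ and $\mathcal B = \mathcal M_1 \cup \mathcal L_2$. By Lemma~\ref{skew implies double} it suffices to verify that $\mathcal A$ and $\mathcal B$ are each skew sets, since skewness within each $\mathcal L_i$ or $\mathcal M_i$ is built into the quadric configurations. For $\mathcal A$: take $L \in \mathcal L_1$ and $M \in \mathcal M_2$. Since $M$ is a line of $X$ not on $Q$ (otherwise $M$ would be a shared line of $Q$ and $Q'$ on $X$), Bezout gives $|M \cap Q| = 2$. On $Q'$, the line $M$ lies in the ruling opposite $\ell$ and $\ell'$, so it meets each of $\ell$ and $\ell'$ in one point; these two points lie on $Q$, and therefore $M \cap Q$ equals the pair $\{M\cap\ell,\, M\cap\ell'\}$. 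Through either point the two lines of $Q$ are the ruling-one component ($\ell$ or $\ell'$) and the unique ruling-two line of $Q$ through that point, which is a member of $\mathcal M_1$ (since each point is a star point lying on a line of $\mathcal M_1$); no line of $\mathcal L_1$ passes through either point. Hence $L \cap M \subseteq M \cap Q$ is empty, so $L$ and $M$ are skew. A symmetric argument with the roles of $Q$ and $Q'$ interchanged shows $\mathcal B$ is skew.

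The main delicacy is bookkeeping the ruling labels so that the pairing $\mathcal A = \mathcal L_1 \cup \mathcal M_2$ (rather than $\mathcal L_1 \cup \mathcal L_2$) is the one that is skew; this is pinned down precisely by the fact that $\ell$ selects a ruling on each quadric. Once that is set up, the geometric crux is the observation that the two points of $M \cap Q$ are forced to be $M \cap \ell$ and $M \cap \ell'$, which pins the other $Q$-line through each such point to $\mathcal M_1$ and rules out any meeting with $\mathcal L_1$.
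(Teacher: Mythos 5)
Your argument for (ii) is correct and is essentially the paper's: show the common line $\ell$ meets $d$ lines from each configuration, hence (since $|\ell\cap X|\le d$) passes only through star points and is a star chord; bring in its dual via Corollary~\ref{dualstarlinesinquadric}; and identify the residual bidegree-$(0,2)$ curve of $Q\cap Q'$ as a second dual pair $m,m'$ (your observation that the dual of $m$ must lie in the residual and is skew to $m$ correctly disposes of the double-line case, just as the paper does). For (i) you take a genuinely different route. The paper normalizes coordinates using Theorem~\ref{TransQ2l} so that $X$ is Fermat and $Q,Q'$ are among the quadrics $\mathbb V(\mu xw - yz)$ with $\mu^{d}=1$, and then cites the explicit skewness computation from the proof of Theorem~\ref{Quadric2}. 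You instead argue synthetically: for $M\in\mathcal M_2$ and $L\in\mathcal L_1$, the scheme $M\cap Q$ is exactly the two points $M\cap\ell$ and $M\cap\ell'$ (distinct because dual star chords are skew), and since $L$ lies in the same ruling of $Q$ as $\ell$ and $\ell'$ it misses both points, whence $L\cap M=\emptyset$; the symmetric argument handles $\mathcal B$, and Lemma~\ref{skew implies double} finishes. This is correct and self-contained: it avoids the transitivity machinery and the explicit Fermat computation, and it makes transparent why the ruling of $\ell$ dictates which halves of the two configurations pair into a skew set. The trade-off is that the paper's normalization gives the extra information (used in Corollary~\ref{lowerBound}) that all such pairs $Q,Q'$ sit in the pencil $\mathbb V(\mu xw-yz)$, which your argument does not reproduce; but for the statement as given, your proof of (i) is complete.
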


  Importantly, not all double $2d$'s are of the type guaranteed by Theorem~\ref{double2d-star-line}; see Example~\ref{double2d-not-star-line}.

Before proving Theorem~\ref{double2d-star-line}, we deduce the following corollary bounding below the total number of  double $2d$s on an extremal surface.

\begin{corollary}\label{lowerBound}
An extremal surface of degree $d = q+1\geq 5 $ contains at least 
$$
\frac{1}{16}(q^3+1)(q^2+1)(q-1)^2q^7  
$$
collections of double $2d$'s.
\end{corollary}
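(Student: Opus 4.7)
The plan is to produce many ``good'' pairs $\{Q, Q'\}$ of distinct quadric configurations on $X$ satisfying the hypotheses of Theorem~\ref{double2d-star-line}, namely sharing a common line but no line of $X$. Each such pair yields a double $2d$ by that theorem, and for $d \geq 5$ the uniqueness assertion in Theorem~\ref{ProgressConj} guarantees distinct good pairs give distinct double $2d$s; hence the count of good pairs is a lower bound on the number of double $2d$s.

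The key construction proceeds through pairs of star chords lying in opposite rulings of some quadric configuration. By Proposition~\ref{QuadricToSkew}, every quadric configuration contains $q^2-q$ star chords in each of its two rulings, yielding $(q^2-q)^2$ unordered pairs $\{\ell, m\}$ of star chords in opposite rulings. By Corollary~\ref{quadrics intersecting in star chords} each such pair lies in exactly $q+1$ quadric configurations (necessarily in opposite rulings of each, since the ruling of a star chord in a given configuration is pinned down by its dual via Corollary~\ref{dualstarlinesinquadric}). Using the count of quadric configurations from Corollary~\ref{QuadricCount}, the total number of unordered pairs $\{\ell, m\}$ is
\[
\frac{1}{q+1}\cdot\frac{1}{2}(q^3+1)(q^2+1)q^4\cdot (q^2-q)^2 \;=\; \frac{(q^3+1)(q^2+1)q^6(q-1)^2}{2(q+1)}.
\]
For each such pair, the $q+1$ configurations through $\{\ell,m\}$ give $\binom{q+1}{2}$ unordered pairs $\{Q,Q'\}$ of distinct quadric configurations.

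I next claim each such $\{Q,Q'\}$ is good. By Corollary~\ref{dualstarlinesinquadric}, the dual chords $\ell', m'$ also lie on both $Q$ and $Q'$, so $Q\cap Q' \supseteq \{\ell,\ell',m,m'\}$. These four star chords are distinct, since $\ell,\ell'$ lie in one ruling and $m,m'$ in the opposite ruling of each quadric. Since Bezout forces $\deg(Q\cap Q')=4$, the containment is in fact equality: $Q\cap Q' = \ell \cup \ell' \cup m \cup m'$, a union of four star chords containing no line of~$X$. Hence the hypotheses of Theorem~\ref{double2d-star-line} are satisfied. Conversely, by Theorem~\ref{double2d-star-line}(ii) every good pair $\{Q,Q'\}$ has $Q\cap Q'$ equal to four star chords forming two dual pairs in opposite rulings, so $\{Q,Q'\}$ arises in the above count from exactly the four unordered opposite-ruling pairs $\{\ell,m\}, \{\ell,m'\}, \{\ell',m\}, \{\ell',m'\}$.

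Dividing by this overcount factor of $4$ gives
\[
\#\{\text{good pairs}\} \;=\; \frac{1}{4}\cdot \frac{(q^3+1)(q^2+1)q^6(q-1)^2}{2(q+1)} \cdot \binom{q+1}{2} \;=\; \frac{(q^3+1)(q^2+1)(q-1)^2 q^7}{16},
\]
which is exactly the claimed lower bound once Theorem~\ref{ProgressConj} is applied to distinguish their double $2d$s. The only subtle point, and the step most prone to slipping, is the Bezout argument producing $Q\cap Q'$ as precisely four star chords: this is what rules out any shared line of $X$ and so places our constructed pairs within the scope of Theorem~\ref{double2d-star-line}. It is a genuinely unavoidable step, since a naive count of pairs $\{Q,Q'\}$ sharing a star chord would include ``bad'' pairs whose intersection structure is unknown to us; the route via opposite-ruling star chord pairs sidesteps this issue entirely.
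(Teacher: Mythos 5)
Your proof is correct and follows essentially the same route as the paper's: both count the unordered pairs of quadric configurations meeting in four star chords using Proposition~\ref{QuadricToSkew}, Corollary~\ref{quadrics intersecting in star chords}, Corollary~\ref{QuadricCount}, Theorem~\ref{double2d-star-line}, and the uniqueness part of Theorem~\ref{ProgressConj}. You merely organize the bookkeeping through the global set of opposite-ruling star-chord pairs rather than fixing $Q$ first, and your explicit Bezout check that $Q\cap Q'$ is exactly the four star chords is a nice touch of rigor that the paper leaves implicit.
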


\begin{proof}[Proof of Corollary]
By Theorem~\ref{ProgressConj}, if two quadrics determine a double $2d$ on an extremal surface of degree $d\geq 5$, then they are unique. So  we can  prove  Corollary~\ref{lowerBound} by counting the 
pairs of quadrics $\{Q, Q'\}$ determining quadric configurations whose intersection consists of four star chords (Theorem~\ref{double2d-star-line}).

Fix one quadric $Q$  giving a quadric configuration on $X$.
There are $(q^2-q)^2$ choices of pairs of star chords $\{\ell, m\}$ on $Q$, one in each ruling, by Proposition 
\ref{QuadricToSkew}. Since the dual of each star chord on $Q$ is also on $Q$,
there are $\frac{(q^2 - q)^2}{4}$ choices for sets of star chords $\{\ell,\ell', m, m'\}$ on $Q$, where 
$\ell$ and $m$ are  in opposite rulings  and $\ell', m'$ are their duals. 

There are exactly $q$ additional quadrics, besides $Q$, that contain $\{\ell,\ell', m, m'\}$ and define a quadric configuration (Corollary~\ref{quadrics intersecting in star chords}). So  there are exactly $\frac{q^3(q-1)^2}{4}$ quadrics $Q'$ defining quadric configurations such that $Q\cap Q'$ is the union of two star chords and their duals.

Finally, multiplying by the total number of choices for $Q$ (provided by Corollary~\ref{QuadricCount}), we get 
\[\frac{1}{2}(q^3+1)(q^2+1)q^4\cdot\frac{1}{4}q^3(q-1)^2 = \frac{1}{8}(q^3+1)(q^2+1)(q-1)^2q^7\]
 {\it ordered}  pairs of quadric configurations whose intersection is four star chords.
This counts each pair twice so the result follows.
\end{proof}

\begin{proof} [Proof of Theorem~\ref{double2d-star-line}]
Suppose $\ell \subset Q  \cap Q'$ but $\ell \not\subset X$.  Because $\ell$ is in some ruling on each of $Q$ and $Q'$, $\ell$ must intersect $d$ lines on $X \cap Q$ and $d$ lines on $X \cap Q'$.  By hypothesis, these lines are distinct, so $\ell$ intersects $2d$ lines on $X$.  Now because  $\ell \cap X$ can be at most $d$ points, $\ell$  simultaneously intersects $X$ at  a line on $X \cap Q$  and  a line on $X\cap Q',$ so $\ell$ intersects $X$ at a star point.   So $\ell$ passes through $d$ star points and is a star chord. 

Let $\ell'$ be the dual star chord to $\ell$. We know $\ell' \subset Q \cap Q'$, by  Corollary~\ref{dualstarlinesinquadric}.  Since $\ell$ and $\ell'$ are  skew, they are in the same ruling on $Q$ and also  in the same ruling on $Q'$, which means that $\ell \cup \ell'$ is a curve of bi-degree $(2,0)$ on each quadric. 
Since  $Q \cap Q'$ is a curve of bidegree $(2,2)$ on each quadric, the residual intersection curve has bidegree $(0,2)$ in each quadric. Since homogeneous polynomials in two variables over an algebraically closed field factor into linear terms, this residual curve is either two distinct lines, or a double line.  In particular, it contains some line $m$, which, by the argument above, must be a star chord. Now again by Corollary~\ref{dualstarlinesinquadric}, the residual intersection must be two dual star chords $m$ and $m'$. This proves (ii).
 
 To prove (i), we use Theorem~\ref{TransQ2l} to chose coordinates so that $X$ is the Fermat extremal surface, $Q$ is the quadric defined by $xw=yz$, and $\ell$ and $m$ are the lines $\mathbb V (x, z)$ and $\mathbb V(w, z)$,
 respectively.  In this case, we have already computed  (in the proof of Corollary~\ref{quadrics intersecting in star chords})  that 
 the quadrics containing $\{\ell,  \ell', m, m'\}$ and defining quadric configurations are all 
of the form $\mathbb V(\mu xw - yz) $ where $\mu^{d}=1$, and that any two such quadrics define a double $2d$ (in  the proof of Theorem~\ref{Quadric2}).
\end{proof}

\begin{remark}\label{d=3}
The bound in Corollary~\ref{lowerBound} is not valid when $d$ is less than $5$ because in this case, there can be multiple pairs of quadric configurations that determine the same double $2d$. For example, every double six on a cubic surface  can be split into the union of two quadric configuration in ten different ways 
(Remark~\ref{NotUnique}).  Note that dividing the bound provided by Corollary~\ref{lowerBound} by ten, we get a lower bound of $ 36$ double sixes on a cubic surface,  recovering the fact that {\it all} double sixes on an extremal cubic comes from quadrics sharing star chords (Remark~\ref{cubicstarlines}).

Similarly, when $d = 4$, there are double eights that  split into the union of two quadric configurations  in multiple ways.
For example, the double eight on the Fermat quartic defined by the two quadrics 
$Q_1=\mathbb V(xw-yz)$ and 
$Q_2=\mathbb V(xw+yz)$ can also be given by two different quadrics $Q_3$ and $Q_4$,
 as one can check by examining the intersection matrix for the sixteen lines of $(Q_1\cap X)\cup (Q_2\cap X)$ to find a different grouping into lines in two quadrics.
\end{remark}

\begin{example}\label{double2d-not-star-line} 
We now construct an  example 
of a double eight on a quartic extremal surface that can not be given two quadrics sharing a line. This shows that  not every double $2d$ on an extremal surface is of the special type in Theorem~\ref{double2d-star-line}. 

We work on the Fermat quartic, $X = \mathbb{V}(x^4+y^4+z^4+w^4)$ in characteristic three.
The quadrics
$
 Q_1  = \mathbb{V}(xw-yz) $
 and 
 $ Q_2  = \mathbb V(x^2+xy+xz-xw-y^2+yz+yw+z^2-zw-w^2)$
 both give quadric configurations on $X$. 
The quadric configuration $X\cap Q_1$ is the union $\mathcal{L}\cup \mathcal{M}$ where 
\[\mathcal{L} = \{\mathbb{V}(x-\alpha y,z-\alpha w)\,\, | \,\, \alpha^4=-1\}\,\,\,
\text{ and }\,\,\,
\mathcal{M} = \{\mathbb{V}(x-\alpha z,y-\alpha w) \,\, | \,\, \alpha^4=-1\},\]
as we computed in Example~\ref{exQuadric2}.
The quadric configuration $X\cap Q_2$ is the union $\mathcal{N}\cup \mathcal{P}$ where 
\[\mathcal{N}=\{\mathbb{V}(x-aw,y-az), \mathbb{V}(x-\overline{a}w,y-\overline{a}z), \mathbb{V}(-x-y+w,x-y-z), \mathbb{V}(-x-y-w,x-y+z)\}\text{ and }\]
\[\mathcal{P} = \{\mathbb{V}(x+ay,z-\overline{a}w), \mathbb{V}(x+\overline{a}y,z-aw),\mathbb{V}(-x+y+w,-x-y+z), \mathbb{V}(-x-y+w,x-y+z)\},\]
where $a$ and $\overline{a}$  are the roots in $k$ of the polynomial $T^2-T-1$ over $\mathbb F_3$.
We leave it to the reader to directly verify these eight lines all lie on both $Q_2$ and $X$.

The set    $\mathcal  A\cup \mathcal B$ is a double eight, where 
$\mathcal A = \mathcal L \cup \mathcal N $ and $\mathcal B = \mathcal M\cup  \mathcal P$. To check this, 
it suffices to check that the lines in $\mathcal L$ and skew to those in $\mathcal N$, and similarly
that the lines in $\mathcal M$ are skew to those in $\mathcal P$ (Lemma~\ref{skew implies double}), which can be directly verified. 
 
It remains to check that $Q_1$ and $Q_2$ do not share any line. If they did, then there are two shared lines in each ruling (Theorem~\ref{double2d-star-line}). So it suffices to show an arbitrary  line $\ell=\{\lambda s: s: \lambda t : t] \,\,\, [s:t]\in \mathbb P^1\}$ in one of the rulings of $Q_1$ can not lie on $Q_2$. If $\ell\subset Q_2$, then the points $[0:0:\lambda:1]$,  $[\lambda:1:0:0]$
and $[\lambda:1:-\lambda:-1]$ in $\ell$ must all 
 lie on $Q_2$. Plugging into the equation for $Q_2$ produces the constraints
\[
\lambda^2 -\lambda - 1  = 0, \,\,\,\,\,\lambda^2 +\lambda - 1  = 0,  \,\,\,\,\, {\text{and}}\,\,\,\,\,\, \lambda^2=0.
\]
Because these three equations are inconsistent, we conclude that $\ell$ does not lie on $Q_2$. 

Finally, we must show that the double eight $\mathcal A \cup \mathcal B$ can not be given by {\it any other} pair of quadrics that {\it do} share a line. To this end, assume on the contrary  that 
$\mathcal A \cup \mathcal B$ is  given by quadrics $Q_3$ and $Q_4$, and that $\ell \subset Q_3\cap Q_4$
for some line $\ell$. Furthermore, since $Q_1$ and $Q_2$ share no line, we may assume that $\ell\not\subset Q_1$;  in particular, $\ell$ intersects two lines in each ruling of $Q_1$.
Because $\ell$ lies in  one ruling of  each of  $Q_3$ and of $Q_4$, \  $\ell$ meets each in a set of 
 {\it eight} skew  lines in the double eight $(Q_3\cap X)\cup (Q_4\cap X) = \mathcal A \cup \mathcal B$. Since at most two of these eight intersection points are on $Q_1$, we know $\ell$ intersects at least six of the lines in $Q_2$, and so $\ell \subset Q_2$. But this is impossible:  $\ell$ lies in one of the rulings of $Q_2$ (and is not on $X$), so it intersects exactly four of the lines on $Q_2\cap X$.
\end{example}

\subsection{Progress towards Conjecture~\ref{DoubleConj}}\label{Progress}
We now prove Theorem 
\ref{ProgressConj}.
The proof will mainly use the combinatorics of the intersection matrix between the two skew sets of size $2d$ and the properties of quadrics. 

 \begin{lemma} \label{boxlemma}
Let  $\mathcal{A} \cup \mathcal{B}$ be a double $2d$ on a smooth surface $X$ of degree $d \geq  5$.  If  $\mathcal{A}$ contains three lines  $A_1$, $A_2$, $A_3$ and $\mathcal{B}$ contains five lines that all meet each $A_i$ for $i=1, 2$ and $3$, then the double $2d$ is the union of two unique quadric configurations.
\end{lemma}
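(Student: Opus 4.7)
The plan is to upgrade the hypothesis into a full quadric configuration and then argue the complementary lines form a second one. First, three skew lines in $\mathbb P^3$ determine a unique smooth quadric, so let $Q$ be the quadric through $A_1,A_2,A_3$. Each of the five lines in $\mathcal B$ meeting all three $A_i$'s meets $Q$ in at least three points and hence lies on $Q$. Set $a=|\mathcal A\cap Q|\ge 3$ and $b=|\mathcal B\cap Q|\ge 5$; since $X\cap Q$ is a curve of bidegree $(d,d)$ on $Q$, both $a$ and $b$ are at most $d$. I claim $a=b=d$, so that $X\cap Q$ already consists of the $2d$ lines in $(\mathcal A\cup\mathcal B)\cap Q$, forming the first quadric configuration on $X$.

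The key estimate comes from the fact that any line of $\mathcal A$ not on $Q$ meets $Q$ in at most two points, so meets at most two lines of $\mathcal B\cap Q$. Each line of $\mathcal B\cap Q$ meets exactly $d+2$ lines of $\mathcal A$, of which $a$ lie on $Q$, so double counting incidences between $\mathcal B\cap Q$ and $\mathcal A\setminus Q$ yields $b(d+2-a)\le 2(2d-a)$; symmetrically $a(d+2-b)\le 2(2d-b)$. Writing $u:=d+2-a$ and $v:=d+2-b$, with $2\le u\le d-1$ and $2\le v\le d-3$, these become
\[
u(d-v)\le 2(d-2)\quad\text{and}\quad v(d-u)\le 2(d-2).
\]
Eliminating $u$ gives $v(d-v)\le 2(d-2)$, which factors as $(v-2)(v-(d-2))\ge 0$, so either $v\le 2$ or $v\ge d-2$. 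Since $v\le d-3$ (from $b\ge 5$) and $v\ge 2$, the only possibility is $v=2$; substituting back forces $u=2$ as well, so $a=b=d$.

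With $X\cap Q$ identified as the first quadric configuration, the remaining sets $\mathcal A':=\mathcal A\setminus Q$ and $\mathcal B':=\mathcal B\setminus Q$ each have size $d$. Each line of $\mathcal A'$ meets at most two lines of $\mathcal B\cap Q$, hence at least $d$ of the $d$ lines in $\mathcal B'$, i.e.\ every line of $\mathcal B'$. Three skew lines of $\mathcal A'$ span a unique quadric $Q'$; each line of $\mathcal B'$ meets these three and so lies on $Q'$, and then each remaining line of $\mathcal A'$ meets all $d\ge 3$ lines of $\mathcal B'\subset Q'$ and so lies on $Q'$ too. Thus $Q'\cap X=\mathcal A'\cup\mathcal B'$ is the second quadric configuration.

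For the uniqueness clause, suppose $\{Q,Q'\}\ne\{\tilde Q,\tilde Q'\}$ are two such decompositions. The $d$ lines of $\mathcal A\cap Q$ partition between $\tilde Q$ and $\tilde Q'$, and each part contains at most two lines---otherwise three skew lines would force $Q$ to coincide with $\tilde Q$ or $\tilde Q'$. This gives $d\le 4$, contradicting $d\ge 5$. The main obstacle is the quadratic squeeze in the second paragraph: the inequality $(v-2)(v-(d-2))\ge 0$ has two widely separated roots, and only the hypothesis $b\ge 5$ (equivalently $v\le d-3$) combined with $d\ge 5$ rules out the spurious branch $v\ge d-2$; the remaining steps are formal bookkeeping.
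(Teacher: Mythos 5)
Your proof is correct and follows essentially the same strategy as the paper's: take the unique quadric $Q$ through $A_1,A_2,A_3$, note the five lines of $\mathcal B$ land on $Q$, double-count incidences between lines on and off $Q$ to get a quadratic inequality whose two roots force the number of lines of each family on $Q$ to be exactly $d$, and then observe that each leftover line of $\mathcal B$ must meet all $d$ leftover lines of $\mathcal A$, giving the second configuration. The only differences are cosmetic --- your symmetric bookkeeping in $(a,b)$ replaces the paper's two-stage argument (first $t\ge 5$, then the factorization $(k-d)(k-4)\ge 0$), your ``eliminating $u$'' step is valid via the case split $u\le v$ versus $u\ge v$, and you make the uniqueness clause explicit (modulo the easy observation that if, say, $\tilde Q=Q$ then the complementary configurations also coincide), which the paper leaves implicit.
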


\begin{proof}
Let $Q$ be the unique smooth quadric containing the three skew lines $A_1$, $A_2$, $A_3$.  Let 
$B_1$, $B_2$, $B_3$, $B_4$, and $B_5 \in \mathcal{B}$ be the five lines meeting each of $A_1$, $A_2$, $A_3$.  Since each $B_i$ meets $Q$ in three points---namely $B_i\cap A_1, B_i\cap A_2, $ and $B_i\cap A_3$---$B_i$  lies on  $Q$ for $i=1, 2, \dots, 5$.

Label the lines in $\mathcal A$ so that $A_i$ lies on $Q$ if and only if  $i\leq t$. 
We first show that $t\geq 5$.
For any $A\in \mathcal A$, note that $A$ meets all five $\{B_1, \dots, B_5\}$ if $A$ lies on $Q$ and at most two of $\{B_1, \dots, B_5\}$ if $A$ is not on $Q$. So 
$$\sum_{i = 1}^{2d} A_i\cdot\left(\sum_{j=1}^5 B_j\right)\leq  5t+2(2d-t) = 3t+4d. $$
On the other hand, each $B_j$ must intersect exactly $d+2$ lines in $\mathcal A$, so 
$$\sum_{i = 1}^{2d} A_i\cdot\left(\sum_{j=1}^5 B_j\right)= 5(d+2) = 5d+10.$$
Thus, $3t+4d \geq 5d+10$ so $t \geq 3+\frac{d+1}{3}$.
Since $d\geq 5$, we get $t\geq 5$.
Thus, the double $2d$ must contain at least five lines of  each  ruling of $Q$.

Next we show that in fact each set $\mathcal A$ and $\mathcal B$ contains $d$ lines on $Q$.
If not, let $k$ be maximal such that $A_1, \ldots, A_k$ and  $B_1, \ldots, B_k$ lie on $Q$, 
and assume without loss of generality that $A_{k+1}\not\subset Q$.
Since each $B_j$ intersects exactly $d+2$ lines in $\mathcal A$,\  $k$ of which are $A_1, \ldots, A_{k}$, we have that 
\begin{equation}\label{EQ1}
\sum_{j=1}^k B_j \cdot \left(\sum_{i=k+1}^{2d}  A_i \right) = k(d+2)-k^2 = k(d+2-k).
\end{equation}
 Since $A_i$ lies on $Q$ if and only if $i\leq k$, each line $A_j$ for $j>k$ can meet at most two of 
 $\{B_1,\dots, B_k\}$. So 
\begin{equation}\label{EQ2}
\sum_{j=1}^k B_j \cdot \left(\sum_{i=k+1}^{2d}  A_i \right) \leq 2(2d-k) = 4d-2k.
\end{equation}
Comparing (\ref{EQ1}) and (\ref{EQ2}), we have 
 $(4d-2k) - k(d+2-k) =  (k-d)(k-4) \geq  0$, which (since we've shown $k\geq 5$)  is  a contradiction unless $k\geq d$. 
We conclude that 
$\mathcal L = \{A_1, \dots, A_d\}$ and $\mathcal M = \{B_1, \dots, B_d\}$  lie on the same quadric $Q$. That is,  half the lines of $\mathcal A$, together with half the lines of $\mathcal B$,  form a quadric configuration 
$\mathcal L \cup \mathcal M$
on $X$.

It suffices to show that the remaining halves of $\mathcal A$ and $\mathcal B$  also form a quadric configuration.
 Now fix  $B \in \mathcal{B} \setminus \mathcal{M}$. Because $B$ intersects at most two of the lines of $\mathcal L$,  we know $B$  intersects every line in $\mathcal{A} \setminus \mathcal{L}$. So each line of the
skew set $\mathcal{B} \setminus \mathcal{M}$ meets every line of the skew set 
 $\mathcal{A} \setminus \mathcal{L}$---that is $\mathcal{A} \setminus \mathcal{L}$ and $ \mathcal{B} \setminus \mathcal{M}$ form a quadric configuration, as desired.
 \end{proof}
 
 \begin{remark}\label{sublemma} Even if $d$ is three or four, the final paragraph of the proof of Lemma~\ref{boxlemma} shows that  if half the lines of a double $2d$  lie on some quadric $Q$, then the complementary half lies on some other quadric $Q'$. The quadrics $\{Q, Q'\}$ may not be the only quadrics determining the double $2d$ in this case, however. See Remark~\ref{NotUnique} and Remark~\ref{double2d-not-star-line}. \end{remark}

We are now ready to prove Theorem~\ref{ProgressConj}.

\begin{proof}[Proof of Theorem~\ref{ProgressConj}]
Let $X$ be a  smooth surface of degree $d$. The case where $d=3$ is dealt with in \S~\ref{Cubics}.  We 
next handle the case $d \geq 11$.

Let  $\mathcal{A}:= \{A_1,\ldots,A_{2d}\}$ and $\mathcal{B}:= \{B_1,\ldots,B_{2d}\}$ denote the two skew sets of the double $2d$.
By Lemma~\ref{boxlemma}, it suffices to 
show that there are three skew lines in $\mathcal A$ all intersecting each of five skew lines in $\mathcal B$.
Let $M$ denote the intersection matrix $M_{ij} = A_i\cdot B_j$.
By definition of a double $2d$, $M$ has exactly $d+2$ ones and exactly $d-2$ zeros in every row and column.

  For any subset $S \subset \mathcal{A}$, let
\[
\mathrm{IntersectionSet}(S,\mathcal{B}) := \{B_i \in \mathcal{B} \mid B_i \cdot A_j = 1 \text{ for all } A_j \in S\}.
\]
We want to show that there exists some $A_i,A_j,A_k$ such that $|\mathrm{IntersectionSet}(\{A_i,A_j,A_k\}, \mathcal{B})| \geq 5$.  
After a possible relabeling of the $B_i$, we may assume
\[
\mathrm{IntersectionSet}(A_1, \mathcal{B}) = \{B_1,\dots B_{d+2}\}.
\]
Let \[k := \max_{2\leq i\leq 2d}\{|\mathrm{IntersectionSet}(\{A_1,A_i\}, \mathcal{B})|\}.\]
 Then by assumption,  the number of ones in rows $2, \dots, 2d$ and columns $1, \dots, d+2$ of $M$ is at most $k(2d-1)$ since there are at most $k$ ones in each of these rows.  However, by looking at columns, we see that there are exactly $(d+1)(d+2)$ ones in this submatrix of $M$.  Thus, we see
 \[
\frac{(d+1)(d+2)}{(2d-1)} \leq k
 \]
 and since we may assume $k$ is an integer, we have
\[
\left \lceil \frac{(d+1)(d+2)}{(2d-1)} \right \rceil \leq k.
\]
By relabelling $A_2, \dots, A_{2d}$, we may assume $|\mathrm{IntersectionSet}(\{A_1,A_2\},\mathcal{B})| = k$.  By relabelling $B_1, \dots, B_{d+2}$, we may assume $\mathrm{IntersectionSet}(\{A_1,A_2\},\mathcal{B}) = \{B_1,\dots, B_k\}$.  Now note that $M$ has exactly $kd$ ones in columns $1,\dots, k$ and rows $3, \dots, 2d$.

Let \[\ell := \max_{3\leq i\leq 2d}\{|\mathrm{IntersectionSet}(\{A_1,A_2,A_i\},\mathcal{B})|\}.\]

Then the number of ones in columns $1,\dots, k$ and rows $3, \dots, 2d$ is at most $\ell(2d-2)$, so we have
\[
\ell \geq \frac{kd}{2d-2} \geq \left \lceil \frac{(d+1)(d+2)}{(2d-1)} \right \rceil \frac{d}{2d-2}
\]
and since $\ell$ is an integer, we have
\begin{equation}\label{ellbound}
\ell \geq \left\lceil \left \lceil \frac{(d+1)(d+2)}{(2d-1)} \right \rceil \frac{d}{2d-2} \right\rceil
\end{equation}
From (\ref{ellbound}), it follows that when $d \geq 11$, we have $\ell \geq 5$, as desired.  

Finally, when $d= 4$, formula~\eqref{ellbound}  implies that  $\ell \geq 4$, so that there exists a set of three skew lines $A_1$, $A_2$, and $A_3 \in \mathcal{A}$ that all intersect four skew lines $B_1$, $B_2$, $B_3$, and $B_4 \in \mathcal{B}$.
In particular, each of the four  $B_i$ must lie on the unique quadric $Q$ determined by $A_1$, $A_2$, and $A_3$, since they each intersect this quadric in $3$ points.  We claim one more line in $\mathcal A$ lies on $Q$, in which case it follows that every double eight is the union of two quadric configurations (Remark~\ref{sublemma}). To verify the claim, observe that if no $A_i$ lies on $Q$ for $i>3$, then these
 $A_i$ intersect at most two of $B_1$, $B_2$, $B_3$, and $B_4 \in \mathcal{B}$. Thus 
$$
\sum_{i=1}^4 B_i \cdot \sum_{A_i\in \mathcal A} A_i \ =  \ \sum_{i=1}^4 B_i \cdot  \sum_{i=1}^3 A_i \  + \ \sum_{i=1}^4 B_i  \cdot \sum_{i=4}^8 A_i  \ \leq \ 12 + 10 \  =  \ 22,
$$ 
contrary to  the fact that  $\sum_{i=1}^4 B_i \cdot \sum_{A_i\in \mathcal A} A_i = 24,$ 
 since each line in $\mathcal B$ intersects exactly six lines in $\mathcal A$.

\end{proof}





\bibliographystyle{amsalpha}
\bibliography{bibdatabase}

\newcommand{\etalchar}[1]{$^{#1}$}
\providecommand{\bysame}{\leavevmode\hbox to3em{\hrulefill}\thinspace}
\providecommand{\MR}{\relax\ifhmode\unskip\space\fi MR }
\providecommand{\MRhref}[2]{%
  \href{http://www.ams.org/mathscinet-getitem?mr=#1}{#2}
}
\providecommand{\href}[2]{#2}
\begin{thebibliography}{KKP{\etalchar{+}}21b}

\bibitem[BC66]{BC}
R.~C. Bose and I.~M. Chakravarti, \emph{Hermitian varieties in a finite
  projective space {PG}(n, $q^2$)}, Canad. J. Math. \textbf{18} (1966),
  1161--1182.

\bibitem[BD12]{Bamberg+Durante}
John Bamberg and Nicola Durante, \emph{Low dimensional models of the finite
  split {C}ayley hexagon}, Theory and applications of finite fields, Contemp.
  Math., vol. 579, Amer. Math. Soc., Providence, RI, 2012, pp.~1--19.
  \MR{2964273}

\bibitem[Bea90]{beauville}
A.~Beauville, \emph{Sur les hypersurfaces dont les sections hyperplanes sont
  \`a module constant}, The {G}rothendieck {F}estschrift, {V}ol. {I}, Progr.
  Math., vol.~86, Birkh\"{a}user Boston, Boston, MA, 1990, With an appendix by
  David Eisenbud and Craig Huneke, pp.~121--133. \MR{1086884}

\bibitem[BFS13]{benito+faber+smith.measuring_sing_with_frob}
A.~Benito, E.~Faber, and K.~E. Smith, \emph{Measuring singularities with
  {F}robenius: The basics}, Commutative Algebra---{E}xpository papers dedicated
  to {D}avid {E}isenbud on the occasion of his 65th birthday (I.~Peeva, ed.),
  Springer, 2013, pp.~57--97.

\bibitem[BMS08]{blickle+mustata+smith.discr_rat_FPTs}
M.~Blickle, M.~Musta{\c{t}}\u{a}, and K.~E. Smith, \emph{Discreteness and
  rationality of {$F$}-thresholds}, Michigan Math.\ J. \textbf{57} (2008),
  43--61.

\bibitem[BR20]{Bauer+Rams}
Thomas Bauer and S{\l}awomir Rams, \emph{Counting lines on projective
  surfaces}, preprint,
  \href{https://arxiv.org/abs/1902.05133}{arXiv:1902.05133}, 2020.

\bibitem[BS07]{Boissiere+Sarti}
Samuel Boissi\`ere and Alessandra Sarti, \emph{Counting lines on surfaces},
  Ann. Sc. Norm. Super. Pisa Cl. Sci. (5) \textbf{6} (2007), no.~1, 39--52.
  \MR{2341513}

\bibitem[CC10]{Cools+Coppens}
Filip Cools and Marc Coppens, \emph{Star points on smooth hypersurfaces}, J.
  Algebra \textbf{323} (2010), no.~1, 261--286. \MR{2564838}

\bibitem[DD19]{dolgachev-duncan.automorphisms}
I.~Dolgachev and A.~Duncan, \emph{Automorphisms of cubic surfaces in positive
  characteristic}, Izv. Ross. Akad. Nauk Ser. Mat. \textbf{83} (2019), no.~3,
  15--92. \MR{3954305}

\bibitem[Dic58]{Dickson}
Leonard~Eugene Dickson, \emph{Linear groups: {W}ith an exposition of the
  {G}alois field theory}, Dover Publications, Inc., New York, 1958, With an
  introduction by W. Magnus. \MR{0104735}

\bibitem[ES16]{Etzion+Storme}
T.~Etzion and L.~Storme, \emph{Galois geometries and coding theory}, Des. Codes
  Cryptogr. \textbf{78} (2016), no.~1, 311--350. \MR{3440233}

\bibitem[FK83]{Faina+Korchmaros}
G.~Faina and G.~Korchm\'{a}ros, \emph{A graphic characterization of {H}ermitian
  curves}, Combinatorics '81 ({R}ome, 1981), Ann. Discrete Math., vol.~18,
  North-Holland, Amsterdam-New York, 1983, pp.~335--342. \MR{695821}

\bibitem[Gop83]{Goppa}
V~D Goppa, \emph{Algebraico-geometric codes}, Mathematics of the
  {USSR}-Izvestiya \textbf{21} (1983), no.~1, 75--91.

\bibitem[Gro02]{Grove}
Larry~C. Grove, \emph{Classical groups and geometric algebra}, Graduate Studies
  in Mathematics, vol.~39, American Mathematical Society, Providence, RI, 2002.
  \MR{1859189}

\bibitem[Har95]{Harris}
Joe Harris, \emph{Algebraic geometry}, Graduate Texts in Mathematics, vol. 133,
  Springer-Verlag, New York, 1995, A first course, Corrected reprint of the
  1992 original. \MR{1416564}

\bibitem[Har98]{hara.rational-singularities}
Nobuo Hara, \emph{A characterization of rational singularities in terms of
  injectivity of {F}robenius maps}, Amer. J. Math. \textbf{120} (1998), no.~5,
  981--996. \MR{1646049}

\bibitem[Hir85]{Hirschfeld}
J.~W.~P. Hirschfeld, \emph{Finite projective spaces of three dimensions},
  Oxford Mathematical Monographs, The Clarendon Press, Oxford University Press,
  New York, 1985, Oxford Science Publications. \MR{840877}

\bibitem[HK16]{HommaKim}
M.~Homma and S.~J. Kim, \emph{The characterization of {H}ermitian surfaces by
  the number of points}, J. Geom. \textbf{107} (2016), no.~3, 509--521.

\bibitem[HKT08]{Hirschfeld+Korchmaros+Torres}
J.~W.~P. Hirschfeld, G.~Korchm\'{a}ros, and F.~Torres, \emph{Algebraic curves
  over a finite field}, Princeton Series in Applied Mathematics, Princeton
  University Press, Princeton, NJ, 2008. \MR{2386879}

\bibitem[Hom97]{homma}
M.~Homma, \emph{A combinatorial characterization of the {F}ermat cubic surface
  in characteristic {$2$}}, Geom. Dedicata \textbf{64} (1997), no.~3, 311--318.
  \MR{1440564}

\bibitem[HT15]{Hirschfeld+Thas.15}
J.~W.~P. Hirschfeld and J.~A. Thas, \emph{Open problems in finite projective
  spaces}, Finite Fields Appl. \textbf{32} (2015), 44--81. \MR{3293405}

\bibitem[HT16]{Hirschfeld+Thas.16}
\bysame, \emph{General {G}alois geometries}, Springer Monographs in
  Mathematics, Springer, London, 2016. \MR{3445888}

\bibitem[HY03]{HaraYoshida}
N.~Hara and {K.-i.} Yoshida, \emph{A generalization of tight closure and
  multiplier ideals}, Trans.\ Amer.\ Math.\ Soc. \textbf{355} (2003), no.~8,
  3143--3174.

\bibitem[KKP{\etalchar{+}}21a]{extremal}
Z.~Kadyrsizova, J.~Kenkel, J.~Page, J.~Singh, K.~E. Smith, A.~Vraciu, and E.~E.
  Witt, \emph{Extremal singularities in positive characteristic}, preprint,
  \href{https://arxiv.org/abs/2009.13679}{arXiv:2009.13679}, 2021.

\bibitem[KKP{\etalchar{+}}21b]{cubics}
Zhibek Kadyrsizova, Jennifer Kenkel, Janet Page, Jyoti Singh, Karen~E. Smith,
  Adela Vraciu, and Emily~E. Witt, \emph{Cubic surfaces of characteristic two},
  Trans. Amer. Math. Soc. \textbf{374} (2021), no.~9, 6251--6267. \MR{4302160}

\bibitem[Kol15]{Kollar15}
J.~Koll\'{a}r, \emph{Szemer\'{e}di--{T}rotter-type theorems in dimension 3},
  Adv. Math. \textbf{271} (2015), 30--61.

\bibitem[KPS{\etalchar{+}}21]{WICA}
Z.~Kadyrsizova, J.~Page, J.~Singh, K.~E. Smith, A.~Vraciu, and E.~E. Witt,
  \emph{Classification of frobenius forms in dimension five}, to appear in the
  Proceedings for the 2019 Women in Commutative Algebra Workshop at the Banff
  International Research Station (2021).

\bibitem[KS11]{Klein+Storme}
A.~Klein and L.~Storme, \emph{Applications of finite geometry in coding theory
  and cryptography}, Information security, coding theory and related
  combinatorics, NATO Sci. Peace Secur. Ser. D Inf. Commun. Secur., vol.~29,
  IOS, Amsterdam, 2011, pp.~38--58. \MR{2963125}

\bibitem[Mas10]{Masini}
Tiziana Masini, \emph{A combinatorial characterization of the {H}ermitian
  surface}, Australas. J. Combin. \textbf{46} (2010), 101--107. \MR{2598696}

\bibitem[MTW05]{mustata+takagi+watanabe.F-thresholds}
M.~Musta{\c{t}}\u{a}, S.~Takagi, and {K.-i.} Watanabe, \emph{{F}-thresholds and
  {B}ernstein--{S}ato polynomials}, European {C}ongress of {M}athematics
  (Z{\"u}rich), Eur.\ Math.\ Soc., 2005, pp.~341--364.

\bibitem[Nag60]{CubicBlowUp}
Masayoshi Nagata, \emph{On rational surfaces. {I}. {I}rreducible curves of
  arithmetic genus {$0$} or {$1$}}, Mem. Coll. Sci. Univ. Kyoto Ser. A. Math.
  \textbf{32} (1960), 351--370. \MR{126443}

\bibitem[PT09]{Payne+Thas.09}
Stanley~E. Payne and Joseph~A. Thas, \emph{Finite generalized quadrangles},
  second ed., EMS Series of Lectures in Mathematics, European Mathematical
  Society (EMS), Z\"{u}rich, 2009. \MR{2508121}

\bibitem[RS15a]{Rams+Schutt.15-112lines}
S{\l}awomir Rams and Matthias Sch\"{u}tt, \emph{112 lines on smooth quartic
  surfaces (characteristic 3)}, Q. J. Math. \textbf{66} (2015), no.~3,
  941--951. \MR{3396099}

\bibitem[RS15b]{Rams+Schutt.15-64lines}
\bysame, \emph{64 lines on smooth quartic surfaces}, Math. Ann. \textbf{362}
  (2015), no.~1-2, 679--698. \MR{3343894}

\bibitem[Sch58]{Schlafli}
Ludwig Schl{\"a}fli, \emph{An attempt to determine the twenty-seven lines upon
  a surface of the third order, and to derive such surfaces in species, in
  reference to the reality of the lines upon the surface}, Quarterly Journal of
  Pure and Applied Mathematics \textbf{2} (1858), 55--65,110--120.

\bibitem[Seg43]{Segre.43}
B.~Segre, \emph{The maximum number of lines lying on a quartic surface}, Quart.
  J. Math. Oxford Ser. \textbf{14} (1943), 86--96. \MR{10431}

\bibitem[Seg67]{Segre.67}
Beniamino Segre, \emph{Introduction to {G}alois geometries}, Atti Accad. Naz.
  Lincei Mem. Cl. Sci. Fis. Mat. Natur. Sez. Ia (8) \textbf{8} (1967),
  133--236. \MR{238846}

\bibitem[Sha13]{Shafarevich}
Igor~R. Shafarevich, \emph{Basic algebraic geometry. 1}, third ed., Springer,
  Heidelberg, 2013, Varieties in projective space. \MR{3100243}

\bibitem[Shi88]{Shioda}
Tetsuji Shioda, \emph{Arithmetic and geometry of {F}ermat curves}, Algebraic
  {G}eometry {S}eminar ({S}ingapore, 1987), World Sci. Publishing, Singapore,
  1988, pp.~95--102. \MR{966448}

\bibitem[Tit76]{Tits.76}
J.~Tits, \emph{Non-existence de certains polygones g\'{e}n\'{e}ralis\'{e}s.
  {I}}, Invent. Math. \textbf{36} (1976), 275--284. \MR{435248}

\bibitem[Tit79]{Tits.79}
\bysame, \emph{Non-existence de certains polygones g\'{e}n\'{e}ralis\'{e}s.
  {II}}, Invent. Math. \textbf{51} (1979), no.~3, 267--269. \MR{530633}

\bibitem[TVN07]{Tsfasman+Vladut+Nogin}
Michael Tsfasman, Serge Vl\u{a}du\c{t}, and Dmitry Nogin, \emph{Algebraic
  geometric codes: basic notions}, Mathematical Surveys and Monographs, vol.
  139, American Mathematical Society, Providence, RI, 2007. \MR{2339649}

\bibitem[TVZ82]{Tsfasman+Vladut+Zink}
M.~A. Tsfasman, S.~G. Vl\u{a}du\c{t}, and Th. Zink, \emph{Modular curves,
  {S}himura curves, and {G}oppa codes, better than {V}arshamov-{G}ilbert
  bound}, Math. Nachr. \textbf{109} (1982), 21--28. \MR{705893}

\bibitem[TW04]{takagi+watanabe.F-pure_thresholds}
S.~Takagi and {K.-i.} Watanabe, \emph{On {F}-pure thresholds}, J.\ Algebra
  \textbf{282} (2004), no.~1, 278--297.

\bibitem[vM98]{VanMaldeghem}
Hendrik van Maldeghem, \emph{Generalized polygons}, Monographs in Mathematics,
  vol.~93, Birkh\"{a}user Verlag, Basel, 1998. \MR{1725957}

\bibitem[Zak93]{Zak}
F.~L. Zak, \emph{Tangents and secants of algebraic varieties}, Translations of
  Mathematical Monographs, vol. 127, American Mathematical Society, Providence,
  RI, 1993, Translated from the Russian manuscript by the author. \MR{1234494}

\end{thebibliography}

\end{document}